\numberwithin{equation}{section}
\newtheorem{theorem}{Theorem}[section]
\newtheorem{proposition}[theorem]{Proposition}
\newtheorem{lemma}[theorem]{Lemma}
\newtheorem{corollary}[theorem]{Corollary}
\theoremstyle{definition}
\newtheorem{definition}[theorem]{Definition}
\newtheorem{example}[theorem]{Example}
\theoremstyle{remark}
\newtheorem{remark}[theorem]{Remark}
\DeclareMathOperator{\N}{\mathbb{N}}
\DeclareMathOperator{\Z}{\mathbb{Z}}
\DeclareMathOperator{\R}{\mathbb{R}}
\DeclareMathOperator{\C}{\mathbb{C}}
\DeclareMathOperator{\T}{\mathbb{T}}
\DeclareMathOperator{\h}{\mathbb{H}}
\DeclareMathOperator{\s}{\mathbb{S}} 
\DeclareMathOperator{\lh}{\ell ^1(\mathbb{H},\mathbb{C})}
\begin{document}
\allowdisplaybreaks\frenchspacing

\baselineskip=1.2\baselineskip

\title[A Wiener Lemma for the discrete Heisenberg group]{A Wiener Lemma for \\the discrete Heisenberg group:\\ 
\Small{Invertibility criteria and applications to algebraic dynamics}}

\author{Martin G\"oll} 
\address{Martin G\"oll: Mathematical Institute, University of Leiden, PO Box 9512, 2300 RA Leiden, The Netherlands} 

\author{Klaus Schmidt}
\address{Klaus Schmidt: Mathematics Institute, University of Vienna, Oskar-Morgenstern-Platz 1, A-1090 Vienna, Austria \newline 
\textup{and} \newline \indent
Erwin Schr\"odinger Institute for Mathematical Physics, Boltzmanngasse~9, A-1090 Vienna, Austria} \email{klaus.schmidt@univie.ac.at}

\author{Evgeny Verbitskiy}
\address{Evgeny Verbitskiy: Mathematical Institute, University of Leiden, PO Box 9512, 2300 RA Leiden, The Netherlands \newline
 \textup{and} \newline \indent
  Johann Bernoulli Institute for Mathematics and Computer Science, University of Groningen, PO Box 407, 9700 AK, Groningen, The Netherlands} \email{e.a.verbitskiy@rug.nl}
  
\keywords{Invertibility, expansiveness, Wiener's Lemma, discrete Heisenberg group}

\subjclass[2010]{Primary: 54H20, 37A45, 22D10; Secondary: 37C85,  47B38}

\thanks{
The authors would like to thank Karlheinz Gr\"ochenig, A.J.E.M. Janssen,  Hanfeng Li and Doug Lind for helpful discussions and insights.
Moreover, we thank Mike Keane for making us aware of the article \cite{2Brown} by Ian Brown. 
\newline \indent MG gratefully acknowledges support by a
Huygens Fellowship from Leiden University. 
\newline\indent
MG and EV would like to thank the Erwin Schr\"{o}dinger
Institute, Vienna, and KS the University of Leiden, for hospitality and support while some of this work was done.
}

\maketitle

\begin{abstract}
This article contains a Wiener Lemma for the convolution algebra $\ell^1(\h,\C)$ and group $C^\ast$-algebra $C^\ast(\h)$ of the discrete Heisenberg group $\h$.

At first, a short review of Wiener's Lemma in its classical form and general results about invertibility in group algebras of nilpotent groups will be presented.
The known  literature on this topic suggests that  invertibility investigations  in the group algebras of $\h$ rely on the complete  knowledge of $\widehat{\h}$ --  the dual of $\h$, i.e.,  the space of unitary equivalence classes of  irreducible unitary representations.
We will describe the dual of ${\h}$ explicitly and discuss its structure.

 Wiener's Lemma provides a convenient condition to verify invertibility in $\ell^1(\h,\C)$ and $C^\ast(\h)$  which bypasses $\widehat{\h}$.
The proof of Wiener's Lemma for $\h$ relies on local principles and can be generalised to countable nilpotent groups. As our analysis shows, the main representation theoretical objects to study invertibility in group algebras of nilpotent groups are the corresponding primitive ideal spaces.
Wiener's Lemma for $\h$ has interesting applications in algebraic dynamics and Time-Frequency Analysis which will be presented  in this article as well.

\end{abstract}

\section{Motivation}
\label{2intro}
Let $\Gamma$ be a countably infinite discrete group.
The aim of this article is to find a verifiable criterion -- a Wiener Lemma -- for invertibility in the group algebra
\begin{equation*}
\ell^1(\Gamma,\C) \coloneqq \biggl \{ (f_\gamma)_{\gamma\in\Gamma}\,:\, \sum_{\gamma\in\Gamma} |f_\gamma | < \infty \biggr \} \,,
\end{equation*}
in particular for the case where $\Gamma$ is the discrete Heisenberg group $\h$.

Our main motivation to study this problem is an application in the field of algebraic dynamics which we introduce first.
An \textit{algebraic $\Gamma$-action} is a homomorphism $\alpha\colon \Gamma \longrightarrow \text{Aut} (X)$ from $\Gamma$ to the group of
automorphisms of a compact metrisable abelian group $X$ \cite{2KS}.

We are especially interested in \textit{principal actions} which are defined as follows.
Let $f$ be an element in the \textit{integer group ring} $\Z[\Gamma]$, i.e., the ring of functions $\Gamma\longrightarrow \Z$ with finite support.
The Pontryagin dual of the discrete abelian group $\Z[\Gamma]/\Z[\Gamma ]f$ will be denoted by $X_f \subseteq \T^{\Gamma}$, where $\T=\mathbb{R}/\mathbb{Z}$ (which will be identified with the unit interval $(0,1]$). Pontryagin's duality theory of locally compact abelian groups tells us that $X_f$ can be identified with the annihilator of the principal left ideal $\Z[\Gamma]f$, i.e.,
\begin{equation}\label{2d:principal}
	X_f = (\Z[\Gamma]f)^\perp =\biggl \{x\in\T^{\Gamma}\,:\, \sum_{\gamma\in\Gamma} f_\gamma x_{\gamma'\gamma} =0 \,\,\text{for every}\,\gamma'\in\Gamma \biggr \}\,.
\end{equation}
The \textit{left shift-action}  $\lambda$ on $\T^\Gamma$ is defined by $(\lambda^\gamma x)_{\gamma '} = x_{\gamma^{-1}\gamma'}$ for every
$x \in \T ^\Gamma$ and $\gamma,\gamma ' \in \Gamma$.
Denote by $\alpha_f$ the restriction of $\lambda$ on $\T^{\Gamma}$ to $X_f$. The pair $(X_f,\alpha_f)$ forms an algebraic dynamical system which we call \textit{principal $\Gamma$-action} -- because it is defined by a principal ideal (cf. \eqref{2d:principal}).

Since a principal $\Gamma$-action $(X_f,\alpha_f)$ is completely determined by an element $f\in \mathbb{Z}[ \Gamma ]$, one should be able to express its dynamical properties in terms of properties of $f$.
Expansiveness is such a dynamical property which allows a nice algebraic interpretation.
Let $(X,\alpha)$ be an algebraic dynamical system and $d$ a translation invariant metric on $X$. The $\Gamma$-action $\alpha$ is \textit{expansive} if  there exists a constant $\varepsilon >0$ such that
\begin{equation*}
 \sup_{\gamma\in\Gamma} d(\alpha^{\gamma} x, \alpha^{\gamma} y) > \varepsilon \,,
\end{equation*}
for all pairs of distinct elements $x,y\in X$.
We know from \cite[Theorem 3.2]{2DS} that $(X_f,\alpha_f)$ is expansive if and only if $f$ is invertible in $\ell^1(\Gamma,\R)$.
This result was proved already in the special cases $\Gamma=\Z^d$ and for groups $\Gamma$ which are nilpotent in \cite{2KS} and in \cite{2ER}, respectively.
Although, this result is a complete characterisation of expansiveness, it is in general  hard  to check whether $f$ is invertible in $\ell^1(\Gamma,\R)$ or not.

\subsection{Outline of the article}
In Section \ref{2section2} we will recall known criteria for invertibility in symmetric unital Banach algebras $\mathcal A$.
The most important result links invertibility investigations in $\mathcal A$ to the representation theory of $\mathcal A$.
More precisely, the existence of an inverse $a^{-1}$ of $a \in\mathcal A$ is equivalent to the invertibility of the operators $\pi(a)$   for every irreducible unitary representation $\pi$  of $\mathcal A$.
The representation theory of $\h$ is unmanageable as we will demonstrate in Section \ref{2section3}.

Theorem \ref{2t:main} -- Wiener's Lemma for the discrete Heisenberg group -- is the main result of this paper and allows one to restrict the attention to certain `nice' and canonical irreducible representations for questions concerning invertibility in the group algebra of the discrete Heisenberg group $\h$.
The proof of Theorem \ref{2t:main} can be found in Section \ref{2section4}. Moreover, as will be shown in Section \ref{2section4} as well,
invertibility of $f\in\Z[\h]$ in $\ell^1(\h,\R)$ can be verified with the help of the finite-dimensional irreducible unitary representations of $\h$.

In Section \ref{2section5}
we generalise Theorem \ref{2t:main} to countable discrete nilpotent groups $\Gamma$. This result says that an element $a$ in $C^\ast(\Gamma)$ is invertible if and only if for every primitive ideal $\mathtt I$ of $C^\ast(\Gamma)$
  the projection of $a$ onto the quotient space
$C^\ast(\Gamma)/\mathtt I$ is invertible. As we will see, the primitive ideal space is more accessible than the space of irreducible representations and easy to determine. Moreover, this Wiener Lemma for nilpotent groups can be converted to a statement about invertibility of evaluations of irreducible monomial representations.

 In Section \ref{2section6} we will explore a connection to Time-Frequency Analysis. Allan's local principle (cf. Section \ref{2section4}) directly links localisations of $\lh$ to twisted convolution algebras and hence,  the representations of $\h$
 and the relevant representation theory in the field of Time-Frequency Analysis coincide. In order to highlight this connection even more,
 Time-Frequency Analysis might be interpreted as the Fourier theory on the discrete Heisenberg group $\h$; due to the striking similarities to the Fourier Analysis of the additive group $\Z$ and its group algebras.
  Moreover, we give an alternative proof of Wiener's Lemma for  twisted convolution algebras, which  only uses  the representation theory of $\h$.  Theorem \ref{2stonevneumannspectrum} -- which is based on a result of Linnell (cf. \cite{2L}) -- gives a full description of the spectrum of the operators $\pi (f)$ acting on $L^2(\R,\C)$, where $\pi$ is a Stone-von Neumann representation (cf. (\ref{2def:stonevneu}) for a definition) and $f\in\Z[\h]$.

Section \ref{2section7} contains applications of  Theorem \ref{2t:main} and Wiener's Lemma for twisted convolution algebras, in particular,
 conditions for non-invertibility for `linear' elements in $f\in\Z[\h]$.

\section{Invertibility in group algebras and Wiener's Lemma: A review}\label{2section2}

In this section we review known conditions for invertibility in group algebras of nilpotent groups $\Gamma$.
First of all we refer to the article \cite{2Gro} by K. Gr\"ochenig for a modern survey of Wiener's Lemma and its variations. Gr\"ochenig's survey focuses on two main topics, namely on invertibility of convolution operators on $\ell^p$-spaces (cf. Subsection \ref{subs:sym} and in particular Theorem \ref{2t:barnes}) and inverse-closedness. Moreover, Gr\"ochenig  explains how these topics are related to questions on invertibility in Time-Frequency analysis and invertibility in group algebras.
Although, Wiener's Lemma for convolution operators is stated here as well it will play an insignificant role in the rest of the paper. However, we would like to bring the reader's attention to Theorem \ref{2t:den-schmidt} which is yet another result which relates invertibility in $\ell^1(\Gamma,\C)$ to invertibility of convolution operators. This result is \emph{completely} independent of Theorem \ref{2t:barnes} and holds in much greater generality.

In this review we will explain why a detailed understanding of the space of irreducible representations of a nilpotent group $\Gamma$ is of importance for invertibility investigations in the group algebras of $\Gamma$. Furthermore, we will present Gelfand's results on invertibility in commutative Banach algebras in the form of local principles; which will be discussed in greater detail in later sections of this article.

We start the discussion with Wiener's Lemma in its classical form.
Let us denote by $\mathcal A (\T)$ the Banach algebra of functions with absolutely convergent Fourier series on $\T$.
	\begin{theorem}[Wiener's Lemma]
	An element $F\in \mathcal A (\T)$ is invertible, i.e. $1/F \in \mathcal A (\T)$, if and only if $F(s)\not = 0$ for all $s \in \T$.
	\end{theorem}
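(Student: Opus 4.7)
The easy direction is immediate: if $F$ is invertible with inverse $G \in \mathcal{A}(\mathbb{T})$, then $F(s)G(s)=1$ for every $s \in \mathbb{T}$, so $F$ cannot vanish on $\mathbb{T}$.

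For the nontrivial direction, the plan is to invoke Gelfand's theory of commutative unital Banach algebras, which fits naturally with the review-style spirit of this section. First I would observe that $\mathcal{A}(\mathbb{T})$ is isometrically isomorphic to $\ell^1(\mathbb{Z},\mathbb{C})$ equipped with convolution, via the Fourier coefficient map $F \mapsto (\widehat{F}(n))_{n \in \mathbb{Z}}$; under this identification $\mathcal{A}(\mathbb{T})$ is a commutative unital Banach algebra. By Gelfand's theorem, an element $F$ of such an algebra is invertible if and only if $\chi(F) \neq 0$ for every character $\chi$ of $\mathcal{A}(\mathbb{T})$.

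The key step is then to identify the Gelfand (maximal-ideal) space $\Delta(\mathcal{A}(\mathbb{T}))$ with $\mathbb{T}$ itself, in such a way that the character associated with $s \in \mathbb{T}$ is precisely the point evaluation $F \mapsto F(s)$. Writing $\zeta(s) = e^{2\pi i s}$, I would argue as follows: any character $\chi$ on a unital Banach algebra has norm $1$, so $|\chi(\zeta)| \leq \|\zeta\|_{\mathcal{A}} = 1$ and $|\chi(\zeta)|^{-1} = |\chi(\zeta^{-1})| \leq \|\zeta^{-1}\|_{\mathcal{A}} = 1$. This forces $\chi(\zeta) = e^{2\pi i s_\chi}$ for some $s_\chi \in \mathbb{T}$. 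Multiplicativity then gives $\chi(\zeta^n) = e^{2\pi i n s_\chi}$ for every $n \in \mathbb{Z}$, so $\chi$ agrees with evaluation at $s_\chi$ on all trigonometric polynomials. Since these are dense in $\mathcal{A}(\mathbb{T})$ and both $\chi$ and point evaluation are continuous functionals of norm at most $1$, we obtain $\chi(F) = F(s_\chi)$ for all $F \in \mathcal{A}(\mathbb{T})$. Combined with the Gelfand criterion, this yields the theorem.

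The main obstacle is precisely the identification of $\Delta(\mathcal{A}(\mathbb{T}))$ with $\mathbb{T}$: one has to be careful about the continuity-and-density step and about invoking the automatic boundedness $\|\chi\| = 1$ of a character on a unital Banach algebra. A completely different route would be Wiener's original localization argument: near each point $s_0$ with $F(s_0) \neq 0$, a Neumann-series construction produces a local inverse of $F$ in $\mathcal{A}(\mathbb{T})$, and these local inverses are patched together using compactness of $\mathbb{T}$ and a partition of unity inside $\mathcal{A}(\mathbb{T})$. The Gelfand proof is shorter and, more importantly, its spirit is what motivates the noncommutative generalisations pursued later in the paper.
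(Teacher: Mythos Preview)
Your proof is correct. The paper, however, does not actually prove this theorem: it is stated as the classical starting point of the review in Section~\ref{2section2}, with no argument supplied. So there is no ``paper's own proof'' to compare against in the strict sense.

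That said, your approach is precisely the one the paper implicitly endorses. Immediately after stating Wiener's Lemma the authors introduce Gelfand's theory (Theorem~\ref{2Gelfand}) and later remark explicitly that evaluating the Fourier transform of $f\in\ell^1(\mathbb{Z},\mathbb{C})$ at $\theta$ is the same as applying the one-dimensional irreducible representation $\pi_\theta\colon n\mapsto e^{2\pi i n\theta}$ to $f$. Your identification of the Gelfand spectrum of $\mathcal{A}(\mathbb{T})\cong\ell^1(\mathbb{Z},\mathbb{C})$ with $\mathbb{T}$ via the generator $\zeta$ is exactly this observation, and the density-plus-continuity step you flag is handled correctly. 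The alternative localisation route you mention (Wiener's original argument) is also valid but, as you note, less aligned with the representation-theoretic viewpoint that drives the rest of the paper.
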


Before we start our review of more general results let us mention the concept of \textit{inverse-closedness} which originates from Wiener's Lemma as well. The convolution algebra
$\ell^1(\Z,\C)$ is isomorphic to $\mathcal A (\T)$ and hence $\ell^1(\Z,\C)$ can be embedded in the larger Banach algebra of continuous functions $C(\T,\C)$ in a natural way. The fact that
$F\in\mathcal A (\T)$ is invertible in $\mathcal A (\T)$ if and only if $F$ is invertible in  $C(\T,\C)$ leads to the question:
\textit{for which pairs of nested unital Banach algebras $\mathcal A ,\mathcal B$ with $\mathcal A \subseteq \mathcal B$ and with the same multiplicative identity element does the following implication hold:}
	\begin{equation}\label{2wienerpair}
	a\in \mathcal A \quad\text{and}\quad a^{-1}\in \mathcal B \,\,\Longrightarrow \,\, a^{-1}\in \mathcal A\,.
	\end{equation}
In the literature a pair of Banach algebras which fulfils \eqref{2wienerpair} is called a \textit{Wiener pair}.

Wiener's Lemma was the starting point of Gelfand's study of invertibility in commutative Banach algebras.
Gelfand's theory links the question of invertibility in a commutative Banach algebra $\mathcal A$ to the study of its irreducible representations and the compact space of maximal ideals $\textup{Max} (\mathcal A)$. We collect in the following theorem several criteria for invertibility in unital commutative Banach algebras.

\begin{theorem}[cf. \cite{2Folland}] \label{2Gelfand}
Suppose $\mathcal A$ is a unital commutative Banach algebra. The set of irreducible representations of $\mathcal A$ is  isomorphic to the compact space of maximal ideals $\textup{Max} (\mathcal A)$. Furthermore, the following statements are equivalent
\begin{enumerate}
	\item $a \in \mathcal A$ is invertible;
	\item $a \not \in m$ for all $m \in \textup{Max} (\mathcal A)$;
	\item $\Phi_m(a)$ is invertible in $\mathcal A /m$ for all $m \in \textup{Max} (\mathcal A)$, where $\Phi_m\colon \mathcal A \longrightarrow \mathcal A /m \cong \C$ is the canonical projection map;
	\item $\Phi_m(a) \not = 0$ for all $m \in \textup{Max} (\mathcal A)$;
	\item $\pi(a)v \not = 0$ for every one-dimensional irreducible unitary representation $\pi$ of $\mathcal A$ and $v\in \C \smallsetminus \{ 0\}$ (definitions can be found in Subsection \ref{subs:reptheo}).
\end{enumerate}
\end{theorem}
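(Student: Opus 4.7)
The plan is to establish the asserted bijection between $\textup{Max}(\mathcal{A})$ and the one-dimensional irreducible representations of $\mathcal{A}$, and then to close the cycle of equivalences $(1)\Leftrightarrow(2)\Leftrightarrow(3)\Leftrightarrow(4)\Leftrightarrow(5)$. The backbone of the whole argument is the Gelfand--Mazur theorem: any unital commutative Banach algebra in which every nonzero element is invertible is isomorphic to $\mathbb{C}$.

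First I would settle the correspondence between characters and maximal ideals. By Schur's lemma, any irreducible representation of a commutative algebra is one-dimensional, because each element of $\mathcal{A}$ commutes with the whole image and so must act as a scalar on any irreducible module. A one-dimensional representation is therefore a character $\chi : \mathcal{A} \to \mathbb{C}$, and $\ker\chi$ is a closed codimension-one ideal, hence maximal. Conversely, given $m \in \textup{Max}(\mathcal{A})$, the ideal $m$ is automatically closed, since the closure of any proper ideal is proper (the invertible elements form an open neighbourhood of $\mathbf{1}$). Thus $\mathcal{A}/m$ is a unital commutative Banach algebra, and maximality of $m$ forces every nonzero class to be invertible; Gelfand--Mazur then gives $\mathcal{A}/m \cong \mathbb{C}$, so that the quotient map $\Phi_m$ is a character with kernel $m$. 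Compactness of $\textup{Max}(\mathcal{A})$ in the Gelfand topology follows from Banach--Alaoglu applied to the unit ball of $\mathcal{A}^\ast$, noting that characters have norm at most one.

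With this correspondence in hand the equivalences are routine. For $(1)\Leftrightarrow(2)$: if $a$ is invertible and $a \in m$, then $\mathbf{1} = a^{-1}a \in m$, a contradiction; conversely, if $a$ is non-invertible, then $a\mathcal{A}$ is a proper ideal (it does not contain $\mathbf{1}$), and Zorn's lemma produces a maximal ideal containing it. The chain $(2)\Leftrightarrow(3)\Leftrightarrow(4)$ is immediate from $\mathcal{A}/m \cong \mathbb{C}$: the class $\Phi_m(a)$ is nonzero iff it is invertible in $\mathbb{C}$, iff $a\notin m$. Finally $(4)\Leftrightarrow(5)$ is a direct translation through the character--ideal correspondence, since a one-dimensional operator on $\mathbb{C}$ annihilates some nonzero vector iff it annihilates every nonzero vector iff its scalar value is zero.

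The principal obstacle, and truly the only non-trivial ingredient, is Gelfand--Mazur. Its proof considers the resolvent map $\lambda \mapsto (a - \lambda\mathbf{1})^{-1}$ for an element $a$ in a Banach field; this is an $\mathcal{A}$-valued holomorphic function on the resolvent set that vanishes as $|\lambda|\to\infty$. If the spectrum of $a$ were empty, then Liouville's theorem, applied coordinatewise via continuous linear functionals, would force the resolvent to be identically zero, which is impossible. Hence some $a - \lambda\mathbf{1}$ is non-invertible, and by the Banach-field hypothesis must vanish, so $\mathcal{A} = \mathbb{C}\cdot\mathbf{1}$. Everything else in the theorem -- closedness of maximal ideals, continuity of characters, and compactness of $\textup{Max}(\mathcal{A})$ -- is then standard.
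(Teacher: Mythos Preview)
Your argument is correct and is the standard Gelfand-theory proof that one finds in any reference on commutative Banach algebras. Note, however, that the paper does not prove this theorem at all: it is stated with the attribution ``cf.\ \cite{2Folland}'' and is used as background material, so there is no ``paper's own proof'' to compare against. Your write-up is precisely the kind of argument Folland gives.
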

The main goal of this article is to prove that similar results hold for group algebras of nilpotent groups and, in particular, for the discrete Heisenberg group.

In this article we concentrate on the harmonic analysis of rings associated with a countably infinite group $\Gamma$ furnished with the discrete topology.
Beside $\Z[\Gamma]$ and $\ell^1(\Gamma,\C)$ we are interested in $C^\ast(\Gamma)$, the \textit{group-$C^\ast$-algebra} of $\Gamma$, i.e., the enveloping $C^\ast$-algebra of $\ell^1(\Gamma,\C)$.

Let $\ell ^\infty (\Gamma ,\mathbb{C}) $ be the space of bounded complex-valued maps.
We write a typical element $f \in \ell^\infty(\Gamma,\C)$ as a formal sum $\sum_{\gamma\in\Gamma} f_\gamma \cdot \gamma $, where $f_\gamma= f(\gamma)$. The \textit{involution} $f\mapsto f^\ast$ is defined by $f^\ast=\sum_{\gamma\in\Gamma} \bar{f}_{\gamma^{-1}} \cdot \gamma $.
The product of $f\in \ell^1(\Gamma,\C)$ and $g\in \ell^\infty(\Gamma,\C)$ is given by \textit{convolution}
	\begin{equation}
	\label{2eq:convolution}
fg=\sum_{\gamma ,\gamma '\in\Gamma }f_\gamma g_{\gamma '}\cdot \gamma \gamma ' =\sum_{\gamma ,\gamma '\in\Gamma }f_\gamma g_{\gamma ^{-1}\gamma '}\cdot  \gamma '\,.
	\end{equation}
For $1\le p <\infty $ we set
$$\ell ^p(\Gamma ,\mathbb{C})=\biggl \{f=(f_\gamma )\in\ell ^\infty (\Gamma ,\mathbb{C})
\,\colon \,\|f\|_p=\bigl(\sum_{\gamma \in\Gamma }|f_\gamma |^p\bigr)^{1/p}<\infty \biggr \}\,.$$

\subsection{Representation theory} \label{subs:reptheo}
We recall at this point some relevant definitions and results from representation theory, which will be used later. Moreover, we will state results for symmetric Banach-$^\ast$-algebras which are in the spirit of Wiener's Lemma.
\subsubsection*{Unitary Representations}
Let $\mathcal{H}$ be a complex Hilbert space with inner product $\langle \cdot ,\cdot \rangle $. We denote by $\mathcal{B}(\mathcal{H})$ the algebra of bounded linear operators on $\mathcal{H}$, furnished with the strong operator topology. Further, denote by $\mathcal{U}(\mathcal{H})\subset \mathcal{B}(\mathcal{H})$  the group of unitary operators on $\mathcal{H}$. If $\Gamma $ is a countable group, a \textit{unitary representation} $\pi $ of $\Gamma $ is a homomorphism $\gamma \mapsto \pi (\gamma )$ from $\Gamma $ into $\mathcal{U}(\mathcal{H})$ for some complex Hilbert space $\mathcal{H}$. Every unitary representation $\pi $ of $\Gamma $ extends to a $^\ast$-representation of $\ell ^1(\Gamma ,\mathbb{C})$, which is again denoted by $\pi $, and which is given by the formula $\pi (f)=\sum_{\gamma \in \Gamma }f_\gamma \pi (\gamma )$ for $f=\sum_{\gamma \in \Gamma }f_\gamma \cdot \gamma \in \ell ^1(\Gamma ,\mathbb{C})$. Clearly, $\pi (f^\ast)=\pi (f)^\ast$. The following theorem was probably first published in \cite{2GN} but we refer to \cite[Theorem 12.4.1]{2Palmer}.

\begin{theorem} \label{2t : reps}
Let $\Gamma$ be a discrete group. Then there are bijections between
\begin{itemize}
\item the class of unitary representations of $\Gamma$;
\item the class of non-degenerate\footnote{A representation $\pi$ of a Banach $^\ast$-algebra $\mathcal A$ is called \textit{non-degenerate} if there is no non-zero vector $v\in\mathcal H_\pi$ such that $\pi(a)v=0$ for every $a\in\mathcal A$. } $^\ast$-representations of $\ell^1(\Gamma,\C)$;
\item the class of non-degenerate $^\ast$-representations of $C^\ast(\Gamma)$.
\end{itemize}
Moreover, these bijections respect unitary equivalence and irreducibility.
\end{theorem}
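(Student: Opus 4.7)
The plan is to construct mutually inverse correspondences in each direction and then verify that the correspondences preserve irreducibility and unitary equivalence. The bijection between unitary representations of $\Gamma$ and non-degenerate $*$-representations of $\ell^1(\Gamma,\C)$ is the substantive step; passing to $C^\ast(\Gamma)$ is then a formality from its universal property.

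First I would show that the formula $\pi(f)=\sum_\gamma f_\gamma \pi(\gamma)$ from the excerpt does define a $*$-representation of $\ell^1(\Gamma,\C)$: convergence in operator norm is immediate since $\|\pi(\gamma)\|=1$ and $\|f\|_1<\infty$; multiplicativity on $\ell^1$ follows by a direct reordering using \eqref{2eq:convolution}; and $\pi(f^\ast)=\pi(f)^\ast$ is noted in the excerpt. This extension is automatically non-degenerate, because $\pi(\delta_e)$ is the identity operator on $\mathcal H$, so $\pi(\delta_e)v=0$ forces $v=0$.

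For the inverse direction, given a non-degenerate $*$-representation $\rho\colon \ell^1(\Gamma,\C)\to\mathcal B(\mathcal H)$, set $\pi(\gamma)\coloneqq \rho(\delta_\gamma)$. The key lemma here is that non-degeneracy forces $\rho(\delta_e)=I$: since $\delta_e$ is the multiplicative identity of $\ell^1(\Gamma,\C)$, $\rho(\delta_e)$ is a self-adjoint idempotent, and any $v$ in its kernel satisfies $\rho(f)v=\rho(f\ast\delta_e)v=\rho(f)\rho(\delta_e)v=0$ for every $f$, hence $v=0$. Once $\rho(\delta_e)=I$, the identities $\delta_\gamma\ast\delta_{\gamma'}=\delta_{\gamma\gamma'}$ and $\delta_\gamma^\ast=\delta_{\gamma^{-1}}$ imply that $\gamma\mapsto \pi(\gamma)$ takes values in $\mathcal U(\mathcal H)$ and is a group homomorphism. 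That the two constructions are inverses of one another follows because any $*$-representation of a Banach $*$-algebra is contractive in the natural norm ($\le 1$ here), and the finitely supported elements are dense in $\ell^1$, so the extension $\sum f_\gamma\pi(\gamma)$ recovers $\rho$.

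For the passage to $C^\ast(\Gamma)$ I would invoke the universal property: $C^\ast(\Gamma)$ is by construction the completion of $\ell^1(\Gamma,\C)$ in the $C^\ast$-seminorm $\|f\|_{C^\ast}=\sup_\rho\|\rho(f)\|$, with the supremum over all $*$-representations. Hence each non-degenerate $*$-representation of $\ell^1(\Gamma,\C)$ extends uniquely to a non-degenerate $*$-representation of $C^\ast(\Gamma)$, and conversely every $*$-representation of $C^\ast(\Gamma)$ restricts to one of $\ell^1(\Gamma,\C)$; these two operations are mutually inverse. Preservation of unitary equivalence is immediate from linearity and continuity: any unitary $U$ intertwining $\pi_1(\gamma)$ and $\pi_2(\gamma)$ on the group level also intertwines the extensions $\pi_i(f)$ and $\pi_i(a)$. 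For irreducibility, the commutants of $\pi(\Gamma)$, of $\pi(\ell^1(\Gamma,\C))$ and of $\pi(C^\ast(\Gamma))$ all coincide, because $\pi(\gamma)=\pi(\delta_\gamma)$ and the $\delta_\gamma$ generate a norm-dense subalgebra; by Schur's lemma triviality of the commutant is equivalent to irreducibility in each setting. The only real subtlety in the whole argument is the step that forces $\rho(\delta_e)=I$, without which the putative group representation would only give partial isometries rather than unitaries.
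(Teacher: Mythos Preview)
Your argument is correct and follows the standard route. The one slightly loose phrase is the appeal to ``any $*$-representation of a Banach $*$-algebra is contractive'': this is not true in general, but here you do not need it, since once you have shown $\rho(\delta_\gamma)$ is unitary for every $\gamma$, the bound $\|\rho(f)\|\le\sum_\gamma|f_\gamma|\,\|\rho(\delta_\gamma)\|=\|f\|_1$ follows directly on finitely supported $f$, and density gives $\rho(f)=\sum_\gamma f_\gamma\rho(\delta_\gamma)$ for all $f\in\ell^1$. Everything else --- the identification of the commutants, the use of Schur's lemma, the universal property of $C^\ast(\Gamma)$ --- is clean.

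As for comparison with the paper: the paper does not actually prove this theorem. It is quoted as a known result with a reference to Gelfand--Na{\u\i}mark \cite{2GN} and to Palmer \cite[Theorem 12.4.1]{2Palmer}, and no argument is given in the text. Your proof is essentially the textbook proof one finds in those sources, so there is nothing to contrast.
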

Hence the representation theories of $\Gamma$, $\ell^1(\Gamma,\C)$ and $C^\ast(\Gamma)$ coincide. In consideration of this result we will use the same symbol for a unitary representation of $\Gamma$ and its corresponding $^\ast$-representations of the group algebras $\ell^1(\Gamma,\C)$ and $C^\ast(\Gamma)$.

\subsubsection*{States and the GNS construction}
Suppose that $\mathcal A$ is a unital $C^\ast$-algebra.
A positive linear functional $\phi\colon \mathcal A \longrightarrow \C$ is a \textit{state} if $\phi(1_{\mathcal A})=1$.
We denote  by $\mathcal S (\mathcal A )$ the space of states of $\mathcal A$, which is a weak$^\ast$-compact convex subset of the dual space of $\mathcal A$.
The extreme points of $\mathcal S (\mathcal A)$ are called \textit{pure states}.

A representation $\pi$ of $\mathcal A$ is \textit{cyclic} if there exists a vector $v \in \mathcal H _\pi$ such that the set $\{ \pi(a)v \,:\,a\in\mathcal A\}$
is dense in $\mathcal H _\pi$, in which case $v$ is called a cyclic vector.
The Gelfand-Naimark-Segal (GNS) construction links the cyclic representations of $\mathcal A$ and the states of $\mathcal A$ in the following way.
If $\pi$ is a cyclic representation with a cyclic unit vector $v$, then $\phi_{\pi,v}$, defined by
\begin{equation*}
\phi_{\pi,v}(a)= \langle \pi(a)v,v\rangle
\end{equation*}
for every  $a \in \mathcal A$, is a state of $\mathcal A$. If $\pi$ is irreducible, then $\phi_{\pi,v}$ is a pure state.
Moreover, for every state $\phi$ of $\mathcal A$ there is a cyclic representation $(\pi_\phi,\mathcal H _\phi )$ and a  cyclic unit vector $v_\phi \in \mathcal H _\phi$ such that $\phi(a)=\langle \pi_\phi(a)v_\phi,v_\phi \rangle$ for every $a\in \mathcal A$.
The pure states of $\mathcal A$ correspond to irreducible representations of $\mathcal A$ (up to unitary equivalence) via the GNS construction.

\subsubsection*{Type I groups}
Let $\mathcal H$ be a Hilbert space.
The \textit{commutant} of a subset $N$  of $\mathcal B (\mathcal H )$ is the set
	\begin{equation*}
	N^{'}\coloneqq \{\textup{A} \in \mathcal B (\mathcal H)\,:\, \textup{A}\textup{S}=\textup{S}\textup{A} \,\,\text{for all}\,\, \textup{S} \in N\}\,.
	\end{equation*}
A \textit{von Neumann algebra} $\mathcal N$ is a $^\ast$-subalgebra of bounded operators on some Hilbert space $\mathcal H$ which fulfils $\mathcal N = (\mathcal N^{'})^{'}$.
The von Neumann algebra $\mathcal N _\pi$ generated by a unitary representation $\pi$ of a group $\Gamma$,
is the smallest von Neumann algebra which contains $\pi (\Gamma)$.

We call a representation $\pi$ a \textit{factor} if $\mathcal N _\pi \cap \mathcal N^{'} _\pi = \C \cdot 1_{\mathcal B (\mathcal H _\pi)}$. A group is of \textit{Type I} if every factor representation is a direct sum of copies of an irreducible representation.

\subsubsection*{Induced and monomial representations}

Let $H$ be a subgroup of a countably infinite group $\Gamma$. Suppose $\sigma$ is a unitary representation of $H$ with representation space $\mathcal H _\sigma$.
A natural way to extend  the representation $\sigma$ of $H$ to a representation of $\Gamma$ is as follows:
consider the Hilbert space $\mathcal H_\sigma^{\Gamma}$ consisting of all maps  $F\in L^2 (\Gamma ,\mathcal H _\sigma)$  which satisfy
	\begin{equation*}\label{restricted}
	F( \gamma \delta)= \sigma (\delta)F(\gamma)\quad\text{for every  $\delta\in H$ and $\gamma \in\Gamma$.}
	\end{equation*}
The \textit{induced representation}
$\text{Ind}^\Gamma _H(\sigma) \colon \Gamma\ni\gamma\mapsto \text{Ind}^\Gamma_H(\sigma)(\gamma)\in \mathcal B( \mathcal H_\sigma^{\Gamma})$  is then defined  by
$$
\text{Ind}^\Gamma_H(\sigma)(\gamma) F(\gamma') = F(\gamma'\gamma )
\quad\forall \gamma'\in \Gamma.
$$
Hence, $\text{Ind}_H^\Gamma(\sigma)$ can be viewed as  the
 right regular representation of $\Gamma$ acting on the Hilbert space $\mathcal H_\sigma^\Gamma$.
 
This construction will become more transparent when we discuss specific examples
below.

A representation of $\Gamma$ is called \textit{monomial} if it is unitarily equivalent to a representation induced from a one-dimensional representation of a subgroup of $\Gamma$.
	\begin{theorem}[\cite{2Hannabuss}] \label{2typeI/mono}
	If $\Gamma$ is a nilpotent group of Type I, then all its irreducible representations are monomial.
	\end{theorem}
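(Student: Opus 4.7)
The plan is to derive the statement from the Mackey normal-subgroup machine combined with Thoma's characterisation of Type I discrete groups. First I would invoke Thoma's theorem: a countable discrete group is of Type I if and only if it contains an abelian normal subgroup of finite index. Applied to our nilpotent Type I group, this means $\Gamma$ is virtually abelian, so I can fix an abelian normal subgroup $A \triangleleft \Gamma$ with $n := [\Gamma:A]<\infty$. I would then proceed by induction on $n$; the base case $n=1$ is trivial since then $\Gamma=A$ is abelian and every irreducible unitary representation is one-dimensional, hence tautologically monomial.

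For the inductive step I would apply Mackey's analysis with respect to $A$. The group $\Gamma$ acts on the Pontryagin dual $\widehat{A}$ by conjugation, and because the orbits are all finite they are automatically locally closed, so no measure-theoretic pathologies arise. Hence for an irreducible unitary representation $\pi$ of $\Gamma$, Mackey's theorem produces a character $\chi \in \widehat{A}$ appearing in $\pi|_A$, its stabilizer $\Gamma_\chi = \{\gamma\in\Gamma:\chi\circ c_\gamma = \chi\}$, and an irreducible representation $\tilde\rho$ of $\Gamma_\chi$ whose restriction to $A$ is a multiple of $\chi$, such that
\begin{equation*}
\pi \cong \text{Ind}_{\Gamma_\chi}^{\Gamma}(\tilde\rho).
\end{equation*}
If $\Gamma_\chi \subsetneq \Gamma$, then $A \le \Gamma_\chi$ with $[\Gamma_\chi:A] < n$. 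The subgroup $\Gamma_\chi$ is itself nilpotent and virtually abelian (hence Type I), and the induction hypothesis applies, giving $\tilde\rho \cong \text{Ind}_H^{\Gamma_\chi}(\eta)$ for a one-dimensional character $\eta$ of some subgroup $H \le \Gamma_\chi$. By transitivity of induction, $\pi \cong \text{Ind}_H^{\Gamma}(\eta)$, which is monomial.

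If instead $\Gamma_\chi = \Gamma$, i.e.\ $\chi$ is $\Gamma$-invariant, I would reduce $\tilde\rho$ to a projective representation of the finite nilpotent quotient $\Gamma/A$ by twisting away $\chi$ on $A$. Since all irreducible projective representations of a finite nilpotent group are monomial after lifting to a suitable Schur cover (a classical fact going back to Taketa for $p$-groups and extending to all finite nilpotent groups), one obtains a subgroup $A \le H \le \Gamma$ and a character $\eta$ of $H$ from which $\tilde\rho$ is induced; transitivity of induction again yields that $\pi$ is monomial.

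The main obstacle will be the $\Gamma_\chi = \Gamma$ case: one must handle the cocycle arising from the fact that the fixed character $\chi$ may extend only projectively to $\Gamma$, and then quote (or reprove) that irreducible projective representations of finite nilpotent groups are monomial. Everything else is standard Mackey theory, made clean by Thoma's reduction and by the finiteness of orbits that rules out the measure-theoretic subtleties which would otherwise obstruct the normal-subgroup analysis.
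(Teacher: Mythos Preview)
The paper does not prove this theorem; it is simply quoted from Hannabuss \cite{2Hannabuss} with no argument given. So there is no ``paper's own proof'' to compare against---your proposal stands on its own as an attempt to supply a proof the authors chose to cite rather than reproduce.

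Your strategy is sound for the discrete case, which is all the paper needs. Thoma's characterisation does reduce to the virtually-abelian situation, and the Mackey normal-subgroup analysis with induction on $[\Gamma:A]$ is the natural way to proceed. Two remarks. First, the attribution in the $\Gamma_\chi=\Gamma$ case is slightly off: Taketa's theorem says that M-groups are solvable, not that $p$-groups (or nilpotent groups) are M-groups. The fact you actually need---that every finite nilpotent group is an M-group---follows from the more elementary observation that supersolvable groups are M-groups, or directly from the existence of a normal subgroup of prime index. Once you have this, your cocycle argument goes through: the central extension of $\Gamma/A$ realising the obstruction class is again finite nilpotent, hence an M-group, and the inducing subgroup necessarily contains the central kernel (since the centre must act by scalars on an irreducible), so the monomial structure descends to $\Gamma$ as you claim. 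Second, Hannabuss's original argument treats general nilpotent \emph{locally compact} groups, where Thoma's reduction is unavailable; his methods are closer to the Kirillov orbit method and are genuinely different from your finite-index reduction. For the purposes of this paper, however, your discrete argument is entirely adequate.
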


\subsection{Symmetric Banach-$^\ast$-algebras} \label{subs:sym}

Let $\mathcal A$ be a Banach algebra with multiplicative identity element $1_{\mathcal A}$. The \textit{spectrum} of $a\in \mathcal A$ is the set of elements
$c\in \C$ such that $a-c 1_{\mathcal A}$ is not invertible in $\mathcal A$ and will be denoted by $\sigma (a)$.

In order to study invertibility in $\ell^1(\Gamma,\C)$ and $C^\ast(\Gamma)$ in the non-abelian setting we will try to find criteria similar to those described
 in Theorem \ref{2Gelfand}. For this purpose the following definition will play a key role.
\begin{definition}
A unital Banach-$^\ast$-algebra $\mathcal A$ is \textit{symmetric} if for every element $a\in\mathcal A$ the spectrum of $a^\ast a$ is non-negative, i.e., $\sigma(a^\ast a) \subseteq [0,\infty)$.
\end{definition}
Typical examples of symmetric Banach-*-algebras are $C^\ast$-algebras.

We turn to the study of nilpotent groups and their associated group algebras.

	\begin{theorem}[\cite{2Hulanicki}]
	Let $\Gamma$ be a countably infinite discrete nilpotent group. Then the Banach-$^\ast$-algebra $\ell^1(\Gamma,\C)$ is symmetric.
	\end{theorem}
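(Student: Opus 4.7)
The plan is to invoke the standard reformulation: a Banach-$^\ast$-algebra $\mathcal{A}$ with enveloping $C^\ast$-algebra $\mathcal{B}$ is symmetric if and only if for every self-adjoint $a=a^\ast\in\mathcal{A}$ the spectrum $\sigma_{\mathcal{A}}(a)$ is real, equivalently
\begin{equation*}
r_{\mathcal{A}}(a)=\|a\|_{\mathcal{B}}\quad\text{for all } a=a^\ast\in\mathcal{A},
\end{equation*}
where $r_{\mathcal{A}}$ is the spectral radius. For $\mathcal{A}=\ell^1(\Gamma,\C)$ with $\Gamma$ nilpotent (hence amenable), the universal $C^\ast$-norm equals $\|\lambda(\cdot)\|_{\mathcal{B}(\ell^2(\Gamma))}$, where $\lambda$ is the left regular representation. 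So the task becomes proving $r_{\ell^1}(f)=\|\lambda(f)\|$ for every self-adjoint $f\in\ell^1(\Gamma,\C)$. The inequality $\|\lambda(f)\|\le r_{\ell^1}(f)$ is automatic from $\|\lambda(f^n)\|\le\|f^n\|_1$; the content is the reverse inequality.

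First I would handle the abelian case as a warm-up and base: for discrete abelian $\Gamma$ the Gelfand transform embeds $\ell^1(\Gamma,\C)$ into $C(\widehat{\Gamma},\C)$, self-adjoint elements map to real-valued functions, and $\sigma_{\ell^1}(f)=\widehat{f}(\widehat{\Gamma})\subset\R$. The nilpotent case would then be approached via the polynomial growth of $\Gamma$, which is the Bass--Guivarc'h theorem: there is a polynomial $P$ such that $|B_R|\le P(R)$ for the word-length ball $B_R\subset\Gamma$. For a self-adjoint $f$ supported in $B_R$, the convolution power $f^{\ast n}$ is supported in $B_{nR}$, and Cauchy--Schwarz gives
\begin{equation*}
\|f^{\ast n}\|_1\le|B_{nR}|^{1/2}\,\|f^{\ast n}\|_2\le P(nR)^{1/2}\,\|\lambda(f)^{n-1}\|\,\|f\|_2.
\end{equation*}
Taking $n$-th roots and letting $n\to\infty$ yields $r_{\ell^1}(f)\le\|\lambda(f)\|$ for finitely supported self-adjoint $f$, and an approximation argument (using upper semicontinuity of the spectral radius on the self-adjoint part, combined with $\|\lambda(\cdot)\|\le\|\cdot\|_1$) extends this to all of $\ell^1(\Gamma,\C)$. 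Combined with the trivial inequality in the other direction, this gives the sought equality and hence symmetry.

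The main obstacle is the approximation step in the extension from finitely supported self-adjoint elements to general self-adjoint elements of $\ell^1(\Gamma,\C)$: the spectral radius is continuous on the self-adjoint part of a $C^\ast$-algebra, but only upper semicontinuous on a general Banach-$^\ast$-algebra, so one has to exploit symmetry of the quotient already established on the dense subalgebra to promote convergence of spectra rather than mere spectral radii. An alternative, closer to Hulanicki's original argument, is induction on the nilpotency class, using Pontryagin duality on the centre $Z=Z(\Gamma)$ to write $\ell^1(\Gamma,\C)$ as a continuous field over $\widehat{Z}$ of twisted $\ell^1$-algebras of $\Gamma/Z$, which is nilpotent of one lower class; here the difficulty is that plain symmetry is not evidently stable under central extensions, so one must strengthen the inductive hypothesis to a quantitative form (such as the spectral-radius identity above uniformly in the central character).
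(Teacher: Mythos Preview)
The paper does not prove this theorem; it is quoted as a result of Hulanicki \cite{2Hulanicki} and used as a black box. So there is no ``paper's own proof'' to compare against, and what you have written is an independent attempt at reproducing (a version of) Hulanicki's argument.

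Your outline via polynomial growth is one of the standard modern routes and is essentially sound for finitely supported self-adjoint elements. Two remarks are in order. First, the Bass--Guivarc'h polynomial bound on $|B_R|$ presupposes that $\Gamma$ is finitely generated, which the theorem does not assume. This is harmless at the level of finitely supported $f$: such $f$ lies in $\ell^1(H,\C)$ for the finitely generated nilpotent subgroup $H=\langle \operatorname{supp} f\rangle$, and both $\|f^{\ast n}\|_1$ and $\|\lambda(f)\|$ are unchanged when computed in $H$ rather than $\Gamma$ (the latter because $\lambda_\Gamma|_H$ is a multiple of $\lambda_H$). You should say this explicitly.

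Second, the approximation step you flag as ``the main obstacle'' is a genuine obstacle, and your proposed mechanism does not close it. Upper semicontinuity of $r_{\ell^1}$ gives $\limsup_n r_{\ell^1}(f_n)\le r_{\ell^1}(f)$, which combined with $r_{\ell^1}(f_n)=\|\lambda(f_n)\|\to\|\lambda(f)\|$ yields only the trivial inequality $\|\lambda(f)\|\le r_{\ell^1}(f)$. To pass from the dense subalgebra to all of $\ell^1(\Gamma,\C)$ one needs an additional idea: either (a) run the Cauchy--Schwarz estimate with a polynomial weight $w(\gamma)=(1+|\gamma|)^s$ so that $\sum_\gamma w(\gamma)^{-1}<\infty$, which lets you bound $\|f^{\ast n}\|_1$ for \emph{arbitrary} $f\in\ell^1$ without truncating the support; or (b) follow Hulanicki's original route, which is an induction on the nilpotency class using the central decomposition over $\widehat{Z(\Gamma)}$. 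Your last paragraph gestures at (b) but correctly notes that naive symmetry is not obviously stable under the inductive step; Hulanicki's actual argument carries a stronger inductive hypothesis (essentially the spectral radius identity for positive elements, uniform in the central character). Either completion is nontrivial, and as written your sketch stops short of a proof.
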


The reason why it is convenient to restrict to the study of invertibility in symmetric unital Banach-$^\ast$-algebra is demonstrated by the following theorems,
which show similarities to Wiener's Lemma and Theorem \ref{2Gelfand}, respectively.

{For the class of symmetric group algebras one has the following important result on  inverse-closedness.}

	\begin{theorem}[{\cite{2Lud}, see also \cite[Theorem 11.4.1 and Corollary 12.4.5]{2Palmer}}]\label{2t:Wienerpair}
	If $\ell^1(\Gamma,\C)$ is a symmetric Banach-$^\ast$-algebra, then
		\begin{enumerate}
		\item $\ell^1(\Gamma,\C)$ is semisimple, i.e., the intersection of the kernels of all the irreducible representations of
		$\ell^1(\Gamma,\C)$ is trivial.
		\item $\ell^1(\Gamma,\C)$ and its enveloping $C^\ast$-algebra $C^\ast(\Gamma)$ form a Wiener pair.
		\end{enumerate}
	\end{theorem}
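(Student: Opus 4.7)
The plan is to pass through the enveloping $C^\ast$-algebra $C^\ast(\Gamma)$ and to exploit symmetry to identify the $\ell^1$-spectrum with the $C^\ast$-spectrum. Throughout, write $\iota\colon\ell^1(\Gamma,\C)\to C^\ast(\Gamma)$ for the canonical map. I first observe that $\iota$ is injective: the left regular representation $\lambda$ of $\Gamma$ on $\ell^2(\Gamma,\C)$ extends by Theorem \ref{2t : reps} to a $^\ast$-representation of $\ell^1(\Gamma,\C)$ satisfying $\lambda(f)\delta_e=f$, so $\|\iota(f)\|_{C^\ast}\geq \|\lambda(f)\|_{\mathcal B(\ell^2)}>0$ whenever $f\neq 0$.

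For part (1) the key input is the Pt\'ak identity for symmetric Banach-$^\ast$-algebras: the function $|a|_\ast:=\rho(a^\ast a)^{1/2}$ is a $C^\ast$-seminorm that coincides with the enveloping $C^\ast$-norm, and (by the Raikov--Pt\'ak theorem that symmetric algebras are reduced) its kernel coincides with the Jacobson radical of $\ell^1(\Gamma,\C)$. Injectivity of $\iota$ then forces the Jacobson radical to be trivial. Since by Gelfand--Naimark the irreducible $^\ast$-representations of $C^\ast(\Gamma)$ separate points, and by Theorem \ref{2t : reps} they are in bijection (up to unitary equivalence) with the irreducible $^\ast$-representations of $\ell^1(\Gamma,\C)$, the semisimplicity claim follows.

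For part (2), suppose $f\in\ell^1(\Gamma,\C)$ is invertible in $C^\ast(\Gamma)$. I would reduce to a self-adjoint problem by setting $a=f^\ast f\in\ell^1(\Gamma,\C)$, which is self-adjoint and invertible in $C^\ast(\Gamma)$ with $0\notin\sigma_{C^\ast(\Gamma)}(a)$. The core ingredient is the spectral invariance theorem for symmetric Banach-$^\ast$-algebras, which asserts $\sigma_{\ell^1(\Gamma,\C)}(a)=\sigma_{C^\ast(\Gamma)}(a)$ for self-adjoint $a$; combined with $0\notin\sigma_{C^\ast(\Gamma)}(a)$, this yields invertibility of $a$ in $\ell^1(\Gamma,\C)$. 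Then $g:=a^{-1}f^\ast\in\ell^1(\Gamma,\C)$ is a left inverse of $f$, and comparing with the inverse of $f$ in $C^\ast(\Gamma)$ shows that $g=f^{-1}$.

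The main obstacle in both parts is justifying that symmetry collapses the $\ell^1$-spectrum of self-adjoint elements onto the $C^\ast$-spectrum and correspondingly identifies the Jacobson radical with $\ker\iota$. These are the substantial classical inputs (due to Pt\'ak, Hulanicki and Ludwig) behind the cited result: without the symmetry hypothesis the $\ell^1$-spectrum can be strictly larger than the $C^\ast$-spectrum and the Jacobson radical can contain elements invisible to every $^\ast$-representation, so neither conclusion would follow from the elementary injectivity of $\iota$ alone.
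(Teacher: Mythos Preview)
The paper does not prove this theorem: it is stated as a cited result (with references to Ludwig and to Palmer's monograph) in the review Section~\ref{2section2}, with no accompanying argument. Your sketch is a correct outline of the standard proof found in those references---reducing to self-adjoint elements, invoking the Pt\'ak/Raikov spectral identity $\sigma_{\ell^1}(a)=\sigma_{C^\ast}(a)$ for $a=a^\ast$ that characterises symmetry, and using injectivity of $\iota$ via the regular representation---so there is nothing to compare against in the paper itself.
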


Next we are discussing spectral invariance of convolution operators. It is a well known fact (cf. \cite{2Gro}) that invertibility of $f\in\ell^1(\Z,\C)$ can be validated by studying invertibility of the convolution operator $\textup{C}_f$ acting on the Hilbert space $\ell^2(\Z,\C)$. Moreover, the spectrum of $\textup{C}_f$
is independent of the domain, i.e., the spectrum of the operator $\textup{C}_f\colon\ell^p(\Z,\C)\longrightarrow \ell^p(\Z,\C)$ is the same for all $p\in [1,\infty]$. As the following theorem shows, this result is true for a large class of groups, in particular, for all finitely generated nilpotent groups.
	\begin{theorem}[\cite{2Barnes}]\label{2t:barnes}
	Let $f\in \ell^1(\Gamma,\C)$ and  $\textup{C}_f$ the associated convolution operator on $\ell^p(\Gamma,\C)$. For all $1\leq p \leq \infty$
	one has
	$\sigma_{\mathcal B (\ell^p(\Gamma,\C))}(\textup{C}_f)=\sigma_{\mathcal B (\ell^2(\Gamma,\C))}(\textup{C}_f)$ if and only if $\Gamma$ is amenable and $\ell^1(\Gamma,\C)$ is a symmetric
	Banach-$^\ast$-algebra.
	\end{theorem}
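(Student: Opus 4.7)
The plan is to prove the two implications separately, with sufficiency reducing to a chase through a chain of algebras and necessity proceeding by contradiction.

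For the sufficient direction (amenable plus symmetric implies spectral equality), I would exploit the chain
\begin{equation*}
\mathcal{B}(\ell^1(\Gamma,\C))\;\supseteq\;\ell^1(\Gamma,\C)\;\hookrightarrow\;C^\ast(\Gamma)\;\twoheadrightarrow\;C^\ast_r(\Gamma)\;\hookrightarrow\;\mathcal{B}(\ell^2(\Gamma,\C)),
\end{equation*}
where the embedding of $\ell^1(\Gamma,\C)$ into $\mathcal{B}(\ell^1(\Gamma,\C))$ is by left convolution (which is faithful, since $\textup{C}_f\delta_e=f$, so $\sigma_{\mathcal{B}(\ell^1(\Gamma,\C))}(\textup{C}_f)=\sigma_{\ell^1(\Gamma,\C)}(f)$). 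Symmetry together with Theorem \ref{2t:Wienerpair} makes $\ell^1(\Gamma,\C)\hookrightarrow C^\ast(\Gamma)$ a Wiener pair, so spectra coincide there; amenability is equivalent to the canonical surjection $C^\ast(\Gamma)\twoheadrightarrow C^\ast_r(\Gamma)$ being an isomorphism; and inverse-closedness of a unital $C^\ast$-subalgebra of $\mathcal{B}(\ell^2(\Gamma,\C))$ gives $\sigma_{C^\ast_r(\Gamma)}(f)=\sigma_{\mathcal{B}(\ell^2(\Gamma,\C))}(\textup{C}_f)$. Concatenating yields the equality at $p=1$ and $p=2$.

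To reach intermediate $p\in(1,2)\cup(2,\infty)$, I would argue in two steps. First, if $f-\lambda\delta_e$ is invertible in $\ell^1(\Gamma,\C)$ with inverse $g\in\ell^1(\Gamma,\C)$, then $\textup{C}_g$ is bounded on every $\ell^p(\Gamma,\C)$ with $\|\textup{C}_g\|\le\|g\|_1$ and inverts $\textup{C}_{f-\lambda\delta_e}$ on the dense subspace of finitely supported vectors; hence $\sigma_{\mathcal{B}(\ell^p(\Gamma,\C))}(\textup{C}_f)\subseteq\sigma_{\ell^1(\Gamma,\C)}(f)$. For the reverse inclusion $\sigma_{\mathcal{B}(\ell^2(\Gamma,\C))}(\textup{C}_f)\subseteq\sigma_{\mathcal{B}(\ell^p(\Gamma,\C))}(\textup{C}_f)$, I would combine duality (the adjoint of $\textup{C}_f$ on $\ell^p(\Gamma,\C)$ is $\textup{C}_{f^\ast}$ on $\ell^{p'}(\Gamma,\C)$) with Riesz--Thorin interpolation: a resolvent which is bounded on $\ell^p$ is then bounded on $\ell^{p'}$, and the two operators agree on finitely supported vectors, so Riesz--Thorin produces a bounded operator on $\ell^2$ which is verified to be the inverse of $\textup{C}_f-\lambda$ by a density argument.

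For the necessary direction, I would argue contrapositively and split into two cases. If $\ell^1(\Gamma,\C)$ is not symmetric, pick $a\in\ell^1(\Gamma,\C)$ with $\sigma_{\ell^1(\Gamma,\C)}(a^\ast a)\not\subseteq[0,\infty)$; then $\sigma_{\mathcal{B}(\ell^1(\Gamma,\C))}(\textup{C}_{a^\ast a})$ also fails to lie in $[0,\infty)$, whereas $\textup{C}_{a^\ast a}=\textup{C}_a^\ast\textup{C}_a$ is a positive operator on $\ell^2(\Gamma,\C)$, so its $\ell^2$-spectrum is in $[0,\infty)$ and equality of spectra fails at $f=a^\ast a$. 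If instead $\ell^1(\Gamma,\C)$ is symmetric but $\Gamma$ is not amenable, then $C^\ast(\Gamma)\ne C^\ast_r(\Gamma)$ and one can find a self-adjoint $f\in\ell^1(\Gamma,\C)$ with $\|f\|_{C^\ast(\Gamma)}>\|f\|_{C^\ast_r(\Gamma)}$; by Theorem \ref{2t:Wienerpair} the left-hand side equals the spectral radius of $f$ in $\ell^1(\Gamma,\C)$ while the right-hand side equals the spectral radius of $\textup{C}_f$ on $\ell^2(\Gamma,\C)$, contradicting the assumed equality.

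The step I expect to be the main obstacle is the interpolation argument extending the equality of spectra from $\{1,2\}$ to all intermediate $p$: one must not merely interpolate norms but confirm that the Riesz--Thorin-interpolated operator is genuinely the two-sided inverse of $\textup{C}_f-\lambda$ on $\ell^2(\Gamma,\C)$, which relies on all the $\ell^p$-resolvents coinciding on the common dense subspace of finitely supported functions.
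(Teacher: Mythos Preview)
The paper does not supply a proof of this theorem; it is quoted from Barnes \cite{2Barnes} as background and no argument is given in the text. There is therefore nothing in the paper to compare your proposal against.

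That said, your outline is essentially the standard route and is close in spirit to Barnes's original argument: the chain $\ell^1(\Gamma,\C)\hookrightarrow C^\ast(\Gamma)\cong C^\ast_r(\Gamma)\hookrightarrow\mathcal{B}(\ell^2(\Gamma,\C))$ under the hypotheses, together with the trivial inclusion $\sigma_{\mathcal{B}(\ell^p)}(\textup{C}_f)\subseteq\sigma_{\ell^1}(f)$ and an interpolation/duality step to cover intermediate~$p$, is exactly how one handles sufficiency. For necessity your two cases (failure of symmetry via an $a^\ast a$ with non-positive $\ell^1$-spectrum, failure of amenability via a gap between the full and reduced $C^\ast$-norms on a self-adjoint element) are the right ideas.

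Two small points you should tighten if you write this up in full. First, the identification $\sigma_{\mathcal{B}(\ell^1)}(\textup{C}_f)=\sigma_{\ell^1(\Gamma,\C)}(f)$ requires more than $\textup{C}_f\delta_e=f$: you need that a bounded inverse of $\textup{C}_f$ on $\ell^1$ is again a left convolution operator, which follows from the fact that such an inverse commutes with all right translations. Second, in the interpolation step you correctly flag the issue of identifying the interpolated operator with the genuine resolvent on $\ell^2$; this is indeed the delicate point, and is handled by checking agreement on finitely supported functions together with boundedness.
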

In particular, for a nilpotent group $\Gamma$, $f\in\ell^1(\Gamma,\C)$ is invertible in $\ell^1(\Gamma,\C)$ if and only if $0 \notin \sigma_{\mathcal B (\ell^p(\Gamma,\C))}(\textup{C}_f)$ for any $p\in [1,\infty]$.

Let us now give a condition for invertibility of an element $\ell^1(\Gamma,\C)$, where $\Gamma$ is an arbitrary discrete countably infinite group, in terms of the point spectrum of the corresponding convolution operator.
	\begin{theorem}[{\cite[Theorem 3.2]{2DS}}]\label{2t:den-schmidt}
	An element $f\in \ell^1(\Gamma,\C)$ is invertible in $\ell^1(\Gamma,\C)$ if and only if
	\begin{equation*}
	K_\infty(f)\coloneqq \{ g\in \ell^\infty(\Gamma,\C)\,:\, \textup{C}_f g=0 \}=\{0\}\,.
	\end{equation*}
	\end{theorem}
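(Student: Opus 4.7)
For the forward direction the plan is essentially a one-line calculation: if $f$ has inverse $g \in \ell^1(\Gamma, \C)$, convolution extends associatively to a bilinear map $\ell^1(\Gamma, \C) \times \ell^\infty(\Gamma, \C) \to \ell^\infty(\Gamma, \C)$ via the Young-type bound $\|gh\|_\infty \leq \|g\|_1 \|h\|_\infty$, and then any $h \in K_\infty(f)$ satisfies
\[
h = \delta_e\, h = (gf)\,h = g(fh) = 0,
\]
so $K_\infty(f) = \{0\}$.

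The substantive direction is ($\Leftarrow$). Assuming $K_\infty(f) = \{0\}$, the plan is to realise a left inverse of $f$ in $\ell^1$ by showing the right-multiplication operator $\textup{R}_f \colon \ell^1(\Gamma, \C) \to \ell^1(\Gamma, \C)$, $h \mapsto hf$, is surjective; then $\delta_e \in \textup{R}_f(\ell^1)$ furnishes $g \in \ell^1$ with $gf = \delta_e$. A mirror argument for left multiplication $\textup{C}_f$ yields a right inverse $\tilde g$ with $f\tilde g = \delta_e$, and associativity $g = g(f\tilde g) = (gf)\tilde g = \tilde g$ delivers the two-sided inverse.

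The first step will be Hahn--Banach duality. Under the standard pairing $(\ell^1)^* = \ell^\infty$, I will compute
\[
\sum_\mu (hf)_\mu \phi_\mu = \sum_\gamma h_\gamma \sum_\delta f_\delta \phi_{\gamma\delta},
\]
so $\phi \in \ell^\infty$ annihilates $\textup{R}_f(\ell^1)$ exactly when $\sum_\delta f_\delta \phi_{\gamma\delta} = 0$ for every $\gamma$. Substituting the inversion anti-involution $\check\phi_\gamma := \phi_{\gamma^{-1}}$ and reindexing recasts this as $(f\check\phi)_{\gamma^{-1}} = 0$ for all $\gamma$, i.e.\ $\check\phi \in K_\infty(f)$. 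The hypothesis then forces the annihilator to be trivial, so $\textup{R}_f(\ell^1)$ is norm-dense in $\ell^1(\Gamma, \C)$.

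Step two, the heart of the proof, is to upgrade density to $\textup{R}_f(\ell^1) = \ell^1$, i.e.\ to show $\textup{R}_f$ has closed range. The plan is a weak-$^\ast$ compactness argument in $\ell^\infty = (\ell^1)^\ast$: any approximating sequence $h_n$ with $h_n f \to \delta_e$ and $\|h_n\|_1 \to \infty$ will, after rescaling, sit in the unit ball of $\ell^\infty$ and, by Banach--Alaoglu, admit a weak-$^\ast$ cluster point $h_\infty$; weak-$^\ast$ continuity of right convolution by $f$ (which is the adjoint of a bounded operator on $\ell^1$) then forces $h_\infty f = 0$, and applying the involution $\check\cdot$ places a non-zero element of $\ell^\infty$ into $K_\infty(f)$, contradicting the hypothesis. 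The hard part will be guaranteeing $h_\infty \neq 0$ -- the classical subtlety that unit-norm sequences in $\ell^1$ can converge weak-$^\ast$ to zero in $\ell^\infty$ -- together with the parallel reasoning needed on the left-multiplication side (effectively passing from $K_\infty(f) = \{0\}$ to the mirror kernel condition that dualises $\textup{C}_f$); these are the genuinely delicate points on which the whole argument will turn.
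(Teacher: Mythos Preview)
The paper does not supply a proof of this theorem; it is quoted from \cite[Theorem~3.2]{2DS} without argument, so there is no in-paper proof to compare against. What follows is an assessment of your proposal on its own merits.

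Your forward direction and the duality computation are correct: the annihilator in $\ell^\infty$ of $\textup{R}_f(\ell^1)$ is, via the inversion involution, exactly $K_\infty(f)$, so the hypothesis gives dense range of $\textup{R}_f$. The problem is that the two points you yourself flag as ``genuinely delicate'' are not technicalities but real gaps, and your sketch contains no mechanism to close them. For the closed-range step, normalising $h_n/\|h_n\|_1$ and extracting a weak-$^*$ cluster point $h_\infty$ in $\ell^\infty$ gives no control whatsoever against $h_\infty=0$: unit vectors in $\ell^1$ escape to zero weak-$^*$ in $\ell^\infty$ all the time (take $\delta_{\gamma_n}$ with $\gamma_n\to\infty$), and nothing in your setup provides concentration or a translation-renormalisation that would prevent this. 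Without an additional idea the argument stalls here.

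The second issue is equally real. Your duality identifies $K_\infty(f)=\{0\}$ with injectivity of $\textup{R}_f^*$, which (granting closed range) yields a \emph{left} inverse $g$ with $gf=\delta_e$. To get a right inverse you need the mirror condition --- injectivity on $\ell^\infty$ of the adjoint of $\textup{C}_f\colon\ell^1\to\ell^1$ --- and a short computation shows this is equivalent to $K_\infty(f^*)=\{0\}$, which is not the hypothesis. One can try to close the loop by noting that $gf=\delta_e$ forces $(fg-\delta_e)f=0$, but this places $fg-\delta_e$ in the kernel of \emph{right} convolution by $f$, again the wrong side. The proof in \cite{2DS} avoids splitting into one-sided inverses in this way; if you want to push your approach through, both the closed-range step and the left/right symmetry require genuinely new input.
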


	This theorem says that it is enough to check if $0$ is an eigenvalue of the left convolution operator $\textup{C}_f\colon \ell^\infty(\Gamma,\C)\longrightarrow \ell^\infty(\Gamma,\C)$ in order to determine whether $f$ is invertible or not (cf. \eqref{2eq:convolution}).

Finally, we present a condition for invertibility in a symmetric unital Banach-$^\ast$-algebra $\mathcal A$ which links invertibility in $\mathcal A$  to its representation theory.
\begin{theorem}[\cite{2Neumark}] \label{2t:neumark}
	An element $a$ in a symmetric unital Banach-$^\ast$-algebra $\mathcal A$ is not left invertible in $\mathcal A$ if and only
	if there exists a pure state $\phi$ with $\phi(a^\ast a)=0$. Equivalently, $a$ is not left invertible if and only if there exists
	an irreducible representation $\pi$ of $\mathcal A$ and a unit vector $u \in \mathcal H _\pi$ such that $\pi(a)u=0$.
	\end{theorem}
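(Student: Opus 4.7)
The proof splits into the equivalence between the two formulations and the main left-invertibility dichotomy. For the equivalence, my plan is to invoke the GNS construction recalled in Subsection~\ref{subs:reptheo}: given a pure state $\phi$ with $\phi(a^\ast a)=0$, the associated GNS representation $\pi_\phi$ is irreducible and its cyclic unit vector $v_\phi$ satisfies $\|\pi_\phi(a)v_\phi\|^2=\phi(a^\ast a)=0$, so $u:=v_\phi$ works; conversely, for an irreducible $\pi$ and a unit vector $u$ with $\pi(a)u=0$, the vector functional $b\mapsto\langle\pi(b)u,u\rangle$ is a pure state (since in an irreducible representation every non-zero vector is cyclic) and vanishes on $a^\ast a$.

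For the necessity direction I would suppose $ba=1_{\mathcal A}$ and apply the Cauchy--Schwarz inequality $|\phi(y^\ast x)|^2\le\phi(y^\ast y)\phi(x^\ast x)$ (valid for any state, since $(x,y)\mapsto\phi(y^\ast x)$ is a positive semi-definite sesquilinear form) with $x=a$ and $y=b^\ast$, obtaining
\[
1=|\phi(1_{\mathcal A})|^2=|\phi(ba)|^2\le\phi(bb^\ast)\,\phi(a^\ast a),
\]
which forces $\phi(a^\ast a)>0$ for every state $\phi$ and thus rules out the existence of the desired pure state.

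For the sufficiency direction, assume $a$ is not left invertible. I would first reduce to the self-adjoint case: the element $h:=a^\ast a$ is not left invertible either, for $bh=1$ would give $(ba^\ast)a=1$; being self-adjoint, $h$ is actually not invertible (taking adjoints of $bh=1$ yields $hb^\ast=1$, so left invertibility plus self-adjointness gives invertibility). Hence $0\in\sigma_{\mathcal A}(h)$, and by symmetry of $\mathcal A$ we have $\sigma(h)\subseteq[0,\infty)$. The key step is then to produce a state $\phi\in\mathcal S(\mathcal A)$ with $\phi(h)=0$. My plan is to pass to the enveloping $C^\ast$-algebra $C^\ast(\mathcal A)$: symmetry ensures the canonical map $\mathcal A\to C^\ast(\mathcal A)$ is spectrum-preserving on self-adjoints, so $0\in\sigma_{C^\ast(\mathcal A)}(h)$; in the commutative $C^\ast$-subalgebra generated by $h$ and $1_{\mathcal A}$, evaluation at $0$ via the Gelfand transform is a state, which extends by Hahn--Banach (norm-one functionals fixing $1$ on a unital $C^\ast$-algebra are automatically positive) to a state on $C^\ast(\mathcal A)$, and restricts to the desired state on $\mathcal A$. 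To upgrade to a \emph{pure} state I would then consider the weak-$\ast$-compact convex set $S_0:=\{\phi\in\mathcal S(\mathcal A):\phi(h)=0\}$, which is non-empty by the above, and extract an extreme point $\phi_0$ by Krein--Milman. Any such $\phi_0$ is pure in $\mathcal S(\mathcal A)$: a decomposition $\phi_0=\tfrac12(\phi_1+\phi_2)$ with $\phi_i\in\mathcal S(\mathcal A)$ forces $\phi_i(h)\ge 0$ and $\phi_1(h)+\phi_2(h)=0$, so $\phi_i\in S_0$, and extremality of $\phi_0$ in $S_0$ yields $\phi_1=\phi_2=\phi_0$.

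The main obstacle will be the construction of the state vanishing on $h$: outside of $C^\ast$-algebras there is no functional calculus at our disposal, and symmetry of $\mathcal A$ is used essentially both to locate $\sigma(h)$ in $[0,\infty)$ and to ensure that the enveloping-$C^\ast$-algebra detour does not lose spectral information about $h$ (this is the content of spectrum-preserving embeddings for symmetric Banach-$^\ast$-algebras, exemplified in the group-algebra setting by Theorem~\ref{2t:Wienerpair}).
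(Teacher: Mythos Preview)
The paper does not supply a proof of this theorem; it is quoted from Na{\u\i}mark's monograph \cite{2Neumark} and used as a black box throughout. There is therefore no ``paper's own proof'' to compare against.

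Your argument is correct and follows the standard route. The GNS correspondence handles the equivalence of the two formulations; Cauchy--Schwarz handles the easy direction; and for the main direction you correctly reduce to the positive element $h=a^\ast a$, use symmetry to place $\sigma(h)\subseteq[0,\infty)$ with $0\in\sigma(h)$, pass to the enveloping $C^\ast$-algebra to obtain a state annihilating $h$, and then refine to a pure state via Krein--Milman (your face argument is valid because $\phi(a^\ast a)\ge 0$ for every state, so $S_0$ is the set where a nonnegative affine functional attains its minimum). The one step that deserves a reference is the assertion that symmetry makes the canonical map $\mathcal A\to C^\ast(\mathcal A)$ spectrum-preserving (equivalently, that $\mathcal A$ is inverse-closed in $C^\ast(\mathcal A)$): this is exactly one of the standard characterisations of symmetry recorded e.g.\ in Palmer \cite[Theorem~11.4.1]{2Palmer}, and is the general form of what the paper states for group algebras in Theorem~\ref{2t:Wienerpair}. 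Your identification of this as the crux is on the mark.
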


This result should be compared with Gelfand's theory for commutative Banach algebras. Wiener's Lemma for $\ell^1(\Z,\C)$ says that an element
$f \in \ell^1(\Z,\C)$ is invertible if and only if the Fourier-transform of $f$ does not vanish on $\mathbb{T}$, i.e., $(\mathcal F f) (s) \not = 0$ for all $s\in \mathbb{T}$.\footnote{\,To fix notation: for $F\in L^2(\mathbb{T},\lambda _\mathbb{T})$ (where $\lambda _\mathbb{T}$ is the Lebesgue measure on $\mathbb{T}$), the Fourier transform $\hat{F}\colon \mathbb{Z}\longrightarrow \mathbb{C}$ is defined by $\hat{F}_n=\int _\mathbb{T}F(s)e^{-2\pi ins}\, d\lambda _\mathbb{T}(s)$. The Fourier transform $(\mathcal F g)\colon \mathbb{T}\longrightarrow \mathbb{C}$ of $g\in \ell ^2(\mathbb{Z},\mathbb{C})$ is defined by $(\mathcal F g)(s)=\sum_{n\in \mathbb{Z}}g_n e^{2\pi ins}$.} The Fourier-transform of $f$, evaluated at the point $\theta\in \mathbb{T}$, can be viewed as the evaluation of the one-dimensional irreducible unitary representation $\pi_\theta\colon n\mapsto e^{2\pi in\theta}$ of $\Z$ at $f$, i.e.,
	\begin{equation*}
	(\mathcal F f) (\theta)= \Bigl(\sum\nolimits_{n\in\Z} f_n \pi_\theta (n)\Bigr)1  = \pi_\theta(f)1\,.
	\end{equation*}

We will explain in the next section that it is not feasible to describe explicitly the space of unitary equivalence classes of irreducible representations of a non-Type I group. Hence, Theorem \ref{2t:neumark} seems to be of limited use for investigating invertibility of an element $f \in \ell^1(\Gamma,\C)$ for a non-Type I nilpotent group $\Gamma$. However, as we will see later, it is one of the key results for obtaining a Wiener Lemma for $\ell^1(\Gamma,\C)$.

\section{The dual of the discrete Heisenberg group and a Wiener Lemma}\label{2section3}

In this section we explain how results from ergodic theory give insight into the space of irreducible representations of the discrete Heisenberg group, but that this space has no reasonable parametrisation and is therefore not useful for determining invertibility in the corresponding group algebras (cf. Theorem \ref{2t:neumark}). At the end of this section, we will state our main result -- a Wiener Lemma for the discrete Heisenberg group $\h$ -- which allows one to restrict the attention to certain canonical representations of $\h$ which \textit{can} be parametrised effectively and used for solving the invertibility problem.

\subsection{The dual of a discrete group}
Let $\Gamma$ be a countable discrete group.
Denote by $\widehat \Gamma$ the \textit{dual} of $\Gamma$, i.e., the set of all unitary equivalence classes of irreducible unitary representations of $\Gamma$.
\begin{definition}
Let $\mathcal A$ be a $C^\ast$-algebra. A closed two-sided ideal $\mathtt I$ of $\mathcal A$ is \textit{primitive} if  there exists an irreducible representation $\pi$ of $\mathcal A$ such that $\ker (\pi )=\mathtt I$. The set of primitive ideals of $\mathcal A$ is denoted by $\text{Prim}(\mathcal A)$.
\end{definition}

Suppose that the group $\Gamma$ is not of Type I. Then certain pathologies arise:
\begin{itemize}
\item The map $\widehat \Gamma \longrightarrow \text{Prim}(C^\ast(\Gamma))$ given by $\pi \mapsto \ker (\pi)$  is not injective.
In other words,
if $\pi_1, \pi_2 \in \widehat \Gamma$, then $\ker (\pi_1 )=\ker (\pi_2 )$ does not necessarily imply that $\pi_1$ and $\pi_2$ are unitarily equivalent.

\item $\widehat \Gamma$ is not  behaving nicely neither as a topological space nor as a measurable space in its natural topology or Borel structure, respectively (cf. \cite[Chapter 7]{2Folland} for an overview).
\end{itemize}

Furthermore, there are examples where the direct integral decomposition of a representation is not unique, in the sense that there are disjoint measures $\mu,\nu$ on $\widehat \Gamma$ such that
$
\int_{\widehat \Gamma}^\oplus \pi d\mu$ and
$
\int_{\widehat \Gamma}^\oplus \pi d\nu$ are unitarily equivalent.
 Moreover, we cannot assume that all irreducible
representations are induced from one-dimensional representations of finite-index subgroups, as is the case for nilpotent groups of Type I by Theorem \ref{2typeI/mono}.

\subsection{The discrete Heisenberg group and its dual}

The discrete Heisenberg group $\h$ is generated by $S=\{x,x^{-1},y,y^{-1}\}$, where
	\begin{equation*}
x = \left(
	\begin{matrix}
1 & 1 & 0
	\\
0 & 1 & 0
	\\
0 & 0 & 1
	\end{matrix}
\right), \quad y = \left(
	\begin{matrix}
1 & 0 & 0
	\\
0 & 1 & 1
	\\
0 & 0 & 1
	\end{matrix}
\right) .
	\end{equation*}
The centre of $\h$ is generated by
	\begin{equation*}
\smash[t]{z= xyx^{-1}y^{-1}= \left(
	\begin{matrix}
1 & 0 & 1
	\\
0 & 1 & 0
	\\
0 & 0 & 1
	\end{matrix}
\right).}
	\end{equation*}
The elements $x,y,z$ satisfy the following commutation relations
	\begin{equation}
	\label{2eq:relations}
xz=zx,\enspace yz=zy,\enspace x^ky^l=y^lx^kz^{kl},\enspace k,l\in\mathbb{Z}.
	\end{equation}
The discrete Heisenberg group is nilpotent and hence amenable.

Since $\h$ does not possess an abelian normal subgroup of finite index it is not a group of Type I (cf. \cite{2Thoma}), and hence the space of irreducible representations  does not have any nice structure as discussed above. As we will show below, one can construct uncountably many unitarily inequivalent  irreducible representations of $\h$ for every irrational $\theta \in\mathbb{T}$.  These representations arise from certain singular measures on $\T$.
This fact is well-known to specialists, but details
are not easily accessible in the literature.
Since these results are important for our understanding of invertibility, we present
this construction in some detail for the convenience of the reader.
We would like to mention first that Moran announced in \cite{2M} a construction of unitary representations of $\h$ using the same approach as presented here. These results were not published as far as we know. Moreover, Brown \cite{2Brown} gave examples of unitary irreducible representations  of the discrete Heisenberg group which are not monomial.

Let $(X,\mathfrak{B},\mu )$ be a measure space, where  $X$ is a compact metric space,
$\mathfrak{B}$ is a Borel $\sigma$-algebra, and $\mu$ a finite measure.

\begin{definition}
	A probability measure $\mu$ is \textit{quasi-invariant} with respect to a homeomorphism $\phi\colon X\longrightarrow X$ if $\mu(B)=0$ if and only if $\mu(\phi B)=0$, for $B \in \mathfrak{B}$.
	A quasi-invariant measure $\mu$ is ergodic  if
	\begin{equation*}
		 B \in \mathfrak{B}\quad \textup{and}\quad \phi B= B \implies \mu(B) \in \{0,1\} \,.
	\end{equation*}
\end{definition}
In \cite{2Keane} uncountably many inequivalent ergodic quasi-invariant measures for every irrational rotation of the circle were constructed.
Later it was shown in \cite{2KW} that a homeomorphism $\phi$ on a compact metric space $X$ has uncountably many inequivalent non-atomic ergodic quasi-invariant measures if and only if $\phi$ has a recurrent point $x$, i.e., $\phi^n (x)$ returns infinitely often to any punctured neighbourhood of $x$.

Let $\Z$ act on $\T$ via rotation
	\begin{equation}
	\label{2eq:Rtheta}
\textup{R}_\theta\colon t \mapsto t +\theta \mod 1
	\end{equation}
by an irrational angle $\theta\in\T$.

\begin{theorem}\label{t:representations}
For each irrational $\theta \in\T$ there is a bijection between the set of
ergodic $\textup{R}_\theta$-quasi-invariant probability measures on $\T$ and the set of irreducible representations $\pi$ of $\h$
with $\pi(z)=e^{2\pi i \theta}$.
\end{theorem}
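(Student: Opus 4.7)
I would prove the bijection by constructing maps in both directions and checking they are mutual inverses, up to equivalence of measures on one side and unitary equivalence of representations on the other.

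\textit{From a measure to a representation.} Given an ergodic $R_\theta$-quasi-invariant probability measure $\mu$ on $\T$, I define operators on $L^2(\T,\mu)$ by
\[
(\pi_\mu(y)f)(t) = e^{2\pi i t}f(t), \quad (\pi_\mu(x)f)(t) = \rho_\mu(t)^{1/2}\,f(R_\theta t), \quad \pi_\mu(z) = e^{2\pi i\theta}I,
\]
where $\rho_\mu = d(\mu\circ R_\theta)/d\mu$. Quasi-invariance makes $\rho_\mu$ well defined and forces unitarity of $\pi_\mu(x)$ via the Radon--Nikodym identity, and a direct computation gives $\pi_\mu(x)\pi_\mu(y) = e^{2\pi i\theta}\pi_\mu(y)\pi_\mu(x)$, so $\pi_\mu$ extends by the Heisenberg relations \eqref{2eq:relations} to a unitary representation of $\h$. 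Irreducibility reduces to ergodicity: any closed $\pi_\mu(y)$-invariant subspace is in fact invariant under multiplication by every bounded Borel function of $t$, hence of the form $\mathbf{1}_B L^2(\T,\mu)$; $\pi_\mu(x)$-invariance then forces $R_\theta^{-1}B=B$ modulo $\mu$, and ergodicity gives $\mu(B)\in\{0,1\}$.

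\textit{From a representation to a measure.} If $\pi$ is irreducible with $\pi(z)=e^{2\pi i\theta}I$, I apply the spectral theorem to the unitary $\pi(y)$ to obtain a projection-valued measure $E$ on $\T$ with $\pi(y)=\int_\T e^{2\pi it}\,dE(t)$. Conjugating by $\pi(x)$ and using \eqref{2eq:relations} yields the system of imprimitivity
\[
\pi(x)E(B)\pi(x)^{-1} = E(R_\theta^{-1}B), \qquad B\subseteq \T \text{ Borel}.
\]
I then take $\mu$ to represent the maximal spectral type of $E$, so that $\mu(B)=0$ if and only if $E(B)=0$; quasi-invariance of $\mu$ under $R_\theta$ is immediate from the displayed identity. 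Ergodicity is a consequence of Schur's lemma: for $R_\theta$-invariant $B$, the projection $E(B)$ commutes with $\pi(y)$ (trivially) and with $\pi(x)$ (by the imprimitivity relation), hence lies in $\pi(\h)' = \C I$, so $\mu(B)\in\{0,1\}$.

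\textit{Matching and main obstacle.} To close the bijection I realise $\mathcal H_\pi$ as a Hahn--Hellinger direct integral $\int_\T^\oplus \mathcal H_t\,d\mu(t)$ in which $\pi(y)$ acts by multiplication by $e^{2\pi it}$, and check that $\pi(x)$ is then necessarily the twisted shift of the first step. The main obstacle is the multiplicity: the imprimitivity relation only forces the fibre dimension $n(t)=\dim \mathcal H_t$ to be $R_\theta$-invariant and hence $\mu$-a.e.\ equal to a constant $n$, and the operator $\pi(x)$ decomposes as a measurable field of fibre unitaries $U_t\colon\mathcal H_t\to\mathcal H_{R_\theta t}$. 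Ruling out $n\geq 2$ is the delicate step: any measurable field $A_t\in\mathcal B(\mathcal H_t)$ satisfying the equivariance $A_{R_\theta t}=U_tA_tU_t^{-1}$ would lie in $\pi(\h)'$, and one must use ergodicity of $R_\theta$ on $(\T,\mu)$ together with Schur's lemma to show that no non-scalar such field exists, forcing $n=1$. Once multiplicity one is established, the unitary identification $\mathcal H_\pi \cong L^2(\T,\mu)$ intertwines $\pi$ with $\pi_\mu$ (the Radon--Nikodym factor being canonical up to a phase), and running the two constructions against each other produces the identity in each direction, giving the announced bijection.
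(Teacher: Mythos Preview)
Your approach via the spectral theorem, the imprimitivity relation $\pi(x)E(B)\pi(x)^{-1}=E(R_\theta^{-1}B)$, and the Hahn--Hellinger decomposition is genuinely different from the paper's route. The paper fixes a cyclic unit vector $v$, applies Herglotz/Bochner to the sequence $\langle\pi(y)^nv,v\rangle$ to produce a measure $\mu_v$, shows directly that $\mu_v$ and $\mu_{\pi(x)v}$ are equivalent (hence $\mu_v$ is $R_\theta$-quasi-invariant), and then argues that the resulting model on $L^2(\T,\mu_v)$ recovers $\pi$. Your framework is the Mackey machine for the crossed product $C(\T)\rtimes_{R_\theta}\Z$; the paper's is a bare-hands GNS/spectral argument that avoids direct integrals.

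Both approaches, however, hit the same wall, and your proposed resolution of it does not work as stated. You claim that Schur's lemma plus ergodicity forces the fibre dimension $n$ to equal $1$, because any equivariant field $A_t$ lies in $\pi(\h)'=\C I$. But this implication runs the wrong way. Irreducibility tells you that equivariant fields are scalar; it does \emph{not} tell you that $n=1$. A priori nothing you have written excludes $n\ge2$ together with a unitary cocycle $(U_t)$ into $U(n)$ for which the only measurable solutions of $A_{R_\theta t}=U_tA_tU_t^{-1}$ are scalar multiples of the identity---in that case the representation would be irreducible with $\pi(y)$ of uniform multiplicity $n$, and it could not be unitarily equivalent to any $\pi_\mu$. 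What actually rules this out is the \emph{freeness} of the rotation (irrationality of $\theta$), which you have not invoked at this step: because $R_\theta$ has no periodic points, $C(\T)$ sits inside $\mathcal R_\theta=C(\T)\rtimes_{R_\theta}\Z$ with the unique pure-state extension property, and this forces $\pi(C(\T))''$ to be maximal abelian in $\mathcal B(\mathcal H_\pi)$ for every irreducible $\pi$. Only then does the Hahn--Hellinger multiplicity collapse to $1$.

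For comparison, the paper's argument is sketchy at exactly the same point: it passes from the cyclic subspace $\mathcal H_v=\overline{\operatorname{span}}\{\pi(y)^nv:n\in\Z\}$ to the assertion that $\pi$ is unitarily equivalent to a representation on $L^2(\T,\mu_v)$, without ever verifying that $\mathcal H_v=\mathcal H_\pi$---which is precisely the multiplicity-one statement in disguise. So your proposal is at the same level of rigor as the paper's own proof, only more explicit about where the real difficulty sits; to complete either argument one must use that the $\Z$-action is free.
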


We use the measures found in \cite{2Keane} to construct unitary irreducible representations of $\h$.
Suppose $\mu$ is an ergodic $\textup{R}_\theta$-quasi-invariant probability measure on $\T$.
Let $\textup{T}_{\theta ,\mu }\colon L^2(\T,\mu) \longrightarrow L^2(\T,\mu)$ be the unitary operator defined by
	\begin{equation}
	\label{2eq:T-theta-mu}
(\textup{T}_{\theta ,\mu }F)(t) = \sqrt{\frac{d\mu(t+\theta)}{d\mu(t)}} F(t+\theta) = \sqrt{\frac{d\mu(\textup{R}_{\theta} t)}{d\mu(t)}} F(\textup{R}_{\theta} t)\,,
	\end{equation}
for every $F \in L^2(\T,\mu)$ and $t \in \T$. The operator $\textup{T}_{\theta ,\mu }$ is well-defined because of the quasi-invariance of $\mu$. Consider also the unitary operator $\textup{M}_\mu $ defined by
	\begin{equation}
	\label{2eq:M-mu u}
(\textup{M}_\mu F)(t) = e^{2 \pi i t} F(t) \,,
	\end{equation}
for every $F \in L^2(\T,\mu)$ and $t \in \T$.

We will show that the representation $\pi _{\theta ,\mu }$ of $\h$ defined by
\begin{equation}\label{2eq:pimu}
\pi_{\theta ,\mu }(x)\coloneqq \textup{T}_{\theta ,\mu } \,,\quad \quad\pi_{\theta ,\mu }(y)\coloneqq \textup{M}_\mu \quad\text{and}\quad \pi_{\theta ,\mu }(z)\coloneqq e^{2\pi i \theta}
\end{equation}
is irreducible.
Obviously, $ \textup{T}_{\theta ,\mu } \textup{M}_\mu = e^{2 \pi i \theta} \textup{M}_\mu \textup{T}_{\theta ,\mu }  = \pi_{\theta ,\mu } (z) \textup{M}_\mu \textup{T}_{\theta ,\mu }$.
\begin{lemma}\label{2lemma:irreducible}
The unitary representation $\pi_{\theta ,\mu }$ of $\h$ given by \eqref{2eq:pimu} is irreducible.
\end{lemma}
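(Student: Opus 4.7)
The plan is to verify irreducibility by Schur's Lemma: I will show that any bounded operator $A \in \mathcal{B}(L^2(\T,\mu))$ that commutes with both $\pi_{\theta,\mu}(x) = \textup{T}_{\theta,\mu}$ and $\pi_{\theta,\mu}(y) = \textup{M}_\mu$ must be a scalar multiple of the identity. (The commutation with $\pi_{\theta,\mu}(z)$ is automatic since it is already scalar.)

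First I would analyse the commutant of $\textup{M}_\mu$. Note that $\textup{M}_\mu^n = \textup{M}_{e^{2\pi int}}$ for all $n\in\Z$, so an operator commuting with $\textup{M}_\mu$ commutes with multiplication by every trigonometric polynomial, and by a density/approximation argument with multiplication by every $f \in C(\T)$, and finally with multiplication by every $f \in L^\infty(\T,\mu)$. Since the multiplication algebra $\{M_g : g \in L^\infty(\T,\mu)\}$ is a maximal abelian $^\ast$-subalgebra of $\mathcal{B}(L^2(\T,\mu))$ (this is the classical fact that $L^\infty(\T,\mu)$ is its own commutant acting on $L^2$), it follows that $A = M_g$ for some $g \in L^\infty(\T,\mu)$.

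Next I would exploit the commutation with $\textup{T}_{\theta,\mu}$. Writing $J(t) = \sqrt{d\mu(\textup{R}_\theta t)/d\mu(t)}$ for the Radon--Nikodym cocycle, a direct computation gives
\begin{equation*}
(\textup{T}_{\theta,\mu} M_g F)(t) = J(t)\, g(\textup{R}_\theta t)\, F(\textup{R}_\theta t), \qquad (M_g \textup{T}_{\theta,\mu} F)(t) = g(t)\, J(t)\, F(\textup{R}_\theta t).
\end{equation*}
The equality $\textup{T}_{\theta,\mu} M_g = M_g \textup{T}_{\theta,\mu}$ therefore forces $g \circ \textup{R}_\theta = g$ $\mu$-almost everywhere. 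By the assumed $\textup{R}_\theta$-ergodicity of $\mu$, the function $g$ is $\mu$-a.e.\ equal to a constant $c \in \C$, whence $A = c \cdot \mathrm{Id}$. Schur's Lemma then yields the irreducibility of $\pi_{\theta,\mu}$.

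The main technical point is the first step, namely that the commutant of $\textup{M}_\mu$ is exactly the algebra of multiplication operators; the rest is a one-line cocycle calculation plus the ergodicity hypothesis. I would either cite the maximal abelian property of $L^\infty(\T,\mu) \subset \mathcal{B}(L^2(\T,\mu))$ as a standard fact, or give a brief self-contained argument using that the spectral measure of $\textup{M}_\mu$ generates the Borel $\sigma$-algebra of $\T$.
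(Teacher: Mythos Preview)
Your proposal is correct and follows essentially the same route as the paper: Schur's Lemma, identification of the commutant of $\textup{M}_\mu$ with the multiplication operators by $L^\infty$ functions, and then ergodicity of $\mu$ under $\textup{R}_\theta$ to force the multiplier to be constant. The only cosmetic difference is that you invoke the maximal abelian property of $L^\infty(\T,\mu)\subset\mathcal{B}(L^2(\T,\mu))$ as a standard fact, whereas the paper writes out the elementary argument that $\textup{O}H=H\cdot(\textup{O}1)$ and checks the resulting multiplier lies in $L^\infty$; your explicit cocycle computation for the commutation with $\textup{T}_{\theta,\mu}$ is likewise just a spelled-out version of what the paper asserts in one line.
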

\begin{proof}
Every element in $L^2(\T,\mu)$ can be approximated
by linear combinations of elements in the set
\begin{equation*}\{\textup{M}_\mu ^n1\,:\,n\in\Z\}=\{t \mapsto e^{2 \pi i n t}\,:\,n\in\Z \}\,.\end{equation*}
A bounded linear operator $\textup{O}$ on $L^2(\T,\mu)$, which commutes with all operators of the form $\textup{M}_\mu ^n$, $n \in\Z$, and hence with multiplication with any $L^\infty$-function, must be a multiplication operator, i.e.,
$\textup{O} F(t)= G(t)\cdot F(t)$ for some $G \in L^\infty(\T,\mu)$. Indeed, if $\textup{O}$ commutes with multiplication by $H \in L^\infty (\T,\mu)$, then
\begin{equation*}\textup{O} H=H\cdot \textup{O}1=HG\,,\end{equation*} say. Denote by $\|\cdot\|_{op}$ the operator norm, then
\begin{equation}\label{2inequality:op}
	\|HG\|_{L^2(\T,\mu)} = \|\textup{O} H\|_{L^2(\T,\mu)} \leq \|\textup{O}\|_{op}\|H\|_{L^2(\T\mu)} \,,
\end{equation}
which implies that $G \in L^\infty(\T,\mu)$ (otherwise one would be able to find a measurable set $B$ with positive measure on which $G$ is strictly larger than $\|\textup{O}\|_{op}$, and the indicator function $1_B$ would lead to a contradiction with \eqref{2inequality:op}).

 The ergodicity of $\mu$ with respect to $\textup{R}_\theta$ implies that only  constant functions in $L^\infty(\T,\mu)$ are $\textup{R}_\theta$-invariant $\mu$-a.e.. Hence, if $\textup{O}$ commutes with $\textup{T}_{\theta ,\mu }$ as well, then we can conclude that $\textup{O}$ is multiplication by a constant $c\in\C$.
By Schur's Lemma, the operators $\textup{T}_{\theta ,\mu },\textup{M}_\mu\in \mathcal B (L^2(\T,\mu))$ define an irreducible representation $\pi_{\theta ,\mu }$ of $\h$.
\end{proof}

Suppose that $\theta \in \T$ is irrational, and that $\mu$ and $\nu$ are two ergodic $\textup{R}_\theta $-quasi-invariant measures on $\T$.
Let $\pi_{\theta ,\mu }$ and $ \pi_{\theta ,\nu }$
be the corresponding irreducible unitary representations constructed above.

\begin{lemma}  \label{lemma:lemma2}
The representations $\pi_{\theta ,\mu }$ and $\pi_{\theta ,\nu }$ are unitarily equivalent if and only if $\mu$ and $\nu$ are equivalent.
\end{lemma}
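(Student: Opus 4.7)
For the \emph{if} direction, the plan is to write down an explicit intertwining unitary. Assume $\mu \sim \nu$ and set $h = \tfrac{d\mu}{d\nu}$, which is strictly positive $\nu$-a.e. Define
\begin{equation*}
W \colon L^2(\T,\mu)\longrightarrow L^2(\T,\nu),\qquad (WF)(t) = \sqrt{h(t)}\, F(t).
\end{equation*}
A one-line change-of-variables shows $W$ is an isometry, and surjectivity follows by the same formula with $1/h$. That $W$ intertwines $\textup{M}_\mu$ and $\textup{M}_\nu$ is immediate since $W$ commutes with pointwise multiplication by $e^{2\pi i t}$. The intertwining $W\textup{T}_{\theta ,\mu } = \textup{T}_{\theta ,\nu }W$ reduces, after squaring, to the cocycle identity
\begin{equation*}
h(t)\cdot \frac{d\mu\circ \textup{R}_\theta}{d\mu}(t) \;=\; \frac{d\nu\circ \textup{R}_\theta}{d\nu}(t)\cdot h(\textup{R}_\theta t),
\end{equation*}
and both sides equal $\tfrac{d\mu\circ \textup{R}_\theta}{d\nu}(t)$ by the chain rule for Radon--Nikodym derivatives. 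Hence $W$ implements a unitary equivalence between $\pi_{\theta ,\mu }$ and $\pi_{\theta ,\nu }$.

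For the \emph{only if} direction, suppose $U\colon L^2(\T,\mu) \to L^2(\T,\nu)$ is a unitary with $U\pi_{\theta ,\mu }(g) = \pi_{\theta ,\nu }(g) U$ for all $g\in\h$. The strategy is to use the intertwining with $\pi(y)$ to force $U$ itself to be a multiplication operator, and then to read off the relation between $\mu$ and $\nu$. From $U \textup{M}_\mu^n = \textup{M}_\nu^n U$ for every $n\in\Z$, linearity gives intertwining with multiplication by every trigonometric polynomial; by Stone--Weierstrass, together with the estimate $\|\textup{M}_\phi\|_{op}\le \|\phi\|_\infty$, this extends to multiplication by every $\phi\in C(\T)$. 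Let $v \coloneqq U(1_\T) \in L^2(\T,\nu)$. For continuous $\phi$,
\begin{equation*}
U(\phi) = U(\textup{M}_\mu^\phi 1_\T) = \textup{M}_\nu^\phi v = \phi\cdot v,
\end{equation*}
and density of $C(\T)$ in $L^2(\T,\mu)$ lets us extend this (via Cauchy sequences) to $U(F) = F\cdot v$ for every $F\in L^2(\T,\mu)$ in the natural sense.

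Applying the isometry identity $\|U(1_B)\|_{L^2(\nu)}^2 = \|1_B\|_{L^2(\mu)}^2$ to indicator functions of Borel sets $B\subset \T$ then yields $\mu(B) = \int_B |v|^2\, d\nu$, i.e.\
\begin{equation*}
d\mu = |v|^2\, d\nu,
\end{equation*}
so in particular $\mu \ll \nu$. Running the identical argument with the inverse $U^{-1}\colon L^2(\T,\nu)\to L^2(\T,\mu)$ (also a unitary intertwiner) produces $w\in L^2(\T,\mu)$ with $d\nu = |w|^2\, d\mu$, giving $\nu \ll \mu$. Hence $\mu$ and $\nu$ are equivalent.

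The step I expect to require the most care is the passage from ``$U$ intertwines multiplication by continuous functions'' to the pointwise representation $U(F)= v F$, because $v$ need not be bounded, so the formal ``multiplication by $v$'' is only densely defined on $L^2(\T,\nu)$; one has to use that $U$ is bounded together with $L^2$-approximation to make this precise. It is worth noting that the intertwining with $\pi(x)=\textup{T}_{\theta,\mu}$ plays no role in the reverse direction --- already the intertwining with the centre and with $\pi(y)$ forces $\mu\sim\nu$, while the intertwining with $\pi(x)$ would additionally pin down the phase of $v$ along orbits of $\textup{R}_\theta$.
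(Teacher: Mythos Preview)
Your proposal is correct and follows essentially the same route as the paper. In the \emph{if} direction you write down exactly the same intertwiner $W F = \sqrt{d\mu/d\nu}\,F$ (and in fact give more detail than the paper on why it intertwines $\textup{T}_{\theta,\mu}$ with $\textup{T}_{\theta,\nu}$). In the \emph{only if} direction the paper, like you, extends the intertwining from powers of $\textup{M}$ to multiplication by arbitrary continuous functions and then reads off the relation $d\mu = |U(1)|^2\,d\nu$; the only cosmetic difference is that the paper stops at the isometry identity $\int |H|^2\,d\mu = \int |H|^2\,|U(1)|^2\,d\nu$ for continuous $H$ and invokes the Riesz representation theorem directly, rather than first extending $U(F)=vF$ to all of $L^2$ and then testing on indicator functions --- this sidesteps exactly the unboundedness issue you flagged, but the content is the same.
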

\begin{proof}
Assume $\pi_{\theta ,\mu }$ and $\pi_{\theta ,\nu }$ are unitarily equivalent. Then there exists a unitary operator  $\textup{U}\colon L^2(\T,\mu) \longrightarrow L^2(\T,\nu)$ such that
\begin{equation}\label{2unitaryequi}
\textup{U} \pi_{\theta ,\mu }(\gamma)= \pi_{\theta ,\nu }(\gamma)\textup{U}
\end{equation}
for every $\gamma \in \h$.

Denote multiplication by a function $H\in C(\mathbb{T},\mathbb{C})$ by $\textup{O}_H$. The set of trigonometric polynomials, which is spanned by $\{\textup{M}_\mu ^n 1\,:\,n\in\Z\}$, is dense in $C(\T,\C)$. This implies
that \eqref{2unitaryequi} holds for all $H\in C(\T,\C)$, i.e., that $\textup{U} \textup{O}_H = \textup{O}_H  \textup{U}$ for any $H \in C(\T,\C)$.

Since $\textup{U}$ is an isometry we get that
\begin{align}\label{2riesz}
\int |H|^2 1^2 d\mu &=
\langle \textup{O}_H 1 , \textup{O}_H 1 \rangle_\mu
\\ &=
\langle  \textup{O}_H \textup{U}(1) ,  \textup{O}_H \textup{U}(1) \rangle_\nu
\\& = \int |H|^2 |\textup{U}(1)|^2 d\nu  \,,
\end{align}
where $\langle \cdot,\cdot\rangle_\sigma$ is the standard inner product on the Hilbert space $L^2 (\T,\sigma)$.
Using the same argument for $\textup{U}^{-1}$ we get, for every $H\in C(\mathbb{T},\mathbb{C})$,
\begin{equation}\label{2riesz2}
\int |H|^2 1^2 d\nu = \int |H|^2 |\textup{U}^{-1}(1)|^2 d\mu\,.
\end{equation}
Define, for every positive finite measure $\sigma$ on $\T$, a
linear
functional
\begin{equation*}
I_\sigma\colon C(\T,\C)\longrightarrow \C \quad \text{by}\quad I_\sigma(H)=\int H \,d\sigma\,.
\end{equation*}
 Since $I_\mu(H)=I_{|\textup{U}(1)|^2\nu}(H)$ and $I_\nu(H)=I_{|\textup{U}^{-1}(1)|^2\mu}(H)$ for all positive continuous functions $H$ by \eqref{2riesz} -- \eqref{2riesz2},  we conclude from the Riesz representation theorem that $\mu$ and $\nu$ are equivalent.

Conversely, if $\mu $ and $\nu $ are equivalent, then the
linear
operator
	\begin{displaymath}
	\textup{U}\colon L^2(\mathbb{T},\mu )\longrightarrow L^2(\mathbb{T},\nu )
	\quad
	\text{given by}
	\quad
\textup{U}F=\sqrt\frac{d\mu }{d\nu }F
	\end{displaymath}
for every $F\in L^2(\mathbb{T},\mu )$, is unitary and satisfies that $\textup{U}\pi _{\theta ,\mu } (\gamma )=\pi _{\theta ,\nu } (\gamma )\textup{U}$ for every $\gamma \in \h$.
\end{proof}

In this way one obtains uncountably many inequivalent irreducible unitary representation of $\h$ for a given irrational rotation number $\theta \in \mathbb{T}$.

In fact,  every irreducible unitary representation
$\pi$ of $\h$ with $\pi (z)=e^{2\pi i \theta}$, $\theta$ irrational, is unitarily equivalent to $\pi _{\theta ,\mu }$ for some probability measure $\mu $ on $\mathbb{T}$ which is quasi-invariant and ergodic with respect to an irrational circle rotation.
For convenience of the reader we sketch a proof of this fact based on elementary spectral theory of unitary operators.

Let $\pi$ be an irreducible unitary representation of $\h$ with representation space $\mathcal H_\pi$ such that $\pi(x)\pi(y)=e^{2 \pi i \theta}\pi(y)\pi(x)$. Let $v\in \mathcal H_\pi$ be a normalised cyclic vector, put $\textup{U}=\pi(y)$ and denote by $\mathcal H _v$ the closure of the subspace generated by $\{\textup U ^n v \,\colon \, n\in\Z \}$. The GNS-construction tells us that $a_n=\langle \textup U ^n v,v\rangle_{\mathcal H _\pi}$, $n\in \Z$, forms a positive-definite sequence. Due to Herglotz' (or, more generally, Bochner's) representation theorem there exists a probability measure $\mu_v$ on $\widehat \Z \simeq \T$ whose Fourier-Stieltjes transform $\widehat{\mu _v}$ fulfils
\begin{equation*}
\widehat{\mu _v}(n)=  \int_{\T} e^{- 2 \pi i n t} d \mu_v (t) = a_n \quad \text{for every $n \in \Z$.}
\end{equation*}
One easily verifies that there exists an isometric isomorphism
$\phi \colon \mathcal H _v \longrightarrow L^2 (\T,\mu_v)$ which intertwines the restriction $\textup U _v$ of $\textup {U}$ to $\mathcal H_v$ with the modulation operator $\textup M_v$ on $L^2 (\T,\mu_v)$ consisting of multiplication by $e^{ 2 \pi i  t}$. In other words, the unitary operators $\textup U _v$ and $\textup{M}_v$ are unitarily equivalent.

Put $\textup S =\pi(x)$ and consider the cyclic normalised vector $w=\textup S v$ of the representation $\pi$. By replacing $v$ by $w$ in the construction  above one can define the corresponding objects $\mathcal H _w, \textup U _w, \mu_w, L^2 (\T,\mu_w), \textup M_w$.

\begin{lemma}
The measures $\mu_v$ and $\mu_w$ are equivalent.
\end{lemma}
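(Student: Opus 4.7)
The plan is as follows. Starting from the commutation relation $\pi(x)\pi(y) = e^{2\pi i\theta}\pi(y)\pi(x)$, one immediately derives $\textup{S}^{-1}\textup{U}^n\textup{S} = e^{-2\pi in\theta}\textup{U}^n$ for every $n \in \Z$, so that
\[
\langle \textup{U}^n w, w\rangle = \langle \textup{S}^{-1}\textup{U}^n\textup{S}v, v\rangle = e^{-2\pi in\theta}\langle \textup{U}^n v, v\rangle.
\]
By the uniqueness part of the Herglotz--Bochner theorem this identifies $\mu_w$ as the push-forward of $\mu_v$ under the rotation $\textup{R}_\theta$, so $\mu_w(B) = \mu_v(B - \theta)$ for every Borel $B \subset \T$. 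In particular $\mu_w$ and $\mu_v$ differ only by a translation of the circle, and the content of the lemma is that this translation preserves null sets.

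To promote the rotation identity to an equivalence of measure classes I would use irreducibility of $\pi$ through its spectral projections. Writing $E$ for the spectral resolution of $\textup{U}$ on $\mathcal{H}_\pi$, the relation $\textup{S}\textup{U}\textup{S}^{-1} = e^{2\pi i\theta}\textup{U}$ translates to $\textup{S}E(B)\textup{S}^{-1} = E(B - \theta)$. Consequently, for any Borel $\textup{R}_\theta$-invariant $A \subset \T$ the projection $E(A)$ commutes with $\pi(x)$, $\pi(y)$ and (trivially) with the scalar $\pi(z) = e^{2\pi i\theta}I$; it therefore commutes with the whole representation, and Schur's lemma yields $E(A) \in \{0,I\}$. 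This says that the maximal spectral type $[\nu]$ of $\textup{U}$ on $\mathcal{H}_\pi$ is both $\textup{R}_\theta$-quasi-invariant and $\textup{R}_\theta$-ergodic.

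Finally, I would combine this with the hypothesis that $v$ is cyclic for the entire representation $\pi$ to argue that $\mu_v$ realises the maximal spectral type. If the essential support of $\mu_v$ were missing a $[\nu]$-positive $\textup{R}_\theta$-invariant set $A$, then the $\pi(\h)$-orbit of $v$ --- built from the vectors $\textup{U}^b\textup{S}^a v$ --- would remain inside the proper subspace $E(A^c)\mathcal{H}_\pi$ (since $\textup{S}$ preserves $A^c$ up to null sets, by invariance of $A$), contradicting cyclicity together with the dichotomy $E(A) \in \{0,I\}$. Hence $\mu_v \sim [\nu]$, and since $[\nu]$ is invariant as a measure class under $\textup{R}_\theta$ one obtains $\mu_w = (\textup{R}_\theta)_*\mu_v \sim [\nu] \sim \mu_v$. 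The delicate step is precisely this last one --- extracting maximality of $\mu_v$ out of $v$'s cyclicity for the group rather than for $\textup{U}$ --- and it is where the interplay between Schur's lemma, ergodicity of the spectral type, and the minimality of the irrational rotation must be brought to bear.
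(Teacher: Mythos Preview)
Your argument breaks down precisely where you flag it: from the dichotomy $E(A)\in\{0,I\}$ for $\textup{R}_\theta$-invariant $A$ you correctly conclude that $\mu_v$ and the maximal spectral type $[\nu]$ agree on $\textup{R}_\theta$-invariant Borel sets, but this does \emph{not} give $\mu_v\sim[\nu]$. Cyclicity of $v$ for $\pi(\h)$ only guarantees that the translates $(\textup{R}_\theta^a)_*\mu_v$ jointly exhaust $[\nu]$, not that $\mu_v$ alone does; the zero set of $d\mu_v/d\nu$ has no reason to be $\textup{R}_\theta$-invariant. In fact the gap is unfillable under the stated hypotheses: take $\pi=\pi_{\theta,\lambda}$ (irreducible, so every nonzero vector is cyclic) and $v=\sqrt{2}\cdot 1_{[0,1/2]}$. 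Then $\mu_v$ and $\mu_w$ are both absolutely continuous with respect to $\lambda$, but their essential supports are half-intervals that are rotates of one another by $\theta$, hence inequivalent. What is really needed --- and what the ``Completion of the proof'' tacitly uses --- is that $v$ be cyclic for $\textup{U}=\pi(y)$ alone, so that $\mathcal H_v=\mathcal H_\pi$ and $\mu_v$ \emph{is} the maximal spectral type; under that stronger hypothesis your Schur argument does yield quasi-invariance and ergodicity of $\mu_v$ directly.

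For comparison, the paper argues by first asserting that $\textup{U}_v$ and $\textup{U}_w$ are unitarily equivalent and then, as in Lemma~\ref{lemma:lemma2}, showing that the intertwiner must be a multiplication operator. But for multiplicity-one unitaries, $\textup{U}_v\cong\textup{U}_w$ is \emph{equivalent} to $\mu_v\sim\mu_w$, so that opening assertion already presupposes the conclusion (the unitary $\textup{S}$ only gives $\textup{U}_v\cong e^{2\pi i\theta}\textup{U}_w$, which after a rotation of the spectral parameter recovers $\mu_w=(\textup{R}_\theta)_*\mu_v$, not equivalence). Both arguments implicitly require $\mathcal H_v=\mathcal H_\pi$.
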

\begin{proof}
First note that $\textup U_v$ and $\textup U_w$ are unitarily equivalent. From this fact and the discussion preceding the lemma one concludes that
$\textup M _v$ and $\textup M _w$ are unitarily equivalent as well, i.e., that there exists a unitary operator $\textup O\colon L^2 (\T,\mu_v) \longrightarrow L^2 (\T,\mu_w)$ such that
$\textup O \textup M _v = \textup M _w \textup O$.
By arguing as in the first part of the proof of Lemma \ref{lemma:lemma2} one gets that $\textup O$ is a multiplication operator. Put $G=\textup O 1_{ L^2 (\T,\mu_v)}$.
Since $\textup O$ is an isometry one gets for all $\mu_v$-measurable sets $B$
\begin{equation*}
\mu_v(B)= \int_{\T} |1_B|^2 d \mu_v = \int_{\T} |G|^2|1_B|^2 d \mu_w \,.
\end{equation*}
By repeating these arguments with $v$ and $w$ interchanged
one concludes that $\mu_v$ and $\mu_w$ are equivalent. 
\end{proof}

\begin{lemma}
The measure $\mu_v$ is $\textup R _\theta $-quasi-invariant.
\end{lemma}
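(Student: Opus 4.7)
The plan is to exploit the Heisenberg commutation relation $\pi(x)\pi(y)=e^{2\pi i\theta}\pi(y)\pi(x)$ to compare the positive-definite sequence attached to $v$ with the one attached to $w=\textup{S}v$, and then to translate the resulting identity of Fourier--Stieltjes transforms into a statement about the push-forward of $\mu_v$ under $\textup{R}_\theta$.

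Concretely, from the commutation relation one has $\textup{S}\textup{U}=e^{2\pi i\theta}\textup{U}\textup{S}$, hence by induction
\begin{equation*}
\textup{U}^n\textup{S}=e^{-2\pi in\theta}\,\textup{S}\textup{U}^n\qquad\text{for every } n\in\mathbb{Z}.
\end{equation*}
Applying this to $v$ and using that $\textup{S}$ is unitary gives
\begin{equation*}
\langle \textup{U}^nw,w\rangle _{\mathcal H_\pi}
=\langle \textup{U}^n\textup{S}v,\textup{S}v\rangle _{\mathcal H_\pi}
=e^{-2\pi in\theta}\langle \textup{U}^nv,v\rangle _{\mathcal H_\pi},
\end{equation*}
i.e.\ $\widehat{\mu _w}(n)=e^{-2\pi in\theta}\,\widehat{\mu _v}(n)$ for every $n\in\mathbb{Z}$. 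A direct computation of the Fourier--Stieltjes transform of the push-forward measure $(\textup{R}_\theta)_*\mu _v$ (defined by $(\textup{R}_\theta)_*\mu _v(B)=\mu _v(\textup{R}_\theta^{-1}B)$) shows that
\begin{equation*}
\widehat{(\textup{R}_\theta)_*\mu _v}(n)=\int _\mathbb{T}e^{-2\pi in(s+\theta )}\,d\mu _v(s)=e^{-2\pi in\theta }\,\widehat{\mu _v}(n),
\end{equation*}
so by the uniqueness of the Fourier--Stieltjes transform on $\mathbb{T}$ I conclude $\mu _w=(\textup{R}_\theta)_*\mu _v$.

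Combining this with the previous lemma, which asserts $\mu _v$ and $\mu _w$ are equivalent, yields $\mu _v\sim (\textup{R}_\theta)_*\mu _v$. Unravelling this equivalence, $\mu _v(B)=0$ if and only if $(\textup{R}_\theta)_*\mu _v(B)=\mu _v(\textup{R}_\theta^{-1}B)=0$; replacing $B$ by $\textup{R}_\theta B$ gives the quasi-invariance criterion $\mu _v(B)=0\Longleftrightarrow \mu _v(\textup{R}_\theta B)=0$, which is exactly $\textup{R}_\theta$-quasi-invariance.

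I expect the only mildly delicate step to be the bookkeeping with conventions: one must be sure that the sign in the exponential produced by the commutation relation matches the sign in the exponential appearing in the Fourier--Stieltjes transform, so that the identity $\widehat{\mu _w}(n)=\widehat{(\textup{R}_\theta)_*\mu _v}(n)$ really identifies $\mu _w$ with the push-forward of $\mu _v$ (and not with $(\textup{R}_{-\theta})_*\mu _v$). Since quasi-invariance under $\textup{R}_\theta$ and under $\textup{R}_{-\theta}$ are equivalent, even this sign ambiguity does not affect the final conclusion, and no further ingredient beyond Herglotz' theorem and the previous lemma is needed.
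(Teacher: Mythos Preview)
Your proof is correct and follows essentially the same route as the paper: compute $\widehat{\mu_w}(n)=e^{-2\pi in\theta}\widehat{\mu_v}(n)$ from the commutation relation, identify this as the Fourier--Stieltjes transform of $\mu_v\circ \textup{R}_\theta$, and combine with the equivalence $\mu_v\sim\mu_w$ from the preceding lemma. Your write-up is in fact more explicit than the paper's (which leaves the Fourier--Stieltjes identification as an easy verification), and your remark that a sign discrepancy would be harmless since quasi-invariance under $\textup{R}_\theta$ and $\textup{R}_{-\theta}$ coincide is a nice robustness observation.
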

\begin{proof}
 Note that $\widehat {\mu_w}(n)=\langle \textup{S} ^{-1}\textup U ^n \textup S v,v\rangle_{\mathcal H _\pi} = e^{-2 \pi i \theta n} \widehat {\mu_v}(n)$ for every $n\in \mathbb{Z}$. As one can easily verify, for every probability measure $\mu$ on $\T$,
multiplying $\widehat \mu$ with a character $e^{-2 \pi i \theta n}$ is the same as the Fourier-Stieltjes transform of $\mu \circ \textup R _\theta$. Hence, we obtain that $\mu_w=\mu_v \circ \textup R _\theta$. As $\mu_v$ and $\mu_w$ are equivalent, $\mu_v$ is a $\textup R _\theta $-quasi-invariant probability measure on $\T$.
	\end{proof}

\begin{proof}[Completion of the proof of Theorem \ref{t:representations}]
The preceding discussion allows us to define an irreducible representation $\pi_v$ of $\h$ acting on
$ L^2 (\T,\mu_v)$ which is unitarily equivalent to $\pi$. The evaluation of $\pi_v$ at $y$ is given by $\textup M_v$ and  $\textup T_v= \pi_v(x)$ acts as composition of a translation operator by an angle $\theta$ and multiplication by some function $D_v\in L^\infty (\T,\mu_v)$. Due to the fact that  $\textup T_v$ has to be a unitary (and hence, an isometric) operator on $ L^2 (\T,\mu_v)$  the form of $D_v$ is fully determinted (cf. the definition of $\textup T _{\theta,\mu}$ in \eqref{2eq:T-theta-mu} for a $\textup R _\theta$-quasi-invariant measure $\mu$).  Since $\pi_v$ is irreducible only those multiplication  operators in $\mathcal B ( L^2 (\T,\mu_v))$  which act via multiplication by a constant function $c\in \C$ will commute with multiples of   the modified translation operator $\textup T_v$. This implies the ergodicity of $\mu_v$ and completes the proof of the theorem.
\end{proof}

\subsection{Wiener's Lemma for the discrete Heisenberg group}

Theorem \ref{2t:neumark} states that in order to decide on invertibility of $f \in \ell^1 (\h,\C)$,
one has to check invertibility of $\pi(f)$ for every irreducible representations $\pi$ of $\h$,
and hence for every $\pi_{\theta ,\mu }$ as above, where $\mu $ is a probability measure on $\mathbb{T}$ on $\mathbb{T}$
 which is quasi-invariant and ergodic with respect to a circle rotation $R_\theta $.

The problem of deciding on invertibility of $f\in \lh$ via Theorem \ref{2t:neumark} becomes much more straightforward if one is able to restrict oneself to unitary representations arising from \emph{rotation invariant} probability measures.
This is exactly our main result.

Before formulating this result we write down the relevant representations explicitly.
Take $\theta\in\mathbb T$, and consider the corresponding rotation $\textup{R}_\theta\colon \T\longrightarrow\T$ given by \eqref{2eq:Rtheta}. If $\theta $ is irrational, the Lebesgue measure $\lambda =\lambda _\mathbb{T}$ on $\mathbb{T}$ is the unique $\textup{R}_\theta $-invariant probability measure, and the representation $\pi _{\theta ,\lambda }$ on $L^2(\mathbb{T},\lambda )$ defined in \eqref{2eq:pimu} is irreducible. One can modify this representation by setting, for every $s,t\in \mathbb{T}$,
	\begin{equation}
	\label{2eq:pi_theta_s0}
\pi _{\theta }^{(s,t)}(x) = e^{2\pi is}\pi _{\theta ,\lambda }(x),\quad \pi _{\theta  }^{(s,t)}(y) =e^{2\pi it}\pi _{\theta ,\lambda }(y),
\quad \pi _{\theta }^{(s,t)}(z)=e^{2\pi i\theta } \,.
	\end{equation}
Then $\pi _{\theta  }^{(s,t)}$ is obviously again an irreducible unitary representation of $\h$ on $\mathcal H_{\pi^{(s,t)}_\theta}= L^2(\mathbb{T},\lambda )$.

If $\theta $ is rational we write it as $\theta =p/q$ where $p,q$ are integers with the properties $0\le p <q$ and $\gcd(p,q)=1$ and consider the unitary representation $\pi_\theta ^{(s,t)}$ of $\h$ on $\mathcal H_{\pi^{(s,t)}_\theta}=\mathbb{C}^q$ given by
	\begin{align}
	\label{2eq:pi_theta_s2}
\pi _\theta ^{(s,t)}(x) &= e^{2\pi i s}
	\begin{pmatrix}
	0	& I_{q-1} \\
	1	& 0
	\end{pmatrix}\,,
	\\ \label{2eq:pi_theta_s22}
	 \pi _\theta ^{(s,t)}(y) &= e^{2\pi i t}
	\left(\begin{smallmatrix}
	1	& 0	& \dots	 & 0& 0      \\
	0	& e^{2\pi i\theta }&\dots 	&0  & 0 \\
	\vdots	& \vdots 	& \ddots &\vdots & \vdots \\
	0 &0& \ldots &e^{2\pi i(q-2)\theta }&0\\
	0 	& 0& \dots & 0	 & e^{2\pi i(q-1)\theta }
	\end{smallmatrix}\right) \quad \text{and} \quad \pi^{(s,t)}_\theta(z)= e^{2 \pi i \theta}I_q\,,
	\end{align}
with $s,t\in \mathbb{T}$, where $I_{q-1}$ is the $(q-1)\times (q-1)$ identity matrix.
Every $\textup{R}_\theta $-invariant and ergodic probability measure $\mu $ on $\mathbb{T}$ is uniformly distributed on the set $\{t,1/q+t,\dots ,t+(q-1)/q\}\subset \mathbb{T}$ for some $t\in \mathbb{T}$; if we denote this measure by $\mu _t$ then $\mu _t=\mu _{t+k/q}$ for every $k=0,\dots ,q-1$.

With this notation at hand we can state our main result, the proof of which will be given in  Section \ref{2section4}.

	\begin{theorem}
	\label{2t:main}
An element $f\in\ell^1(\h,\C)$ is invertible if and only if the linear operators $\pi_{\theta }^{(s,t)}(f)$ are invertible on the corresponding Hilbert spaces $\mathcal{H}_{\pi_{\theta }^{(s,t)}}$ for every $\theta ,s,t\in\T$.
	\end{theorem}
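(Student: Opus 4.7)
The forward direction is immediate: if $g$ is the $\ell^1$-inverse of $f$, then $\pi_\theta^{(s,t)}(g)$ inverts $\pi_\theta^{(s,t)}(f)$ for every $\theta,s,t$. So the plan is to concentrate on the converse: assume that $\pi_\theta^{(s,t)}(f)$ is invertible for all $\theta,s,t\in\mathbb{T}$ and deduce invertibility of $f$ in $\ell^1(\mathbb{H},\mathbb{C})$.

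The first move I would make is to transfer the problem to the enveloping $C^\ast$-algebra. Since $\mathbb{H}$ is nilpotent, Hulanicki's theorem makes $\ell^1(\mathbb{H},\mathbb{C})$ symmetric, so by Theorem~\ref{2t:Wienerpair} the pair $\bigl(\ell^1(\mathbb{H},\mathbb{C}),C^\ast(\mathbb{H})\bigr)$ is a Wiener pair; hence it suffices to prove that $f$ is invertible in $C^\ast(\mathbb{H})$. I then apply Allan's local principle to $C^\ast(\mathbb{H})$ with the central $C^\ast$-subalgebra $C^\ast(\langle z\rangle)\cong C(\mathbb{T})$, whose maximal ideals are parametrised by $\theta\in\mathbb{T}$ via $z\mapsto e^{2\pi i\theta}$. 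By the universal property of $C^\ast(\mathbb{H})$, the corresponding quotient is the rotation $C^\ast$-algebra $A_\theta$ generated by two unitaries $U,V$ with $UV=e^{2\pi i\theta}VU$. The task therefore reduces to showing that the image $f_\theta$ of $f$ in $A_\theta$ is invertible for every $\theta\in\mathbb{T}$.

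Two subcases then have to be treated. For irrational $\theta$, $A_\theta$ is a simple unital $C^\ast$-algebra, so every non-zero $\ast$-representation is faithful and spectrum-preserving. The representation $\pi_\theta^{(0,0)}=\pi_{\theta,\lambda}$ on $L^2(\mathbb{T},\lambda)$ (the Lebesgue measure $\lambda$ being $\textup{R}_\theta$-invariant is in particular $\textup{R}_\theta$-quasi-invariant and ergodic, so that $\pi_{\theta,\lambda}$ is irreducible by Lemma~\ref{2lemma:irreducible}) is non-zero and therefore faithful, and invertibility of $\pi_\theta^{(0,0)}(f)$ — given in the hypothesis — implies invertibility of $f_\theta$. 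For rational $\theta=p/q$ with $\gcd(p,q)=1$, the relation $UV=e^{2\pi i p/q}VU$ makes $U^q$ and $V^q$ central in $A_{p/q}$, where together they generate a copy of $C(\mathbb{T}^2)$. A second application of Allan's local principle with respect to this subalgebra produces quotients indexed by $(s,t)\in\mathbb{T}^2$ via $U^q\mapsto e^{2\pi i qs}$, $V^q\mapsto e^{2\pi i qt}$; each such quotient is spanned by the $q^2$ elements $\bar U^a\bar V^b$ with $0\le a,b<q$ and is realised as $M_q(\mathbb{C})$ by the representation $\pi_{p/q}^{(s,t)}$ of \eqref{2eq:pi_theta_s2}--\eqref{2eq:pi_theta_s22}, because the clock-and-shift matrices appearing there already span $M_q(\mathbb{C})$. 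Invertibility of each $\pi_{p/q}^{(s,t)}(f)$, again supplied by the hypothesis, then yields invertibility of $f_{p/q}$, completing the reduction.

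The main obstacle in executing this plan is the clean identification of the two successive quotients: that $C^\ast(\mathbb{H})/\overline{C^\ast(\mathbb{H})\cdot m_\theta}$ really coincides with the universal rotation algebra $A_\theta$ (requiring matching universal properties on both sides), and that in the rational case the second quotient is precisely $M_q(\mathbb{C})$ realised by $\pi_{p/q}^{(s,t)}$. The irrational case further rests on the standard but non-trivial simplicity of $A_\theta$. A subsidiary technical point is to verify that Allan's local principle is applicable with the specific unital central $C^\ast$-subalgebras chosen at each stage, and that the Wiener-pair reduction is compatible with these localisations so that the chain of equivalences lifts back from $C^\ast(\mathbb{H})$ to $\ell^1(\mathbb{H},\mathbb{C})$.
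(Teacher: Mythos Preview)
Your proof follows the same architecture as the paper's: reduce from $\ell^1(\h,\C)$ to $C^\ast(\h)$ via the Wiener-pair property, apply Allan's local principle using the central copy of $C(\T)$ generated by $z$, identify the fibres $\mathcal Q_\theta$ as rotation algebras, and for irrational $\theta$ invoke simplicity of $A_\theta$ so that any single irreducible representation (you take $\pi_\theta^{(0,0)}$, the paper takes $\pi_\theta^{(1,1)}$) is faithful and detects invertibility.

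The one genuine difference is in the rational case. The paper handles $\theta=p/q$ by quoting the classification of irreducible representations of $\mathcal Q_\theta$ due to Bratteli--Elliott--Evans--Kishimoto \cite{2BEEK} and then applying Na{\u\i}mark's criterion (Theorem~\ref{2t:neumark}) to the symmetric $C^\ast$-algebra $\mathcal Q_\theta$. You instead run Allan's local principle a second time, now inside $A_{p/q}$ with respect to the central subalgebra $C^\ast(U^q,V^q)\cong C(\T^2)$, and identify each resulting fibre directly with $M_q(\C)$ via the clock-and-shift realisation $\pi_{p/q}^{(s,t)}$. Your route is more self-contained (it avoids the external classification and Na{\u\i}mark's theorem) but requires you to check that $C^\ast(U^q,V^q)$ is indeed the full centre of $A_{p/q}$---or to use a version of Allan's principle valid for an arbitrary closed unital central subalgebra---and that the fibre is exactly $M_q(\C)$; both are standard facts about the rational rotation algebra. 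Either approach is fine, and both feed back into the same final statement.
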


The main advantage of Theorem \ref{2t:main} over Theorem \ref{2t:neumark} is that it is not necessary to check invertibility of $\pi (f)$ for \textit{every} irreducible representation of $\h$, but that one can restrict oneself for this purpose to the `nice' part of the dual of the non-Type I group $\h$. As we shall see later, one can make a further reduction if $\theta $ is irrational: in this case one only has to check invertibility of $\pi _{\theta }(f)= \pi _{\theta  }^{(1,1)}(f)$ on $L^2(\mathbb{T},\lambda )$.

\section{Wiener's Lemma for the discrete Heisenberg group: A proof and a first application}\label{2section4}

In this section we will present one possible proof of Wiener's Lemma for $\ell^1(\h, \C)$ and $C^\ast(\h)$.
This proof has two main ingredients, namely:
\begin{itemize}
\item Allan's local principle, which reduces the problem of verifying invertibility in $\ell^1(\h, \C)$ and $C^\ast(\h)$ to
the study of invertibility in rotation algebras.
\item The fact that irrational rotation algebras are simple will eliminate the `non-Type I problem' for questions about invertibility in $\ell^1(\h, \C)$ and $C^\ast(\h)$ discussed in the previous section.
\end{itemize}
These results will be generalised in Section \ref{2section5}  to group algebras of nilpotent groups.

\subsection{Local principles}
Let $\mathcal{A}$ be a unital Banach algebra and $a \in \mathcal A$.
Local principles are based on the following idea: one checks invertibility of projections of $a$ onto certain quotient algebras of $\mathcal A$ in order to conclude from this information whether $a$ is invertible or not.
Therefore, the main task is to find a sufficient family $\mathfrak J$ of ideals of $\mathcal A$ such that one can deduce the invertibility of $a$ from the invertibility of the projections of $a$ on $\mathcal A/\mathtt I$ for all $\mathtt I \in \mathfrak J$.

Allan's local principle provides us with such a sufficient family of ideals in case the centre of $\mathcal A$ is large enough.
We have used Allan's local principle already in \cite{2GSV} to study invertibility in $\ell^1(\h,\C)$. However, in that paper we were not able to prove Theorem \ref{2t:main} with this approach.

Suppose $\mathcal{A}$ is a unital Banach algebra with non-trivial centre
\begin{equation*}
	\mathcal C (\mathcal{A}) \coloneqq \bigl\{c \in \mathcal A \,:\, cb=bc \,\,\,\textup{for all }b \in \mathcal A\bigr\}\,.
\end{equation*}
The commutative Banach subalgebra $\mathcal C (\mathcal{A})$ is closed and contains the identity $1_{\mathcal A}$.
For every $m \in \textup{Max} (\mathcal C(\mathcal{A}))$ (the space of maximal ideals of $\mathcal{C}(\mathcal{A})$) we denote by $\mathtt{J}_m$ the smallest closed two-sided ideal of $\mathcal{A}$ which contains $m$ and denote by $\Phi_m\colon a\mapsto \Phi _m(a)$ the canonical projection of an element $a\in \mathcal{A}$ to the quotient algebra $\mathcal{A}/\mathtt{J}_m$. The algebra $\mathcal{A}/\mathtt{J}_m$, furnished with the quotient norm
\begin{equation}
	\|\Phi_m (a)\| \coloneqq \inf_{b\in \mathtt J_m} \|a + b\|_{\mathcal A} \,
\end{equation}
becomes then a unital Banach algebra.

\begin{theorem}[\cite{2Allan} Allan's local principle]
An element $a \in \mathcal{A}$ is invertible in $\mathcal{A}$ if and only if $\Phi_m(a)$ is invertible in $\mathcal{A}/\mathtt{J}_m$ for every $m\in \textup{Max} (\mathcal C(\mathcal{A}))$.
\end{theorem}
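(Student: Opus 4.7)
The easy direction is immediate: if $a^{-1} \in \mathcal A$ exists, then $\Phi_m(a^{-1})$ inverts $\Phi_m(a)$ in $\mathcal A/\mathtt J_m$ because $\Phi_m$ is a unital algebra homomorphism (one should first verify that $\mathtt J_m$ is a proper ideal, which follows from the maximality of $m$ in $\mathcal C(\mathcal A)$ together with the fact that $\mathtt J_m \cap \mathcal C(\mathcal A)$ is a proper ideal of $\mathcal C(\mathcal A)$ containing $m$).

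For the hard direction I would assume every $\Phi_m(a)$ is invertible in $\mathcal A/\mathtt J_m$ and construct a global inverse of $a$ by patching local ones. First I would lift each $\Phi_m(a)^{-1}$ to an element $b_m \in \mathcal A$ so that both $ab_m - 1_{\mathcal A}$ and $b_m a - 1_{\mathcal A}$ lie in $\mathtt J_m$. The key observation is then that for any $r \in \mathcal A$ the function $n \mapsto \|\Phi_n(r)\|$ is upper semi-continuous on the compact Gelfand space $\textup{Max}(\mathcal C(\mathcal A))$. Applied to $r = ab_m - 1_{\mathcal A}$, which has $\Phi_m(r) = 0$, this produces an open neighborhood $U_m$ of $m$ on which $\|\Phi_n(ab_m - 1_{\mathcal A})\| < 1/2$, and likewise a neighborhood for $b_m a - 1_{\mathcal A}$.

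Next I would exploit compactness of $\textup{Max}(\mathcal C(\mathcal A))$ to extract a finite subcover $U_{m_1}, \dots, U_{m_N}$, and choose a partition of unity $e_1, \dots, e_N \in \mathcal C(\mathcal A)$ with $\sum_i e_i = 1_{\mathcal A}$ and each Gelfand transform $\hat e_i$ supported in $U_{m_i}$. Setting $b := \sum_i e_i b_{m_i}$ and using centrality of the $e_i$ (so that $\Phi_n(e_i) = \hat e_i(n)\cdot 1_{\mathcal A/\mathtt J_n}$), I obtain
\begin{equation*}
\|\Phi_n(ab - 1_{\mathcal A})\| \le \sum_i |\hat e_i(n)|\,\|\Phi_n(ab_{m_i} - 1_{\mathcal A})\| \le \tfrac{1}{2}
\end{equation*}
for every $n \in \textup{Max}(\mathcal C(\mathcal A))$. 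A norm-capture principle of the form $\|r\|_{\mathcal A} = \sup_n \|\Phi_n(r)\|_{\mathcal A/\mathtt J_n}$ (Dauns--Hofmann in the $C^\ast$ setting) then forces $\|ab - 1_{\mathcal A}\| \le 1/2$, whence $ab$ is invertible in $\mathcal A$ via a Neumann series. Running the dual argument with $b_m a - 1_{\mathcal A}$ gives a left inverse, and hence $a$ itself is invertible.

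The main obstacle will be the patching step: in an abstract unital Banach algebra the Gelfand transform $\mathcal C(\mathcal A) \to C(\textup{Max}(\mathcal C(\mathcal A)))$ need not be surjective, so honest partitions of unity need not lie in $\mathcal C(\mathcal A)$, and the norm-capture identity can fail as well. For the algebras appearing in this paper the relevant centres are rich enough (the ambient algebras are either $C^\ast$-algebras or sit as Wiener-type subalgebras of them), so both ingredients are automatic; in the full Banach-algebraic generality treated by Allan in \cite{2Allan} one replaces the exact partition of unity by an iterative approximate-inverse construction driven by the same upper semi-continuity input.
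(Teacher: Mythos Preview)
The paper does not prove this statement; Allan's local principle is quoted from \cite{2Allan} and used as a tool, so there is no in-paper proof to compare against. Your sketch is a sound outline of the $C^\ast$-algebra version of the argument, and you correctly isolate the two genuine obstacles in the general Banach-algebra setting (central partitions of unity need not exist in $\mathcal C(\mathcal A)$, and the identity $\|r\|=\sup_m\|\Phi_m(r)\|$ can fail); your closing remark that Allan circumvents these by an iterative approximate-inverse scheme driven by the same upper semi-continuity input is accurate. One small caveat on the easy direction: the properness of $\mathtt J_m$ is not entirely free in a general unital Banach algebra and is itself part of what Allan establishes, although for the $C^\ast$-algebras actually used in this paper it is immediate.

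It may interest you that the paper does, later on, rederive Allan's principle in the special case $\mathcal A=C^\ast(\h)$ by a completely different route: in the example following Corollary~\ref{2tc:closedideals} the authors observe that every primitive ideal of $C^\ast(\h)$ contains some $\mathtt J_\theta$, so Theorem~\ref{2t:closedideals}---whose proof rests on Na\u\i mark's Theorem~\ref{2t:neumark} rather than on compactness or semi-continuity on $\textup{Max}(\mathcal C(\mathcal A))$---immediately gives the local principle for that algebra. That argument buys generality in the direction of arbitrary nilpotent group $C^\ast$-algebras, whereas your partition-of-unity approach stays closer to the classical Gelfand picture.
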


We would like to mention already here that in Section \ref{2section7} Allan's local principle will appear a second time and will link invertibility
of $f\in\lh$ to the invertibility of the evaluations of Stone-von Neumann representations at $f$.

Let us now prove our main theorem.

\subsection{Proof of Wiener's Lemma}\label{2subs:proofofwiener}
We apply the general observations made in the previous subsection to explore invertibility in $\lh$ and $C^\ast(\h)$.
Since $\lh$ is inverse-closed in $C^\ast(\h)$ we can focus on the study of invertibility in $C^\ast(\h)$.

Due to Allan's local principle
we have to check invertibility only in $\mathcal Q_\theta =C^\ast(\h)/\mathtt J_\theta $ for all $\theta \in \mathbb{T}$, where
${\mathtt J} _\theta =\overline{(z-e^{2\pi i\theta })C^\ast(\h)}$.
Indeed, $\mathcal C (C^\ast(\h)) \simeq C(\T ,\C)$, and the maximal ideals of  $ C(\T ,\C)$ are given by the sets
\begin{equation*}
	m_\theta \coloneqq \{F\in C(\T,\C) \,:\, F(\theta)=0 \}\,
\end{equation*}
and $\mathtt{J}_{m_\theta} = \mathtt J _\theta$.

Since ${\mathtt J} _\theta =\overline{(z-e^{2\pi i\theta })C^\ast(\h)} $ is a two-sided closed ideal we know that the quotient $\mathcal Q_\theta$ is a $C^\ast$-algebra and hence symmetric for each $\theta \in \T$.

By Schur's Lemma, if $\pi$ is an irreducible unitary representation of $\h$, then $\pi(z)= e^{2\pi i\theta }1_{\mathcal B (\mathcal H _\pi)}$ for some $\theta \in \mathbb{T}$. Hence, $\mathtt J_\theta$ is
a subset of $\ker (\pi)$ for every irreducible unitary representation $\pi$ of $\h$ with $\pi(z)= e^{2\pi i\theta }1_{\mathcal B (\mathcal H _\pi)}$.

If $\theta$ is rational the irreducible unitary representations of $\h$ vanishing on $\mathtt J _\theta $ are given by \eqref{2eq:pi_theta_s2} -- \eqref{2eq:pi_theta_s22} and were determined in \cite{2BEEK}. Due to the fact that $\mathcal Q_\theta$ is symmetric we can apply Theorem \ref{2t:neumark} in order to study invertibility in $\mathcal Q_\theta$ via the representations \eqref{2eq:pi_theta_s2} -- \eqref{2eq:pi_theta_s22}.

Now suppose $\theta $ is irrational. In order to study the representation theory of the $C^\ast$-algebra $\mathcal Q_\theta $ we have to understand the link to
one of the most studied non-commutative $C^\ast$-algebras -- the irrational rotation algebras.

We call a $C^\ast$-algebra an \textit{irrational rotation algebra} if it is generated by two unitaries $\textup{U},\textup{V}$ which fulfil the commutation relation
\begin{equation}\label{2d:rotation}
\textup{U}\textup{V}=e^{2\pi i\theta }\textup{V}\textup{U} \,,
\end{equation}
for some irrational $\theta \in \mathbb{T}$. We already saw examples of irrational rotation algebras above, namely, the $C^\ast$-subalgebras of $\mathcal B (L^2(\T,\mu))$ which are generated by $\textup{M}_\mu$
and $\textup{T}_{\theta ,\mu }$, where $\mu$ is a $\textup{R}_\theta$-quasi-invariant and ergodic measure. The reason why we call all $C^\ast$-algebras which fulfil \eqref{2d:rotation} irrational rotation algebras with parameter $\theta $ is the following striking result which can be found in \cite[Theorem VI.1.4]{2D} (and which was already proved in the 1960s, cf. \cite{2Rieffel} for a list of references).
\begin{theorem}\label{2t:simple}
If $\theta \in \mathbb{T}$ is irrational, then all $C^\ast$-algebras which are generated by two unitaries $\textup{U},\textup{V}$ satisfying \eqref{2d:rotation}, are $^\ast$-isomorphic.
\end{theorem}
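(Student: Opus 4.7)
The plan is to build a universal object $A_\theta$ among all unital $C^\ast$-algebras generated by two unitaries satisfying \eqref{2d:rotation}, to show that $A_\theta$ is simple when $\theta$ is irrational, and then to deduce that any concrete realisation is $\ast$-isomorphic to $A_\theta$. First I would construct $A_\theta$ by starting with the free unital $\ast$-algebra on two symbols $u,v$ modulo the relations $u^\ast u=uu^\ast=v^\ast v=vv^\ast=1$ and $uv=e^{2\pi i\theta}vu$; using the commutation relation, every element of the resulting $\ast$-algebra $\mathcal{A}_\theta^0$ has a canonical form $\sum_{m,n}c_{m,n}u^m v^n$ with finitely many nonzero coefficients. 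Define $\|a\|=\sup_\pi\|\pi(a)\|$ over all unital $\ast$-representations $\pi$ sending $u,v$ to unitaries; this supremum is finite because $\|\pi(u^m v^n)\|\le 1$ gives $\|\pi(a)\|\le\sum|c_{m,n}|$. Completion of $\mathcal{A}_\theta^0$ modulo the nullspace of this seminorm yields a $C^\ast$-algebra $A_\theta$ with the expected universal property: for any concrete $C^\ast$-algebra $B$ generated by unitaries $U,V$ with $UV=e^{2\pi i\theta}VU$, the assignment $u\mapsto U$, $v\mapsto V$ extends uniquely to a surjective $\ast$-homomorphism $\pi_B\colon A_\theta\twoheadrightarrow B$.

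The theorem will follow once I show $A_\theta$ is simple, for then $\ker\pi_B=\{0\}$ and $\pi_B$ is a $\ast$-isomorphism, so any two concrete realisations are $\ast$-isomorphic via $A_\theta$. To establish simplicity I would use the gauge action: for each $(\zeta,\eta)\in\mathbb{T}^2$, the map $u\mapsto\zeta u$, $v\mapsto\eta v$ preserves the defining relation and so extends by universality to an automorphism $\beta_{\zeta,\eta}$ of $A_\theta$, with $(\zeta,\eta)\mapsto\beta_{\zeta,\eta}(a)$ continuous for each $a$. Integration against normalised Haar measure produces a norm-one projection
\begin{equation*}
E(a)=\int_{\mathbb{T}^2}\beta_{\zeta,\eta}(a)\,d\zeta\,d\eta
\end{equation*}
which is a faithful conditional expectation of $A_\theta$ onto the fixed-point subalgebra $A_\theta^{\mathbb{T}^2}$. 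On $\mathcal{A}_\theta^0$ the monomials $u^m v^n$ lie in distinct spectral subspaces for the $\mathbb{T}^2$-action, so $E$ picks off the $(0,0)$-coefficient $c_{0,0}$; extending by continuity one gets $A_\theta^{\mathbb{T}^2}=\mathbb{C}\cdot 1$, and hence $E=\tau(\cdot)\,1$ for a tracial state $\tau$ which is faithful on positives because $E$ is.

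With $E$ in hand, simplicity falls out as follows. Let $I\lhd A_\theta$ be a nonzero closed two-sided ideal and choose $0\ne a\in I$ with $a\ge 0$. A Riemann-sum approximation of $E(a)$ by convex combinations of $\beta_{\zeta,\eta}(a)\in I$, together with closedness of $I$, shows $E(a)=\tau(a)\cdot 1\in I$; faithfulness of $\tau$ gives $\tau(a)>0$, so $1\in I$ and $I=A_\theta$. The main obstacle is the spectral-subspace argument in the previous paragraph: one has to verify that the algebraic decomposition of $\mathcal{A}_\theta^0$ into joint $\mathbb{T}^2$-eigenspaces persists in the completion and that the fixed-point subalgebra collapses to scalars. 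This is exactly where irrationality of $\theta$ is used, because the relation $u^m v^n = e^{-2\pi i\theta mn}v^n u^m$ keeps the monomials $u^m v^n$ linearly independent in every $\ast$-representation on which $z=uvu^{-1}v^{-1}$ acts as $e^{2\pi i\theta}$; once this is secured, the argument above closes and yields the claimed $\ast$-isomorphism class.
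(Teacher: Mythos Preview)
Your overall strategy is the standard one and would work, but the argument as written has a real gap, and you have misidentified the step where irrationality of $\theta$ is actually needed.

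The problematic sentence is ``A Riemann-sum approximation of $E(a)$ by convex combinations of $\beta_{\zeta,\eta}(a)\in I$''. For this you need $\beta_{\zeta,\eta}(I)\subseteq I$ for all $(\zeta,\eta)$, i.e., that every closed two-sided ideal of $A_\theta$ is gauge-invariant. Automorphisms do not preserve ideals in general, and for rational $\theta$ the algebra $A_\theta$ has nontrivial ideals that are \emph{not} gauge-invariant (for $\theta=0$ one has $A_0\cong C(\mathbb{T}^2)$, the gauge action is translation, and the maximal ideal of functions vanishing at a point is not translation-invariant). If your argument went through as stated it would prove simplicity for every $\theta$, which is false.

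The missing ingredient, and the genuine place where irrationality enters, is this: from $uv=e^{2\pi i\theta}vu$ one computes that conjugation by $u$ implements $\beta_{1,e^{2\pi i\theta}}$ and conjugation by $v$ implements $\beta_{e^{-2\pi i\theta},1}$. These gauge automorphisms are therefore inner and preserve every two-sided ideal. When $\theta$ is irrational the subgroup they generate is dense in $\mathbb{T}^2$; since $I$ is closed and $(\zeta,\eta)\mapsto\beta_{\zeta,\eta}(a)$ is norm-continuous, one concludes $\beta_{\zeta,\eta}(I)\subseteq I$ for all $(\zeta,\eta)$, and then your averaging argument is legitimate. By contrast, the facts you flag as requiring irrationality --- that the fixed-point subalgebra is $\mathbb{C}\cdot 1$ and that the monomials $u^mv^n$ are linearly independent in $A_\theta$ --- hold for every $\theta$ (the canonical trace $\tau(\sum c_{m,n}u^mv^n)=c_{0,0}$ exists regardless) and do not distinguish the irrational case.

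For comparison, the paper does not prove this theorem itself but quotes it from \cite{2D}; it also remarks (after Theorem~\ref{2t:prim=max}) on an alternative route: for irrational $\theta$ the representation $\pi_\theta^{(s,t)}$ is irreducible with kernel $\mathtt J_\theta$, and Poguntke's result that primitive ideals of $C^\ast(\Gamma)$ are maximal for discrete nilpotent $\Gamma$ then forces $\mathtt J_\theta$ to be maximal, hence the quotient is simple.
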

We will denote the irrational rotation algebra with parameter $\theta $ by $\mathcal R _\theta $ and will not distinguish between the different realisations of $\mathcal R _\theta $ because of the universal property described in Theorem \ref{2t:simple}. Let us further note that the proof of Theorem
\ref{2t:simple} is deduced from the simplicity of the universal irrational rotation algebra.

The $C^\ast$-algebra $\mathcal Q_\theta $ is clearly a rotation algebra with parameter $\theta $. The simplicity of $\mathcal R _\theta $ implies that $\mathtt J _\theta $ is a maximal two-sided ideal of $C^\ast(\h)$. Hence, there exists an irreducible representation $\pi$ of $\h$ such that $\ker (\pi )=\mathtt J _\theta $, since every two-sided maximal ideal is primitive (cf. \cite[Theorem 4.1.9]{2Palmer1}).  Moreover, all the irreducible representations $\pi$ vanishing on $\mathtt J _\theta $ have the same kernel: otherwise we would get a violation of the maximality of $\mathtt J _\theta $. These representations are not all in the same unitary equivalence class (as we saw in Section \ref{2section3}), which is an indication of the fact that $\h$ is not of Type I.

	\begin{proof}[Proof of Theorem \ref{2t:main}]
	First of all recall that $\lh$ is inverse-closed in $C^\ast(\h)$.
By applying Allan's local principle for $C^\ast(\h)$ the problem of verifying invertibility in $\lh$ and $C^\ast(\h)$ reduces to the study of invertibility in the $C^\ast$-algebras $\mathcal Q_\theta$, with $\theta \in\T$.

The rational case is trivial and was already treated at the beginning of the discussion.

If $\theta $ is irrational, any irreducible representation $\pi$ of $\h$  which vanishes on $\mathtt J_\theta $ can be used  to check invertibility in $\mathcal Q_\theta $. Indeed, since
for an arbitrary unital $C^\ast$-algebra $\mathcal A$ and an irreducible representation $\pi$ of $\mathcal A$, the $C^\ast$-algebras $\pi (\mathcal A)$ and $\mathcal A /\ker ( \pi )$ are isomorphic one gets
\begin{equation*}
\pi(C^\ast(\h)) \simeq C^\ast(\h) / \ker(\pi) =  C^\ast(\h) /  \mathtt J _\theta           = \mathcal Q _\theta
\end{equation*}
due to Theorem \ref{2t:simple}.
In particular we may use the representations $\pi _{\theta}^{(1,1)}$  as in \eqref{2eq:pi_theta_s0}.
	\end{proof}

\begin{remark}
We should note here that for  all realisations of the irrational rotation algebra the spectrum of $a \in \mathcal R_\theta $ is the same as a set. But this does \textit{not} imply that an eigenvalue (or an element of the continuous spectrum) of $a$ in one realisation is an eigenvalue (or an element of the continuous spectrum) of $a$ in all the other realisations.
\end{remark}

\subsection{Finite-dimensional approximation}
The following proposition follows from Theorem \ref{2t:main} and might be useful for checking invertibility of $f\in\Z[\h]$ via numerical simulations.
\begin{proposition}\label{2prop}
Let $f\in\Z[\h]$. Then $\alpha_f$ is expansive if and only if there exists a constant $c>0$ such that $\pi (f)$ is invertible and
$\|\pi(f)^{-1}\| \leq c$ for every finite-dimensional irreducible representation $\pi$ of $\h$.
\end{proposition}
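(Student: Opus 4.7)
My plan is to deduce the proposition from Theorem \ref{2t:main} after identifying the finite-dimensional irreducible representations of $\h$ with the representations $\pi_{p/q}^{(s,t)}$ at \emph{rational} central character $\theta=p/q$. The forward implication is essentially a free consequence of what has already been established: if $\alpha_f$ is expansive, the Deninger--Schmidt theorem from Section \ref{2intro} produces an inverse $g\coloneqq f^{-1}\in\ell^1(\h,\R)\subseteq\ell^1(\h,\C)$. Setting $c\coloneqq\|g\|_1$, any unitary representation $\pi$ extends contractively to $\ell^1(\h,\C)$, so $\|\pi(g)\|\le c$, and $\pi(f)\pi(g)=\pi(g)\pi(f)=I$ yields $\|\pi(f)^{-1}\|\le c$ --- in particular for every finite-dimensional irreducible $\pi$.

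For the converse, my first step is to invoke Theorem \ref{2t:main}: it suffices to verify that $\pi_\theta^{(s,t)}(f)$ is invertible for every $\theta,s,t\in\T$. For rational $\theta=p/q$ the representation $\pi_{p/q}^{(s,t)}$ is $q$-dimensional irreducible, so the hypothesis provides invertibility with $\|\pi_{p/q}^{(s,t)}(f)^{-1}\|\le c$ uniformly in $(p,q,s,t)$. Because the $\pi_{p/q}^{(s,t)}$, $(s,t)\in\T^2$, exhaust the irreducible representations of $\mathcal Q_{p/q}=C^*(\h)/\mathtt J_{p/q}$ (cf.\ Subsection \ref{2subs:proofofwiener}), this translates into the two-sided operator inequalities
\begin{equation*}
\Phi_\theta(f^*f)\ge c^{-2}e_\theta\quad\text{and}\quad\Phi_\theta(ff^*)\ge c^{-2}e_\theta\qquad\text{in }\mathcal Q_\theta\text{ for every rational }\theta,
\end{equation*}
where $e_\theta$ denotes the identity of $\mathcal Q_\theta$.

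The next --- and decisive --- step is to propagate these inequalities to \emph{irrational} $\theta$ by exploiting the continuous field structure of the family $\{\mathcal Q_\theta\}_{\theta\in\T}$ of rotation $C^*$-algebras over $\T$, a classical fact about rotation algebras. Since $\theta\mapsto\Phi_\theta(f^*f-c^{-2}e)$ is a continuous section and positivity is a closed condition on each fibre, density of $\Q$ in $\T$ forces the inequality to persist in every fibre. For irrational $\theta$, simplicity of $\mathcal R_\theta\cong\mathcal Q_\theta$ (Theorem \ref{2t:simple}) makes $\pi_\theta^{(s,t)}$ faithful, so two-sided bounded-belowness of $\pi_\theta^{(s,t)}(f)$ promotes to invertibility of $\pi_\theta^{(s,t)}(f)$ on $L^2(\T,\lambda)$. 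Theorem \ref{2t:main} then delivers invertibility of $f$ in $C^*(\h)$; the Wiener pair property (Theorem \ref{2t:Wienerpair}) lifts this to $\ell^1(\h,\C)$, and uniqueness of inverses combined with $f\in\ell^1(\h,\R)$ forces the inverse to be real-valued, so $\alpha_f$ is expansive by Deninger--Schmidt.

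I expect the main obstacle to be precisely this rational-to-irrational extension: mere pointwise invertibility at the rationals would not be preserved under limits, but the \emph{uniform} bound $c$ is exactly what converts the hypothesis into a closed positivity condition on the continuous section $\theta\mapsto\Phi_\theta(f^*f-c^{-2}e)$, which then propagates across $\T$ through the continuous field. A secondary technicality is the passage from two-sided bounded-belowness to invertibility inside the $C^*$-algebra $\mathcal R_\theta$, which is standard (via functional calculus on the positive invertible elements $\Phi_\theta(f^*f)$ and $\Phi_\theta(ff^*)$, or via polar decomposition).
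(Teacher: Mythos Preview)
Your argument is correct, but it proceeds along a genuinely different line from the paper's. You pass through the abstract continuous-field structure of the rotation algebras $\{\mathcal Q_\theta\}_{\theta\in\T}$: the uniform bound $c$ converts the hypothesis into the positivity of the self-adjoint section $\theta\mapsto\Phi_\theta(f^\ast f-c^{-2}e)$ on the dense rational fibres, and continuity of $\theta\mapsto\|\Phi_\theta(a)\|$ (the continuous-field property) makes the set $\{\theta:\Phi_\theta(f^\ast f-c^{-2}e)\ge 0\}$ closed, hence all of $\T$. The paper instead argues by contradiction with an entirely elementary Riemann-sum approximation: it realises both $\pi_\theta^{(1,1)}$ (irrational $\theta$) and $\pi_{p/q}^{(1,1)}$ on functions on $\T$ via \eqref{2d:reponS}, takes a continuous near-null vector $H_\varepsilon$ for $\pi_\theta^{(1,1)}(f)$, and shows that the discrete $L^2$-norms $\int|\pi_{p/q}^{(1,1)}(f)H_\varepsilon|^2\,d\nu_q$ approximate the Lebesgue integrals as $q\to\infty$ along primes with $|p/q-\theta|<1/q$, directly contradicting the uniform bound on $\|\pi_{p/q}^{(1,1)}(f)^{-1}\|$; the case where the range is not dense is reduced to this via the adjoint. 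Your route is cleaner and in fact yields the stronger statement for arbitrary $f\in C^\ast(\h)$, at the cost of importing the continuous-field theorem for rotation algebras; the paper's route is self-contained and exploits the finite support of $f\in\Z[\h]$ to get the uniform convergence $\pi_{p/q}^{(1,1)}(f)H_\varepsilon\to\pi_\theta^{(1,1)}(f)H_\varepsilon$.
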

For the proof of the Proposition we work with the representations $\pi _{\theta}^{(1,1)}$ in \eqref{2eq:pi_theta_s0}. For irrational $\theta $,
\begin{equation}\label{2d:reponS}
	(\pi_{\theta}^{(1,1)}(x) H)(t)=H(t+\theta ), \quad (\pi_{\theta}^{(1,1)}(y) H)(t)=e^{2\pi i t}H(t)\,,
\end{equation}
for every $H \in L^2(\mathbb{T},\lambda_\mathbb{T})$ and $t\in\T$. For rational $\theta $ of the form $\theta =p/q$ with $(p,q)=1$  we replace the Lebesgue measure $\lambda =\lambda _\mathbb{T}$ in \eqref{2d:reponS} by the uniform
probability measure $\nu_q$ concentrated on the cyclic group $\{1/q,\dots ,(q-1)/q,1\}\subset \mathbb{T}$.

\begin{proof}
One direction is obvious. For the converse, assume that $\alpha_f$ is non-expansive, but that there exists a constant $c>0$ such that $\pi(f)$
is invertible and $\|\pi(f)^{-1}\| \leq c$ for every finite-dimensional irreducible representation $\pi$ of $\h$.

Since $\alpha_f$ is non-expansive, there exists an irrational $\theta$ (by our assumption) such that the operator $\pi_{\theta}^{(1,1)}(f)$ has no bounded inverse due to Theorem \ref{2t:main} and its proof. Therefore, $\pi_{\theta}^{(1,1)}(f)$ is either not bounded from below or its range is not dense in the representation space or both.

We consider first the case where $\pi_{\theta}^{(1,1)}(f)$ is not bounded from below. Then there exists, for every $\varepsilon >0$, an element $H_\varepsilon \in L^2(\mathbb{T},\lambda _\mathbb{T})$ with $\|H_\varepsilon \|_2 =1$ and $\|\pi_{\theta}^{(1,1)}(f)H_\varepsilon \|_2 < \varepsilon$. By approximating the $H_\varepsilon$ by continuous functions we may obviously assume that each $H_\varepsilon$ is continuous.

Let $q$ be a rational prime, and let $p$ satisfy $|\theta -p/q|<1/q$.
Then
\begin{equation*}
	\int |H_\varepsilon |^2 d\nu_q \quad \text{and} \quad  \int |\pi_{\theta}^{(1,1)}(f) H_\varepsilon |^2 d\nu_q
\end{equation*}
are Riemann approximations to the corresponding integrals with respect to $\lambda$. Hence,
\begin{equation*}
	\lim_{q \to \infty} \int |H_\varepsilon |^2 d\nu_q =1 \quad \text{and} \quad
	\lim_{q \to \infty} \int |\pi_{\theta}^{(1,1)}(f) H_\varepsilon |^2 d\nu_q \leq \varepsilon^2\,.
\end{equation*}
Furthermore, as $q\to\infty$, $\pi_{p/q}^{(1,1)}(f) H_\varepsilon$ converges uniformly to $\pi_{\theta}^{(1,1)}(f) H_\varepsilon$.
From this we deduce that
\begin{equation*}
\limsup_{q \to \infty} \int |\pi _{p/q}^{(1,1)}(f) H_\varepsilon |^2 d\nu_q \leq \varepsilon^2\,.
\end{equation*}
This clearly violates the hypothesis that $\pi _{p/q}^{(1,1)}(f)$, $q$ prime, have uniformly bounded inverses.

Finally, assume that $\pi _{\theta}^{(1,1)}(f)$ has no dense image in $L^2(\T,\lambda)$. In that case the adjoint operator  $(\pi_{\theta}^{(1,1)}(f))^\ast=\pi_{\theta}^{(1,1)}(f^\ast)$ is not injective \footnote{\label{2footnote:adjoint} For an operator $\textup{A}$ acting on a Hilbert space $\mathcal H$ one has
$(\ker \textup{A})^\perp = \overline{ \text{im } \textup{A}^\ast }$.}.
Furthermore, by our assumptions, $\|\pi(f^\ast)^{-1}\| \leq c$ for every finite-dimensional irreducible representation $\pi$ of $\h$.
The same arguments as in the first part of the proof lead to a contradiction.
\end{proof}

\section{Invertibility in group algebras of discrete nilpotent groups} \label{2section5}
In this section we aim to find more evident conditions for invertibility in group algebras for discrete countable  nilpotent groups
than the one given in Theorem \ref{2t:neumark}.
 The main objects of our investigations are the primitive ideal space and the class of irreducible monomial representations of the group.

\subsection{Wiener's Lemma for nilpotent groups}
Let $\Gamma$ be a countable discrete nilpotent group.
As we have seen earlier, $\ell^1(\Gamma,\C)$ is inverse-closed in $C^\ast(\Gamma)$. Hence we concentrate on the group $C^\ast$-algebra $C^\ast(\Gamma)$.

In order to establish a Wiener Lemma in this more general setting we are first going to reinterpret Wiener's Lemma for the discrete Heisenberg group.
From the discussion in Subsection \ref{2subs:proofofwiener} one can easily see that the irreducible unitary representations $\pi_\theta^{(s,t)}$, $\theta,s,t\in\T$, those representations which correspond to ergodic $\textup{R}_\theta$-invariant measures on $\T$, generate the primitive ideal space $\textup{Prim}(C^\ast(\h))$. Moreover, since $\pi(C^\ast(\h)) \simeq C^\ast(\h) / \ker(\pi)$ for every $\pi\in\widehat \h$ the study of invertibility is directly linked to invertibility of projections onto the primitive ideals. We may interpret this as a localisation principle.

Before formulating a Wiener Lemma for an arbitrary discrete nilpotent group let us fix some notation. Let $\mathcal A$ be a unital $C^\ast$-algebra. For every  two-sided closed ideal $\mathtt J$ of $\mathcal A$, denote by $\Phi_{\mathtt J}$  the canonical projection from $\mathcal A$ onto the $C^\ast$-algebra $\mathcal A/ \mathtt J$.

\begin{theorem}[Wiener's Lemma for nilpotent groups]\label{2t:Wienerviaprimitive}
If $\Gamma$ is a discrete nilpotent group, then $a \in C^\ast(\Gamma)$ is invertible if and only if $\Phi_\mathtt{I}(a)$ is invertible for every $\mathtt I \in \textup{Prim}(C^\ast(\Gamma))$.
\end{theorem}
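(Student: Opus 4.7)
The forward implication is immediate: the canonical $\ast$-homomorphism $\Phi_{\mathtt I}\colon C^\ast(\Gamma)\to C^\ast(\Gamma)/\mathtt I$ carries an inverse of $a$ to an inverse of $\Phi_{\mathtt I}(a)$ for every closed two-sided ideal $\mathtt I$, in particular for each $\mathtt I\in\textup{Prim}(C^\ast(\Gamma))$. For the substantive ``if'' direction my plan is to argue by contraposition: assuming $a$ is not invertible in $C^\ast(\Gamma)$, I would produce a primitive ideal $\mathtt I$ on which $\Phi_{\mathtt I}(a)$ is not invertible.

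Since $C^\ast(\Gamma)$ is automatically symmetric as a $C^\ast$-algebra, Theorem \ref{2t:neumark} is at my disposal without invoking nilpotency. Non-invertibility of $a$ in the unital $C^\ast$-algebra $C^\ast(\Gamma)$ forces at least one of $a^\ast a,\, aa^\ast$ to be non-invertible, hence at least one of $a,\,a^\ast$ to fail left invertibility. Applying Theorem \ref{2t:neumark} to whichever of these is not left invertible furnishes an irreducible $\ast$-representation $\pi$ of $C^\ast(\Gamma)$ and a unit vector $u\in\mathcal H_\pi$ with $\pi(a)u=0$ or $\pi(a)^\ast u=0$. In either case $\pi(a)$ fails to be bijective on $\mathcal H_\pi$ and therefore has no inverse in $\mathcal B(\mathcal H_\pi)$. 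Setting $\mathtt I\coloneqq\ker\pi$, which is primitive by definition, the first isomorphism theorem for $C^\ast$-algebras provides a $\ast$-isomorphism $\Psi\colon C^\ast(\Gamma)/\mathtt I \xrightarrow{\sim} \pi(C^\ast(\Gamma))\subseteq \mathcal B(\mathcal H_\pi)$ satisfying $\Psi\circ\Phi_{\mathtt I}=\pi$. A hypothetical inverse of $\Phi_{\mathtt I}(a)$ in $C^\ast(\Gamma)/\mathtt I$ would transport under $\Psi$ to an inverse of $\pi(a)$ inside the unital $\ast$-subalgebra $\pi(C^\ast(\Gamma))$, hence to an operator inverse of $\pi(a)$ on $\mathcal H_\pi$ --- impossible. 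Thus $\Phi_{\mathtt I}(a)$ is not invertible, completing the contrapositive.

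The essential content is supplied by Theorem \ref{2t:neumark}; the present theorem amounts to the observation that the ``irreducible representations'' quantified there may be grouped by their kernels into the (typically far smaller and much better behaved) set $\textup{Prim}(C^\ast(\Gamma))$, as highlighted by the discussion in Section \ref{2section3} of $\h$, where for each irrational $\theta$ uncountably many inequivalent irreducible representations share a common kernel. I do not expect any real obstacle: the only technical step is the transport of non-invertibility across the first isomorphism theorem, which causes no trouble because $\pi(C^\ast(\Gamma))$ sits as a unital $\ast$-subalgebra of $\mathcal B(\mathcal H_\pi)$ and any inverse in the smaller algebra automatically inverts $\pi(a)$ as an operator. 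The role of nilpotency is not in the argument itself but in ensuring that $\textup{Prim}(C^\ast(\Gamma))$ admits an explicit, workable description, which is what makes the theorem useful in applications.
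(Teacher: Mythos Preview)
Your proposal is correct and follows essentially the same approach as the paper: both argue the nontrivial direction by contraposition, invoking Theorem \ref{2t:neumark} to produce an irreducible representation $\pi$ with $\pi(a)$ non-invertible, then setting $\mathtt I=\ker\pi$ and transporting non-invertibility through the canonical isomorphism $C^\ast(\Gamma)/\mathtt I\cong\pi(C^\ast(\Gamma))$. The paper in fact derives the result as the special case $\mathfrak I=\textup{Prim}(C^\ast(\Gamma))$ of the slightly more general Theorem \ref{2t:closedideals}, and like you it observes that nilpotency plays no role in the argument itself; your treatment of the left-versus-two-sided invertibility distinction is actually a bit more careful than the paper's, which applies Theorem \ref{2t:neumark} directly without explicitly reducing to left non-invertibility.
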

This theorem links questions about invertibility in $\ell^1(\Gamma,\C)$ and $C^\ast(\Gamma)$ to their representation theory and, to be more specific, to the primitive ideal space $\textup{Prim}(C^\ast(\Gamma))$. At the same time this result provides us with a sufficient family of ideals in order to study invertibility and hence, Wiener's Lemma for nilpotent groups describes a localisation principle.
We will learn in the next subsection that for discrete nilpotent groups $\Gamma$ the class of irreducible representation which are induced by one-dimensional representations of subgroups of $\Gamma$ provide us with an effective tool to generate $\textup{Prim}(C^\ast(\Gamma))$. In other words, it is a feasible task to determine the primitive ideal space $\textup{Prim}(C^\ast(\Gamma))$.

Theorem \ref{2t:Wienerviaprimitive} can  be generalised to all unital $C^\ast$-algebras. Moreover,
we provide a sufficient condition for a family of ideals in order to check  invertibility via localisations.

Suppose $\mathfrak I$ is a collection of ideals of a $C^\ast$-algebra $\mathcal A$, such that
\begin{enumerate}
\item[(i)] every $\mathtt I\in\mathfrak I$ is closed and two-sided,
\item[(ii)] for any primitive ideal $\mathtt J\in \text{Prim}(\mathcal A)$ there exists $\mathtt I\in\mathfrak I$ such that
$\mathtt I\subseteq \mathtt J$.
\end{enumerate}

\begin{theorem}\label{2t:closedideals}
Let $\mathcal A$ be a unital $C^\ast$-algebra.
Suppose $\mathfrak I$ satisfies conditions \textup{(i)} and \textup{(ii)} above.
Then an element $a$ in $\mathcal A$ is invertible if and only if
for every $ \mathtt I\in\mathfrak I$ the projection of
$a$ on $\mathcal A/\mathtt I$ is invertible.
\end{theorem}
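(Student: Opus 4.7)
The forward direction is immediate: if $a$ is invertible in $\mathcal A$ and $\mathtt I \in \mathfrak I$, then $\Phi_{\mathtt I}(a^{-1})$ inverts $\Phi_{\mathtt I}(a)$ in the quotient $C^\ast$-algebra $\mathcal A/\mathtt I$. For the converse, my plan is to first establish the special case $\mathfrak I = \textup{Prim}(\mathcal A)$, which is essentially Theorem \ref{2t:Wienerviaprimitive} upgraded from discrete nilpotent groups to arbitrary unital $C^\ast$-algebras, and then reduce the general statement to it by exploiting hypothesis (ii).

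For the primitive-ideal version I would argue by contraposition. Assume $a \in \mathcal A$ is not invertible; then either $a$ or $a^\ast$ fails to be left invertible. Since every $C^\ast$-algebra is a symmetric unital Banach-$\ast$-algebra, Theorem \ref{2t:neumark} produces an irreducible representation $\pi$ of $\mathcal A$ on some Hilbert space $\mathcal H _\pi$ and a unit vector $u \in \mathcal H _\pi$ with either $\pi(a)u = 0$ or $\pi(a)^\ast u = \pi(a^\ast)u = 0$. In both cases the operator $\pi(a)\in \mathcal B (\mathcal H _\pi)$ fails to be invertible, being either non-injective or having non-dense range. As $\pi$ factors as $\mathcal A \twoheadrightarrow \mathcal A/\ker \pi \hookrightarrow \mathcal B (\mathcal H _\pi)$ and $\ast$-monomorphisms of $C^\ast$-algebras are isometric and spectrum-preserving, non-invertibility of $\pi(a)$ inside $\mathcal B (\mathcal H _\pi)$ translates into non-invertibility of $\Phi_{\ker \pi}(a)$ inside $\mathcal A/\ker \pi$. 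Since $\ker \pi \in \textup{Prim}(\mathcal A)$ by definition, this contradicts the assumption that $\Phi_{\mathtt J}(a)$ is invertible for every primitive ideal $\mathtt J$, completing the special case.

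To deduce the theorem as stated, I would then assume that $\Phi_{\mathtt I}(a)$ is invertible for every $\mathtt I \in \mathfrak I$ and check the hypothesis of the primitive-ideal version. Given any $\mathtt J \in \textup{Prim}(\mathcal A)$, hypothesis (ii) supplies an $\mathtt I \in \mathfrak I$ with $\mathtt I \subseteq \mathtt J$; because $\mathtt I$ is closed and two-sided in $\mathcal A$, the inclusion induces a surjective unital $\ast$-homomorphism
\begin{equation*}
\psi \colon \mathcal A/\mathtt I \longrightarrow \mathcal A/\mathtt J, \qquad \psi \circ \Phi_{\mathtt I} = \Phi_{\mathtt J}.
\end{equation*}
Unital homomorphisms preserve invertibility, so $\Phi_{\mathtt J}(a) = \psi(\Phi_{\mathtt I}(a))$ is invertible in $\mathcal A/\mathtt J$. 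Invoking the primitive-ideal version now yields invertibility of $a$ in $\mathcal A$.

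The main obstacle is the primitive-ideal version; the reduction from $\mathfrak I$ to $\textup{Prim}(\mathcal A)$ is a purely formal consequence of (ii). The delicate point in the primitive-ideal version is to convert the Hilbert-space statement supplied by Theorem \ref{2t:neumark} into an algebraic statement inside the abstract quotient $\mathcal A/\ker \pi$, which rests on the standard $C^\ast$-algebraic facts that $\ast$-homomorphic images are themselves $C^\ast$-algebras and that $C^\ast$-subalgebras are inverse-closed in any ambient $C^\ast$-algebra.
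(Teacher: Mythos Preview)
Your proof is correct and follows essentially the same approach as the paper: the key tool in both is Theorem~\ref{2t:neumark}, and the backward implication in each case amounts to producing an irreducible representation $\pi$ annihilating a vector, then factoring $\pi$ through a quotient by an ideal in $\mathfrak I$ contained in $\ker\pi$. The only organisational differences are that the paper treats both implications by contraposition (re-invoking representation theory even for the easy forward direction, where you simply use that unital homomorphisms preserve inverses), and that the paper carries out the backward implication in one pass rather than separating out the case $\mathfrak I=\textup{Prim}(\mathcal A)$ as an intermediate lemma; your two-step version is equivalent and arguably cleaner. You are also slightly more careful than the paper in distinguishing the cases where $a$ versus $a^\ast$ fails to be left invertible before applying Theorem~\ref{2t:neumark}.
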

By setting $\mathfrak I=\text{Prim}(C^*(\Gamma))$,  Theorem \ref{2t:Wienerviaprimitive} just becomes  a particular case of Theorem \ref{2t:closedideals}.

\begin{proof}
If $a \in \mathcal A$ is not invertible, then by Theorem \ref{2t:neumark} there exists an irreducible unitary representation $\pi$ of $\mathcal A$ such that
$\pi(a)v=0$ for some non-zero vector $v\in \mathcal H_ \pi$.
Moreover, for every two-sided closed ideal $\mathtt I \subseteq \ker (\pi )$ of $\mathcal A$
the representation $\pi$ induces a well-defined irreducible representation $\pi_{\mathtt I}$ of the $C^\ast$-algebra $\mathcal A/ \mathtt I$, i.e.,
\begin{equation*}
\pi_{\mathtt I}(\Phi_{\mathtt I}(a))=\pi(a) \,.
\end{equation*}
Hence, for every two-sided closed ideal $\mathtt I \subseteq \ker (\pi)$ of $C^\ast(\h)$, the element $\Phi_{\mathtt I}(a)$ is not invertible in $\mathcal A/ \mathtt I$, since the operator $\pi_{\mathtt I}(\Phi_{\mathtt I}(a))$ has a non-trivial kernel in $\mathcal H_\pi$.

Let us assume now that $\Phi_{\mathtt I}(a)$ is not invertible in the $C^\ast$-algebra $\mathcal A/ \mathtt I$ for some $\mathtt I\in\mathfrak I$. Hence, there exists an irreducible representation $\rho$ of $\mathcal A/ \mathtt I$ such that \begin{equation*}\rho (\Phi_{\mathtt I}(a)) v =0\end{equation*} for some vector
$v\in\mathcal H _\rho$. The irreducible representation $\rho$  can be extended to an irreducible representation $\tilde \rho$ of $\mathcal A$ which vanishes on $\mathtt I$ and which is given by $\tilde \rho = \rho \circ \Phi_{\mathtt I}$. Therefore, $a$ is not invertible in $\mathcal A$.
\end{proof}
From the proof of Theorem \ref{2t:closedideals} we get the following corollary.
\begin{corollary}\label{2tc:closedideals}
If $\pi(a)$ is not invertible for an irreducible representation $\pi$, then for every two-sided closed ideal $\mathtt I \subseteq \ker (\pi)$ of $C^\ast(\h)$, the element $\Phi_{\mathtt I}(a)$ is non-invertible in $\mathcal A/ \mathtt I$.
\end{corollary}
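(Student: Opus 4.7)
The corollary is essentially the contrapositive of the easy direction in the proof of Theorem \ref{2t:closedideals}, so my plan is to extract that argument cleanly. The strategy is to show that any hypothetical inverse of $\Phi_{\mathtt I}(a)$ in the quotient $\mathcal A/\mathtt I$ would produce, via a canonical descended representation, a bounded inverse for $\pi(a)$ on $\mathcal H_\pi$, contradicting the hypothesis.

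First, I would observe that since $\mathtt I$ is a closed two-sided ideal of $\mathcal A=C^\ast(\h)$ contained in $\ker(\pi)$, the irreducible $^\ast$-representation $\pi$ factors through the quotient map $\Phi_{\mathtt I}\colon \mathcal A \longrightarrow \mathcal A/\mathtt I$. Concretely, the prescription
\begin{equation*}
\pi_{\mathtt I}(\Phi_{\mathtt I}(a)) \coloneqq \pi(a), \qquad a\in \mathcal A,
\end{equation*}
defines a $^\ast$-homomorphism $\pi_{\mathtt I}\colon \mathcal A/\mathtt I \longrightarrow \mathcal B(\mathcal H_\pi)$, well-defined precisely because $\mathtt I \subseteq \ker(\pi)$. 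Irreducibility of $\pi_{\mathtt I}$ is automatic, since its image coincides with $\pi(\mathcal A)$ and the latter acts irreducibly on $\mathcal H_\pi$.

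Next, I would proceed by contradiction: suppose $\Phi_{\mathtt I}(a)$ admits a two-sided inverse $b \in \mathcal A/\mathtt I$. Applying the $^\ast$-homomorphism $\pi_{\mathtt I}$ yields
\begin{equation*}
\pi_{\mathtt I}(b)\,\pi(a) \;=\; \pi_{\mathtt I}(b)\,\pi_{\mathtt I}(\Phi_{\mathtt I}(a)) \;=\; \pi_{\mathtt I}(b\,\Phi_{\mathtt I}(a)) \;=\; \pi_{\mathtt I}(1_{\mathcal A/\mathtt I}) \;=\; 1_{\mathcal B(\mathcal H_\pi)},
\end{equation*}
and symmetrically on the other side, so $\pi_{\mathtt I}(b)$ is a bounded two-sided inverse of $\pi(a)$ on $\mathcal H_\pi$. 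This contradicts the assumption that $\pi(a)$ is not invertible, so $\Phi_{\mathtt I}(a)$ cannot be invertible in $\mathcal A/\mathtt I$.

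There is no real obstacle in this proof; the only point worth flagging is that one must verify that $\pi_{\mathtt I}$ is a genuine (bounded) $^\ast$-homomorphism of $C^\ast$-algebras, which is standard once $\mathtt I$ is known to be closed and two-sided. Indeed, the entire content of the corollary is the categorical observation that a non-invertible element remains non-invertible under every homomorphic image in which its image is still defined, together with the universal property of the quotient $C^\ast$-algebra $\mathcal A/\mathtt I$.
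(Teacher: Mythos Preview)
Your proof is correct and follows essentially the same approach as the paper: both factor $\pi$ through the quotient to obtain $\pi_{\mathtt I}$ with $\pi_{\mathtt I}(\Phi_{\mathtt I}(a))=\pi(a)$, and then use that a $^\ast$-homomorphism preserves invertibility. The paper phrases the last step via the non-trivial kernel of $\pi(a)$ (coming from Theorem~\ref{2t:neumark}), while you phrase it as a direct contrapositive, but this is a cosmetic difference only.
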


\begin{example} Denote by $\mathtt J_\theta$, with $\theta\in\T$, the localisation ideals of $C^\ast(\h)$ as defined in Section \ref{2section4} and set
\begin{equation*}
\mathfrak J_{\h} \coloneqq \{\mathtt J _\theta \,\colon\,\theta\in \T\}\,.
\end{equation*}
Obviously, the restriction $r\colon\textup{Prim}(C^\ast(\h)) \longrightarrow \mathfrak J_{\h}$ given by $\mathtt I \mapsto r(\mathtt I)=\mathtt I \cap \mathcal C (C^\ast(\h))$ for every primitive ideal $\mathtt I$ of $C^\ast(\h)$, defines a surjective map. Hence,
Theorem \ref{2t:closedideals}  provides  a proof of Allan's local principle for $C^\ast(\h)$.
Moreover, Allan's local principle can be viewed as the most effective way to apply Theorem \ref{2t:closedideals} in order to check invertibility.
\end{example}

\subsection{Monomial representations}
\subsubsection*{The Heisenberg group}
Denote by $\textup{Ind}^{\h}_N(\sigma_{\theta,s})$ the representation of $\h$ induced from the
	normal subgroup
	\begin{equation*}
		N\coloneqq
		\left\{ \left ( \begin{matrix}
		1 & a & b \\
		0 & 1 & 0 \\
		0 & 0 & 1
		\end{matrix} \right) \,:\, a,b \in \Z\right\}
	\end{equation*}
	and the character $\sigma_{\theta,s}$ which is defined by
	\begin{equation*}
		 \sigma_{\theta,s}(z)=e^{2 \pi i \theta} \quad \text{and} \quad \sigma_{\theta,s}(x)=e^{-2 \pi i s}\,.
	\end{equation*}
	For the convenience of the reader we will write down  $\textup{Ind}^{\h}_N ( \sigma_{\theta ,s} )$ for every $\theta , s \in \T$	explicitly
	\begin{equation}\label{2e:inducedstonevonneumann}
	\left(\textup{Ind}^{\h}_N ( \sigma_{\theta ,s}) (x^k y^l z^m)F \right)(n) = e^{2 \pi i (m \theta - k(n\theta+s) )} F(n+l)
	\end{equation}
	for all $k,l,m,n\in \Z$ and $F\in \ell^2(\Z,\C)$.

	The representations $\textup{Ind}^{\h}_N(\sigma_{\theta,s})$ play a special role since they can be extended to the Stone-von Neumann representations of the \textit{real} Heisenberg group $\h_{\R}$ consisting of all unipotent upper triangular matrices in $\textup{SL}(3,\mathbb{R})$.
 The Stone-von Neumann representations of $\h_{\R}$ are obtained from Mackey's induction procedure from the real analogue of $N$, i.e.,
 \begin{equation*}
		N_{\R} \coloneqq
		\left\{ \left ( \begin{matrix}
		1 & a & b \\
		0 & 1 & 0 \\
		0 & 0 & 1
		\end{matrix} \right) \,:\, a,b \in \R\right\}
	\end{equation*}
	and its characters. The Stone-von Neumann representations are defined by modulation and translation operators on $L^2(\R,\C)$.

It is easy to see that for irrational $\theta$
the representation $\pi_{\theta}^{(1,1)}$ in \eqref{2eq:pi_theta_s0} is unitarily equivalent (via Fourier transformation) to the representation
$\textup{Ind}^{\h}_N(\sigma_{\theta,1})$. Moreover, every irreducible finite dimensional representation of a nilpotent group $\Gamma$ is induced from a one dimensional representation of a subgroup of $\Gamma$ (cf. \cite[Lemma 1]{2Brown}).

Therefore, the monomial representations contain all representations involved in
validating invertibility via Theorem \ref{2t:main}.

The natural question arises, whether one can always restrict oneself to the class of monomial representations of $\Gamma$ when analysing invertibility in the corresponding group algebras, in case $\Gamma$ is a countable discrete nilpotent group. We will show that the answer is positive.

\subsubsection*{The general case}
Let $\Gamma$ be a countable discrete nilpotent group. Define an equivalence relation on $\widehat \Gamma$ as follows:
\begin{equation*}
	\pi_1 \sim \pi_2 \iff \ker (\pi_1 ) = \ker (\pi _2 )\,,
\end{equation*}
where $\pi_1,\pi_2$ are irreducible unitary representations of $\Gamma$. This equivalence relation is the same as the notion of weak equivalence according to \cite{2Fell}.

The next theorem was established by Howe in \cite[Proposition 5]{2Howe}.
\begin{theorem}\label{2t:howe}
Suppose that $\Gamma$ is a countable discrete nilpotent group. Then every irreducible unitary representation is weakly equivalent  to an irreducible  monomial representation of $\Gamma$.
\end{theorem}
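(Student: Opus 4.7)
The plan is to induct on the nilpotency class $c$ of $\Gamma$. Since the equivalence relation $\pi_1 \sim \pi_2 \iff \ker(\pi_1) = \ker(\pi_2)$ identifies weak equivalence classes in $\widehat \Gamma$ with primitive ideals of $C^\ast(\Gamma)$, the theorem is equivalent to the statement that every primitive ideal of $C^\ast(\Gamma)$ is the kernel of some irreducible monomial representation. The base case $c=1$ is trivial: an abelian $\Gamma$ has only one-dimensional irreducible representations, which are monomial by definition and have pairwise distinct kernels.

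For the inductive step, let $\Gamma$ have nilpotency class $c \geq 2$ with non-trivial center $Z = Z(\Gamma)$, assume the result for all nilpotent groups of class $<c$, and fix $\pi \in \widehat \Gamma$. Schur's lemma yields a unique character $\chi \in \widehat Z$ with $\pi(z) = \chi(z)\cdot 1_{\mathcal H_\pi}$ for all $z \in Z$. I would then construct a \emph{polarization} for $\chi$, namely a subgroup $Z \subseteq H \subseteq \Gamma$ maximal with respect to the property that $\chi$ extends to a character $\tilde\chi$ of $H$. Such an $H$ can be built inductively along the ascending central series of $\Gamma$: at each stage one adjoins an element $g$ whose twisted commutator map $h \mapsto \tilde\chi([g,h])$ vanishes on the current subgroup (so that $\tilde\chi$ admits an extension), halting when no further enlargement is possible. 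Mackey's irreducibility criterion then shows that the candidate $\sigma \coloneqq \textup{Ind}_H^\Gamma(\tilde\chi)$ is irreducible, since for every $g \in \Gamma \setminus H$ the maximality of $H$ forces $\tilde\chi$ and its conjugate $\tilde\chi^g$ to disagree on $H \cap gHg^{-1}$.

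The principal obstacle is proving $\ker(\sigma) = \ker(\pi)$ in $C^\ast(\Gamma)$. This cannot be extracted from unitary equivalence, because $\pi$ and $\sigma$ will typically lie in different unitary equivalence classes within the same weak equivalence class, as the analysis of $\h$ in Section \ref{2section3} already illustrates. Instead, I would proceed by localization in the spirit of Allan's principle together with Theorem \ref{2t:Wienerviaprimitive}. Let $\mathtt K_\chi$ denote the closed two-sided ideal of $C^\ast(\Gamma)$ generated by $\{z - \chi(z)\cdot 1 : z \in Z\}$; both $\pi$ and $\sigma$ factor through the quotient $C^\ast(\Gamma)/\mathtt K_\chi$, which carries the structure of a twisted group-$C^\ast$-algebra of the nilpotent quotient $\Gamma/Z$ of strictly smaller class. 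Applying the inductive hypothesis in the twisted setting identifies the primitive ideals of this quotient with kernels of representations induced from polarizations, and further shows that two different polarizations of the same $\chi$ yield induced representations with identical kernels. Pulling back to $C^\ast(\Gamma)$ then gives $\ker(\pi) = \ker(\sigma)$ and closes the induction. The delicate point is carrying the induction through in the twisted-algebra category so that the nilpotency class genuinely drops at each recursion; this is precisely where one exploits the fact that Theorem \ref{2t:Wienerviaprimitive} lets us reason modulo primitive ideals, bypassing the wild non-Type I behavior of individual unitary equivalence classes.
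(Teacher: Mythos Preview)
The paper does not prove this theorem: it is quoted as \cite[Proposition 5]{2Howe} and used as a black box to derive Theorem \ref{monominv}. So there is no ``paper's own proof'' to compare against; what you have written is an attempted reconstruction of Howe's argument, and it should be assessed on those terms.

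Your outline follows the Kirillov--Howe polarization philosophy and is broadly on the right track, but there is a structural gap in the induction. You state the theorem for $C^\ast(\Gamma)$ and then, at the inductive step, pass to the quotient $C^\ast(\Gamma)/\mathtt K_\chi$, which you correctly identify as a \emph{twisted} group $C^\ast$-algebra of $\Gamma/Z$. But your inductive hypothesis is formulated only for ordinary (untwisted) group $C^\ast$-algebras, so it does not apply to this quotient. You flag this yourself (``carrying the induction through in the twisted-algebra category''), but that is not a minor technicality to be deferred: it is the entire content of the inductive step. To make the scheme work you must either (a) strengthen the statement being proved to cover twisted group algebras $C^\ast(\Gamma,\omega)$ for all $2$-cocycles $\omega$ from the outset, so the hypothesis genuinely matches what you need after localizing, or (b) untwist at each stage by passing to a suitable central extension --- but then the nilpotency class need not drop and the induction stalls. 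Howe's original argument handles this by working with a statement that is already stable under the passage to twisted quotients.

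Two smaller points. First, your appeal to Mackey's irreducibility criterion for $\textup{Ind}_H^\Gamma(\tilde\chi)$ needs justification: the standard formulation requires either finite index or regularity hypotheses that are not automatic for an arbitrary polarization $H$ in a discrete non-Type~I group; in Howe's treatment irreducibility of the induced representation is obtained by a separate argument tied to the maximality of $H$. Second, invoking Theorem \ref{2t:Wienerviaprimitive} here is a red herring: that theorem concerns invertibility via projections onto primitive quotients, whereas what you need is the identification $\ker(\pi)=\ker(\sigma)$, which is a statement about the primitive ideal space itself and does not follow from the local invertibility principle.
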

In other words the map from the subclass of irreducible monomial representations to the primitive ideal space is surjective and as a conclusion the monomial representations generate the primitive ideal space. It is therefore not surprising that the class of irreducible monomial representations contains all the information which is necessary in order to study invertibility in the group algebras. As we will show, combining  Theorem \ref{2t:howe} with Theorem \ref{2t:neumark} leads to another Wiener Lemma:

\begin{theorem}\label{monominv}
An element $a\in C^\ast(\Gamma)$ is non-invertible if and only if there exists an irreducible monomial representation $\pi$ such that $\pi(a)$ has no bounded inverse.
\end{theorem}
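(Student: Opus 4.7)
The plan is to assemble three already-established ingredients: Neumark's spectral criterion (Theorem \ref{2t:neumark}), Howe's weak-equivalence theorem (Theorem \ref{2t:howe}), and the quotient/localisation argument in Corollary \ref{2tc:closedideals}. No new technical machinery is needed; the novelty lies only in combining them.

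The forward implication is immediate. An irreducible monomial representation $\sigma $ of $\Gamma $ extends to a unital $^\ast$-representation of $C^\ast(\Gamma )$, and any unital $^\ast$-homomorphism sends invertibles to invertibles; consequently, if $\sigma (a)$ has no bounded inverse then $a$ cannot be invertible in $C^\ast(\Gamma )$.

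For the converse, suppose $a\in C^\ast(\Gamma )$ is non-invertible. Since $\Gamma $ is nilpotent, $\ell ^1(\Gamma ,\C)$ is symmetric by Hulanicki's theorem and inverse-closed in $C^\ast(\Gamma )$, so $C^\ast(\Gamma )$ is itself a symmetric unital Banach-$^\ast$-algebra. Applying Theorem \ref{2t:neumark} produces an irreducible unitary representation $\pi $ of $\Gamma $ and a unit vector $u\in \mathcal H_\pi $ with $\pi (a)u=0$; in particular $\pi (a)$ is not invertible in $\mathcal B(\mathcal H_\pi )$. Theorem \ref{2t:howe} now supplies an irreducible monomial representation $\sigma $ of $\Gamma $ that is weakly equivalent to $\pi $, i.e., $\ker \pi =\ker \sigma =:\mathtt I$. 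Applying Corollary \ref{2tc:closedideals} with the ideal $\mathtt I\subseteq \ker \pi $ shows that the projection $\Phi _{\mathtt I}(a)$ is non-invertible in the quotient $C^\ast$-algebra $C^\ast(\Gamma )/\mathtt I$. Since $\sigma $ vanishes on $\mathtt I$, it factors through the quotient as an injective $^\ast$-homomorphism
\begin{equation*}
\tilde \sigma \colon C^\ast(\Gamma )/\mathtt I \longrightarrow \mathcal B(\mathcal H_\sigma ),\qquad \sigma =\tilde \sigma \circ \Phi _{\mathtt I}.
\end{equation*}
Injective $^\ast$-homomorphisms between $C^\ast$-algebras are isometric and hence spectrum-preserving, so the non-invertibility of $\Phi _{\mathtt I}(a)$ transfers to the non-invertibility of $\sigma (a)=\tilde \sigma (\Phi _{\mathtt I}(a))$ in $\mathcal B(\mathcal H_\sigma )$, producing the desired irreducible monomial witness.

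The argument has no real obstacle: the deep input is Howe's theorem, which guarantees that every primitive ideal of $C^\ast(\Gamma )$ is the kernel of some irreducible monomial representation, and the rest is a routine diagram chase in the category of unital $C^\ast$-algebras. One subtlety worth flagging is the passage from non-invertibility in the quotient to non-invertibility of the operator $\sigma (a)$; it is handled by spectral permanence of $C^\ast$-subalgebras, but may equivalently be seen as the forward implication applied to the faithful representation $\tilde \sigma $ of $C^\ast(\Gamma )/\mathtt I$.
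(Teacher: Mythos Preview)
Your proof is correct and follows essentially the same route as the paper: Neumark's criterion (Theorem \ref{2t:neumark}) to obtain an irreducible $\pi$ with $\pi(a)$ non-invertible, Howe's theorem (Theorem \ref{2t:howe}) to replace $\pi$ by a monomial $\sigma$ with the same kernel, and the quotient isomorphism $C^\ast(\Gamma)/\ker\pi \cong \sigma(C^\ast(\Gamma))$ together with spectral permanence to transfer non-invertibility. Your treatment of the easy direction (invertibles map to invertibles under unital $^\ast$-homomorphisms) is in fact more direct than the paper's, and your remark that $C^\ast(\Gamma)$ is symmetric needs no appeal to nilpotency or Hulanicki --- every $C^\ast$-algebra is symmetric --- though nilpotency is of course essential for invoking Howe.
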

For convenience of the reader we explain the ideas once more.
\begin{proof}
If $a$ is not invertible, then there exists an irreducible unitary representation $\pi$ and a non-zero vector $v\in\mathcal H _\pi$ such that $\pi(a)v=0$.
This implies that $\Phi_{\ker (\pi )}(a)$ is not invertible in $C^\ast(\Gamma)/\ker (\pi )$. Moreover, there exists an irreducible monomial representation $\rho$ with
$\ker (\rho )=\ker( \pi )$ (cf. Theorem \ref{2t:howe}) and hence
\begin{equation*}
\pi( C^\ast(\Gamma) ) \simeq C^\ast(\Gamma) / \ker (\pi )=  C^\ast(\Gamma) / \ker (\rho) \simeq\rho(C^\ast(\Gamma) ) \,.
\end{equation*}
Therefore, $\Phi_{\ker (\rho )}(a)$ and $\rho(a)$ are not invertible.

On the other hand, if $\pi(a)$ is not invertible for an irreducible monomial representation $\pi$, then $\Phi_{\ker (\pi ) }(a)$ is not invertible in the $C^\ast$-algebra
$C^\ast(\Gamma) / \ker (\pi)$.  Hence there exists an irreducible representation $\rho$ of $C^\ast(\Gamma) / \ker (\pi)$ such that $\rho(\Phi_{\ker (\pi )}(a))$ has a non-trivial kernel. Moreover, $\rho$ can be extended to a representation $\tilde \rho$ of $C^\ast(\Gamma)$ vanishing on $\ker (\pi )$. Therefore, $a$ is not invertible.
\end{proof}

\subsection{Maximality of primitive ideals}
In the previous subsection we saw that we can restrict our attention to irreducible monomial representations  for questions about invertibility.
Unfortunately, this subclass of irreducible representations might still be quite big.
We will use another general result about the structure of $\textup{Prim}(C^\ast(\Gamma))$ to make the analysis of invertibility in $C^\ast(\Gamma)$
easier.
\begin{theorem}[\cite{2Poguntke}] \label{2t:prim=max}
Let $\Gamma$ be a discrete nilpotent group. Then
\begin{equation*}
\textup{Prim}(C^\ast(\Gamma))=\textup{Max}(C^\ast(\Gamma))\,,
\end{equation*}
i.e., every primitive ideal of $C^\ast(\Gamma)$ is maximal.
\end{theorem}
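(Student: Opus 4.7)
The plan is to prove both inclusions. The easy direction, $\textup{Max}(C^\ast(\Gamma)) \subseteq \textup{Prim}(C^\ast(\Gamma))$, holds in any unital $C^\ast$-algebra: if $\mathtt M$ is maximal, then $C^\ast(\Gamma)/\mathtt M$ is a simple unital $C^\ast$-algebra and so admits a faithful irreducible representation $\rho$; then $\rho \circ \Phi_{\mathtt M}$ is an irreducible representation of $C^\ast(\Gamma)$ with kernel exactly $\mathtt M$. The real work lies in the reverse inclusion.

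For the nontrivial direction I would show that $C^\ast(\Gamma)/\mathtt I$ is simple for every primitive $\mathtt I$, arguing by induction on the nilpotency class $c$ of $\Gamma$. The base case $c=1$ is immediate from Gelfand's theory: $C^\ast(\Gamma)\simeq C(\widehat \Gamma)$ and primitive equals maximal. For the inductive step, fix $\mathtt I = \ker \pi$ with $\pi$ irreducible; by Theorem \ref{2t:howe} we may assume $\pi$ is monomial. Applying Schur's lemma to the center $Z=Z(\Gamma)$, the restriction $\pi|_Z$ is scalar-valued, given by a character $\tilde\chi \in \widehat Z$, and the central ideal $\mathtt J_{\tilde\chi} = \overline{\langle z-\tilde\chi(z)\cdot 1_{C^\ast(\Gamma)} : z \in Z\rangle}$ lies inside $\mathtt I$. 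The quotient $C^\ast(\Gamma)/\mathtt J_{\tilde\chi}$ is canonically isomorphic to the twisted group $C^\ast$-algebra $C^\ast(\Gamma/Z, \omega_{\tilde\chi})$, where $\omega_{\tilde\chi}$ is the $2$-cocycle on the nilpotent group $\Gamma/Z$, of strictly smaller class, obtained by lifting coset representatives back to $\Gamma$ and evaluating $\tilde\chi$ on the resulting $Z$-valued obstruction.

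It then suffices to prove that this twisted algebra is simple, for then the image of the primitive ideal $\mathtt I$ in $C^\ast(\Gamma/Z, \omega_{\tilde\chi})$ must be zero, giving $\mathtt I = \mathtt J_{\tilde\chi}$, hence maximal. When $\tilde\chi$ takes values in a finite subgroup of $\mathbb T$, the representation $\pi$ factors through a finite quotient of $\Gamma$ and $\pi(C^\ast(\Gamma))$ is a full matrix algebra, as illustrated in the rational Heisenberg case \eqref{2eq:pi_theta_s2}--\eqref{2eq:pi_theta_s22}, where simplicity is automatic. In the generic ``irrational'' case, the twisted algebra is a higher-dimensional noncommutative torus built iteratively from the upper central series of $\Gamma$, and one shows simplicity by an inductive version of the argument behind Theorem \ref{2t:simple}. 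The cleanest execution is to state and prove the theorem in the twisted category from the outset, so that the inductive hypothesis applies directly to $C^\ast(\Gamma/Z, \omega_{\tilde\chi})$.

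The main obstacle is precisely this simplicity statement in the irrational case. For $\Gamma = \mathbb H$ it reduces to the simplicity of the classical irrational rotation algebra $\mathcal R_\theta$, but in higher class one has to keep track of the cocycles arising from the successive central extensions along the upper central series and show that the relevant ``irrationality'' is preserved under each reduction. Poguntke's bookkeeping of these higher noncommutative tori is where the technical weight of the theorem lies; everything else in the proposal above is a structural scaffold that makes the reduction possible.
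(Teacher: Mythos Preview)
The paper does not prove this theorem; it is quoted from Poguntke \cite{2Poguntke} and used as a black box. So there is no ``paper's own proof'' to compare against, and your proposal should be judged on its own.

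Your inductive scaffold is the right one: reduce modulo the center to a twisted group $C^\ast$-algebra $C^\ast(\Gamma/Z,\omega_{\tilde\chi})$ of strictly smaller nilpotency class, and prove the statement in the twisted category so the hypothesis applies to the quotient. But the middle of your argument goes off the rails. You write ``it then suffices to prove that this twisted algebra is simple''; that is false in general and is \emph{not} what is needed. Already for $\Gamma=\h$ with $\theta$ rational, $C^\ast(\h)/\mathtt J_\theta\cong\mathcal Q_\theta$ is far from simple --- it carries an entire $\T^2$-worth of inequivalent irreducibles $\pi_\theta^{(s,t)}$ (cf.\ \eqref{2eq:pi_theta_s2}--\eqref{2eq:pi_theta_s22}). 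Relatedly, your ``rational case'' claim that $\pi$ factors through a finite quotient of $\Gamma$ is wrong: $\pi_\theta^{(s,t)}$ is finite-dimensional but does not factor through a finite quotient of $\h$ when $s$ or $t$ is irrational. The correct step is much simpler than simplicity: the image $\mathtt I/\mathtt J_{\tilde\chi}$ is a primitive ideal of $C^\ast(\Gamma/Z,\omega_{\tilde\chi})$ (primitivity passes to quotients by ideals contained in the kernel), and by the twisted inductive hypothesis it is maximal there; pulling back, $\mathtt I$ is maximal in $C^\ast(\Gamma)$. The invocation of Theorem \ref{2t:howe} to assume $\pi$ monomial is also a red herring --- you never use it. So keep the induction-in-the-twisted-category framework, drop the simplicity detour and the finite-quotient claim, and the argument becomes sound; the genuine technical content (which Poguntke supplies) is establishing the base and inductive steps uniformly for twisted algebras of nilpotent groups.
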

The simplification in the study of invertibility in $C^\ast(\h)$ was due to the simplicity of the
irrational rotation algebras $\mathcal R _\theta $,
which is equivalent to the maximality of the two-sided closed ideal $\mathtt J _\theta$.
We should note here that Theorem \ref{2t:simple} is usually proved by the construction of a unique trace on $\mathcal R _\theta $, which is rather complicated. Alternatively, let $\theta \in \T$ be irrational. Then it easily follows from Theorem \ref{2t:prim=max} and the fact that $\pi_{\theta}^{(s,t)}$ is an irreducible representation (cf. Lemma \ref{2lemma:irreducible}) with $\ker (\pi_{\theta}^{(s,t)}) = \mathtt J_\theta $ that
$\mathtt J_\theta $ is maximal. This is exactly the statement of Theorem \ref{2t:simple}.
In the next subsection we will see  applications of Theorem \ref{2t:prim=max}. It turns out that this representation theoretical result will eliminate the `non-Type I issues' exactly as the simplicity of irrational rotation algebras did for the group algebras of the discrete Heisenberg group.

\subsection{Examples}

The first example shows how to establish a Wiener Lemma for $\h$ from the general observation made in this section.
\begin{example}
Consider the monomial representations $\textup{Ind}_N^{\h}(\sigma_{\theta , s})$ of $\h$ as
 defined in \eqref{2e:inducedstonevonneumann} for irrational $\theta$ and arbitrary $s\in\T$.
 Obviously, one has for every $s\in \T$ that $\ker (\textup{Ind}_N^{\h}(\sigma_{\theta , s})) =\mathtt J_\theta$.

We will show that there is no bounded operator on $\ell^2(\Z,\C)$ which commutes with the operators $\textup{Ind}_N^{\h}(\sigma_{\theta , s})(x)$ and $\textup{Ind}_N^{\h}(\sigma_{\theta , s})(y)$ except
multiples of the identity operator. Let $\{\delta_k \,:\, k \in \Z\}$ be the standard basis of  $\ell^2(\Z,\C)$ and $\textup C = (c_{n,k})_{n,k\in \Z}$ an operator which commutes with $\textup{Ind}_N^{\h}(\sigma_{\theta , s})(x)$ and $\textup{Ind}_N^{\h}(\sigma_{\theta , s})(y)$.
From the equations
\begin{align*}
	e^{-2 \pi i s}\sum_{n \in \Z} c_{n,k} e^{-2 \pi i \theta n} \delta_n
	&=\textup  C \left(\textup{Ind}_N^{\h}(\sigma_{\theta , s})(x)  \delta_k \right)
	\\& =\textup{Ind}_N^{\h}(\sigma_{\theta , s})(x) (\textup C  \delta_k)
	\\&= e^{-2 \pi i s}e^{-2 \pi i \theta k}\sum_{n \in \Z} c_{n,k}  \delta_n
\end{align*}
and the fact that $\theta$ is irrational we can conclude that
$c_{n,k}= 0$ for all $n, k \in \Z$ with $n \not = k$.
On the other hand, for $k\in\Z$
\begin{align*}
 c_{k,k} \delta_{k+1}
 &=\textup{Ind}_N^{\h}(\sigma_{\theta , s})(y)(\textup C  \delta_k)
  \\&
 = \textup C \left(\textup{Ind}_N^{\h}(\sigma_{\theta , s}) (y)  \delta_k\right)
 \\& = c_{k+1,k+1} \delta_{k+1}  \,.
\end{align*}
Therefore, the only operators in the commutant of $\textup{Ind}_N^{\h}(\sigma_{\theta , s}) (\h)$ are scalar multiples of the identity, which is equivalent to the irreducibility of the representation $\textup{Ind}_N^{\h}(\sigma_{\theta , s})$ by Schur's Lemma.
Hence, the kernel of the irreducible monomial representation $\textup{Ind}_N^{\h}(\sigma_{\theta , s})$ is
a maximal two-sided ideal (cf. Theorem \ref{2t:prim=max})
 given by  $\mathtt J _\theta$.

For every irreducible representation $\pi$ of $\h$ with $\mathtt J _\theta \subseteq \ker (\pi)$ one has
$\ker (\pi ) =\mathtt J _\theta$ due to the maximality of $\mathtt J _\theta$ which we deduce from the irreducibility of
$\textup{Ind}_N^{\h}(\sigma_{\theta , s})$.

 Consider $\theta =\frac{n}{d}$ with relatively prime $n,d \in \N$. We note that analysing invertibility in $\mathcal Q _\theta $ reduces to the study of monomial representations as well. Set
\begin{equation*}
\h / Z(d) \coloneqq
\left \{ \left( \begin{matrix}
	1 & a & \bar{b} \\
	0 & 1 & c \\
	0 & 0 & 1
\end{matrix}\right), \quad a,c \in \Z \,\text{and}\, \bar{b} \in \Z /d\Z \right\}\,, \quad d\in \N\,,
\end{equation*}
and note the isomorphism $\mathcal Q _\theta \cong C^\ast(\h / Z(d))$.
The nilpotent group $\h / Z(d)$ is of Type I since $\h / Z(d)$ has normal abelian subgroups of finite index, e.g.,
\begin{equation*}
\left \{ \left( \begin{matrix}
	1 & a d & \bar{b} \\
	0 & 1 & c \\
	0 & 0 & 1
\end{matrix}\right), \quad a,c \in \Z \,\text{and}\, \bar{b} \in \Z /d\Z \right\} \,.
\end{equation*}
Hence, all irreducible representations are monomial by Theorem \ref{2typeI/mono}.

A Wiener Lemma can now be deduced from Theorem \ref{2t:prim=max}.

Note that in the general study of invertibility in this example we have not used Allan's local principle or any results from Section \ref{2section4} explicitly.
\end{example}

We give another example of a group where Theorem \ref{2t:prim=max} simplifies the analysis.
\begin{example}
Let us denote by $\mathbb G$ the multiplicative group given by
\begin{equation*}
\left \{
\left(
	\begin{matrix}
1 & a & c & f
	\\
0 & 1 & b & e
	\\
0 & 0 & 1 & d
	\\
0 & 0 & 0 & 1
	\end{matrix}
\right)
\,:\, a,b,c,d,e,f \in \Z \right \} \,.
\end{equation*}
One can easily verify that the centre of this group is given by
\begin{equation*}
\left \{
\left(
	\begin{matrix}
1 & 0 & 0 & f
	\\
0 & 1 & 0 & 0
	\\
0 & 0 & 1 & 0
	\\
0 & 0 & 0 & 1
	\end{matrix}
\right)
\,:\, f \in \Z \right \} \simeq \Z,
\end{equation*}
and hence the corresponding  central sub-algebra $\ell^1(\Z)$
is exactly the same as it was in the case of the discrete Heisenberg group.
It is therefore not surprising that the invertibility problem can be
addressed in a similar fashion.

Let us construct monomial representations, which will be sufficient to check
global invertibility (cf. Theorem \ref{monominv}).

First note that $\mathbb G$ can be identified with the semi-direct product $G_1\rtimes G_2$ of the normal abelian subgroup
\begin{equation*}
G_1\coloneqq \left \{
\left(
	\begin{matrix}
1 & 0 & c & f
	\\
0 & 1 & b & e
	\\
0 & 0 & 1 & 0
	\\
0 & 0 & 0 & 1
	\end{matrix}
\right)
\,:\, b,c,e,f \in \Z \right \} \simeq \Z^4
\end{equation*}
and the  abelian subgroup
\begin{equation*}
G_2 \coloneqq \left \{
\left(
	\begin{matrix}
1 & a & 0 & 0
	\\
0 & 1 & 0 & 0
	\\
0 & 0 & 1 & d
	\\
0 & 0 & 0 & 1
	\end{matrix}
\right)
\,:\, a,d \in \Z \right \} \simeq \Z^2\,.
\end{equation*}
In such a situation the construction of induced representations becomes very easy. We refer to \cite[Section 2.4]{2KT} for all the details.
Now let $\sigma_{\theta_b,\theta_c,\theta_e,\theta_f}$ be the one-dimensional representation of $G_1$ given by
\begin{equation*}
\sigma_{\theta_b,\theta_c,\theta_e,\theta_f} \left( \left(
	\begin{matrix}
1 & 0 & c & f
	\\
0 & 1 & b & e
	\\
0 & 0 & 1 & 0
	\\
0 & 0 & 0 & 1
	\end{matrix}
 \right)\right)= e^{2 \pi i \theta_b b}e^{2 \pi i \theta_c c}e^{2  \pi i \theta_e e }e^{2 \pi i \theta_f f }\,.
\end{equation*}
The inclusion map from $G_2$ to $\mathbb G$ will serve as a cross-section.
The induced representation
$
\textup{Ind}_{G_1}^{\mathbb G}(\sigma_{\theta_b,\theta_c,\theta_e,\theta_f})
$
(is unitarily equivalent to a representation which) acts on the Hilbert space $\ell^2(\Z^2,\C)$ and is given by
\begin{equation}\label{2eq:ind}\begin{aligned}
&\left(\textup{Ind}_{G_1}^{\mathbb G}(\sigma_{\theta_b,\theta_c,\theta_e,\theta_f})\left(
\left(\begin{matrix}
1 & 0 & c & f
	\\
0 & 1 & b & e
	\\
0 & 0 & 1 & 0
	\\
0 & 0 & 0 & 1
	\end{matrix}
	\right)
	\left(
	\begin{matrix}
1 & a & 0 & 0
	\\
0 & 1 & 0 & 0
	\\
0 & 0 & 1 & d
	\\
0 & 0 & 0 & 1
	\end{matrix}
\right)\right) F\right)(k,l)
\\&= e^{2  \pi i \theta_b b} e^{2 \pi i \theta_c (c-kb)}e^{2  \pi i \theta_e (e+lb) }e^{2 \pi i \theta_f (f+lc-ke-klb) } F(k-a,l-d)\,,
\end{aligned}\end{equation}
for every $a,b,c,d,e,f,k,l \in \Z$ and $F\in \ell^2(\Z^2,\C)$.

The localisation fibres are indexed by $\theta_f$. It is clear that for every irrational $\theta_f$  and arbitrary $\theta_b, \theta_c, \theta_e$,
\begin{equation*}\ker \left( \textup{Ind}_{G_1}^{\mathbb G}(\sigma_{\theta_b,\theta_c,\theta_e,\theta_f})\right)= \mathtt J_{\theta_f}\,.\end{equation*}

In the case of irrational $\theta_f$, the commutant
of $\textup{Ind}_{G_1}^{\mathbb G}(\sigma_{\theta_b,\theta_c,\theta_e,\theta_f})(\mathbb G)$ in $\mathcal B(\ell^2(\Z^2,\C))$ is trivial which is equivalent to irreducibility by Schur's Lemma. Hence, for irrational $\theta_f$ the two-sided closed ideal $\mathtt J_{\theta_f}$ is maximal by Theorem \ref{2t:prim=max} and one has to consider only a single representation, e.g., the one given in \eqref{2eq:ind} for fixed parameters
$\theta_b,\theta_c,\theta_e$, to study invertibility on these fibres.
\end{example}

\subsection{A kernel condition and finite-dimensional representations}

 The  proof of Proposition \ref{2prop} provides an approximation argument which allows restricting oneself to finite-dimensional representations for the purpose of proving invertibility. This result can be reinterpreted as a density argument. The finite-dimensional irreducible representations of $\h$ correspond to `rational' points in the dual of $\h$. We know that the intersection of all irreducible representations $\pi$ with $\pi (z)=e^{2\pi i \theta}$ coincides with $\mathtt J _\theta$. In the same way as one shows that no non-zero element in $C(\T)$ (which is isomorphic to $C^*(\Z)$) vanishes at all rational points, one can prove that
$$
\bigcap _{\text{rational} \,\theta \in \T } \mathtt J_\theta = \{0\}\,.
$$
We will show that this empty-intersection condition actually implies that for checking invertibility of $a\in C^\ast(\h)$ it is enough to check
the invertibility of the evaluations  $\pi(a)$ for all finite-dimensional irreducible representations of $C^\ast (\h)$ \textit{provided that the inverses $\pi(a)^{-1}$ of these elements are uniformly bounded in norm}.

Let $\mathcal A$  and $\mathcal B _t$, for all $t\in I$ for some index set $I$,  be  $C^\ast$-algebras.
Let us denote by $\mathfrak F$ a family  of $^\ast$-morphisms $\phi_t \colon \mathcal A \longrightarrow \mathcal B_t$, $t\in I$, which fulfils
\begin{itemize}
\item for all $t\in I$ one has $\mathtt J_t = \ker (\phi _t )$ is a two-sided closed ideal of $\mathcal A$, hence $\mathcal A _t= \mathcal A /  \mathtt J_t  $ is a $C^\ast$-algebra with quotient norm $\|\cdot \|_t$;
\item $\bigcap _{t \in I} \mathtt J _t = \{0\}$.
\end{itemize}
Furthermore, let us denote by $\mathcal A_I$ the set of elements  $a=(a_t)_{t\in I} \in \prod_{t\in I} \mathcal A_t$ with $
\|a\|_I \coloneqq \sup_{t\in I}\|a_t \|_t< \infty$; $\mathcal A_I$ together with the norm $\|\cdot\|_I$ forms a $C^\ast$-algebra. Let $\Phi \colon\mathcal A \longrightarrow \mathcal A_I$ be defined by $a\mapsto (\phi_t(a))_{t\in I}$.
Since
$$
\bigcap _{t\in I } \mathtt J_t = \{0\}\,,
$$
one has that $\Phi$ is a bijective map from $\mathcal A$ to $\Phi(\mathcal A)$.
The $C^\ast$-algebras $\Phi (\mathcal A)$ and $\mathcal A_I$ form a Wiener pair. Hence, $a\in\mathcal A$ is invertible if and only if
 $\phi_t (a)$ is invertible for all $t\in I$ and
$\| \phi_t (a) ^{-1} \|_t$ is uniformly bounded in $t$.

\begin{example}
We apply  these ideas to $C^\ast (\h)$ and set $$\mathfrak F _{\h}= \{ \pi \in \widehat{\h} \,\colon\, \text{$\pi$ a finite dimensional representation}\}$$
in order to  get an algebraic interpretation of Proposition \ref{2prop}.
\end{example}
\begin{example}
Definitions and results that are  used in this example can be found in \cite{2Howe2} by Howe.
Consider a finitely-generated nilpotent torsion-free group $\Gamma$. The set of kernels of finite-dimensional representations forms a dense subset of $\textup{Prim}(C^\ast(\Gamma))$ with respect to the hull-kernel topology. Since all $C^\ast$-algebras are semi-simple one gets that for every dense subset $\mathfrak J \subseteq \textup{Prim}(C^\ast(\Gamma))$ the following holds
$$
\bigcap_{\mathtt J \in \mathfrak J}\mathtt J = \bigcap_{\mathtt J \in \overline{\mathfrak J}}\mathtt J= \bigcap_{\mathtt J \in \textup{Prim}(C^\ast(\Gamma))}\mathtt J =\{0\}\,.
$$
Hence, for verifying invertibility in $C^\ast(\Gamma)$, the study of the evaluations of the finite-dimensional representations -- combined with a boundedness condition -- is sufficient.

Suppose that $\Gamma$ is also elementarily-exponentiable --  Howe  labels such  groups  to have a well-defined `Lie-algebra', say $\mathcal L$.
 Then the finite dimensional representations correspond to finite quasi-orbits  of a canonical action of $\Gamma$ on $\mathcal L$
and the representation theory of $\Gamma$ is closely related to the one of its Mal'cev completion.
\end{example}

A systematic treatment of group-$C^\ast$-algebras $C^\ast(\Gamma)$ whose finite-dimensional representations separate points of $C^\ast(\Gamma)$ can be found in Section 4 of \cite{2BL}.

\section{A connection to Time-Frequency-Analysis via localisations}\label{2section6}
In this section we formulate yet another Wiener Lemma for $\ell^1(\h,\C)$ which involves Stone-von Neumann representations.
These unitary representations of the discrete Heisenberg group are highly reducible and therefore, not the first choice for invertibility investigations (cf. Theorem \ref{2t:neumark}). However, by exploring a connection from localisations of $\ell^1(\h,\C)$ to twisted convolution algebras we establish a link to  Time-Frequency-Analysis. In this discipline of mathematics Stone-von Neumann representations are of great importance.
\subsection{Localisations and twisted convolution algebras}
In \cite{2GSV} we determined the explicit form of the localisation ideals $\mathtt J _m$ in order to formulate Allan's local principle for the group algebra $\ell^1(\h,\C)$ of the discrete Heisenberg group. Let us recall this result.

We write a typical element $f\in \ell^1(\h,\C)$ in the form:
\begin{equation*}
	f=\sum_{(k,l,m)\in \Z^3} f_{(k,l,m)}x^k y^l z^m \,,
\end{equation*}
with $f_{(k,l,m)}\in \C$ and $\sum_{(k,l,m)\in \Z^3} |f_{(k,l,m)}| < \infty$.
We identify the centre of $\lh$ with $\ell^1(\Z,\C)$ since the centre
 of the group $\h$ is generated by powers of $z$.
The maximal ideal space $\textup{Max}(\ell^1(\Z,\C))$ is canonically homeomorphic to $\widehat{\Z}\cong\T$.
The smallest closed two-sided ideal in $\ell^1(\h,\C)$ containing $m_\theta \in \textup{Max}(\ell^1(\Z,\C))$, $\theta \in \mathbb{T}$, is given by the subset
$\mathtt J _\theta \subset \ell^1(\h,\C)$ which consists of all elements $f \in \ell^1(\h,\C)$ such that
	\begin{equation*}
		f^\theta \coloneqq  \sum_{(k,l,m)\in\Z^3} f_{(k,l,m)} x^k y^l e^{2\pi im\theta } = 0_{\ell^1(\h,\C)}\,.
	\end{equation*}

The next definition plays an important role in the field of Time-Frequency-Analysis.
Fix $\theta \in \T$.
The \textit{twisted convolution} $ \natural_\theta $ on $\ell^1(\Z^2,\C)$ is defined as follows. Let $f,g \in\ell^1 (\Z^2,\C)$, then
\begin{equation*}
(f \natural_\theta  g)_{m,n} = \sum_{k,l \in \Z} f_{k,l}g_{m-k,n-l} e^{2\pi i(m-k)l \theta }\,.
\end{equation*}
Moreover, define the involution $f^\ast_{k,l}=\overline{f_{-k,-l}} e^{2\pi ikl\theta }$ for every $f \in \ell^1(\Z^2,\C)$. The triple
$(\ell^1 (\Z^2,\C),\natural_{\theta },^\ast)$ forms a Banach-$^\ast$-algebra --- the so called \textit{twisted convolution algebra}.

The Banach-algebras $\mathcal Q_\theta  \coloneqq \ell^1(\h,\C)/\mathtt J _\theta $ and $(\ell^1 (\Z^2,\C),\natural_{\theta },^\ast)$ are connected by the $^\ast$-isomorphism $\kappa\colon \mathcal Q _\theta \longrightarrow (\ell^1 (\Z^2,\C),\natural_{\theta },^\ast)$ defined by
\begin{equation*}
\kappa (\Phi_\theta  (f))= f^\theta \,.
\end{equation*}

\subsection{Wiener's Lemma for twisted convolution algebras}

Principal results were obtained by Janssen \cite{2Janssen}  and
 Gr\"ochenig and Leinert \cite{2GL}.
Let $\alpha, \beta$ be strictly positive real parameters and let $\theta = \alpha\beta$.
On the Hilbert space $L^2(\R,\C)$ define the translation operator $\textup{T}_\alpha$ and the modulation operator $\textup{M}_\beta$ as follows:
\begin{equation}\label{2def:stonevneu}
(\textup{T}_\alpha F)(t) = F(t+\alpha) \quad \text{and} \quad (\textup{M}_\beta F)(t)= e^{2 \pi i \beta t} F(t)
\end{equation}
where $F\in L^2(\R,\C)$ and $t\in\R$.
The representation $\pi_{\alpha,\beta}$ of  $(\ell^1 (\Z^2,\C),\natural_{\theta},^\ast)$ on $L^2(\R,\C)$ is defined as follows:
for each $f\in\ell^1(\Z^2,\C)$, let
\begin{equation*}
\pi_{\alpha,\beta} (f) =\sum_{k,l\in \Z} f_{k,l} \textup{T}_\alpha ^k \textup{M}_\beta^l \,.
\end{equation*}

Gr\"ochenig and Leinert established the following
Wiener Lemma for twisted convolution algebras.
\begin{theorem}[{\cite[Lemma 3.3]{2GL}}] \label{2t:wt}
Suppose that $\theta \in \T$, $\alpha\beta=\theta \mod 1$, and that
$f\in  \ell^1 (\Z^2,\C)$ and $\pi_{\alpha,\beta}(f)$ is invertible on $L^2(\R,\C)$. Then $f$ is invertible in  $(\ell^1 (\Z^2,\C),\natural_{\theta},^\ast)$.
\end{theorem}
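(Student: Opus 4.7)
The plan is to transfer the invertibility question to the group algebra quotient $\mathcal{Q}_\theta=\ell^1(\h,\C)/\mathtt{J}_\theta$ via the $^*$-isomorphism $\kappa$, and then to exploit the representation theory of $\h$ developed in Sections \ref{2section3}--\ref{2section4}. Pick any lift $F\in\ell^1(\h,\C)$ with $\kappa(\Phi_\theta(F))=f$. Because $\kappa$ is a $^*$-isomorphism, $f$ is invertible in $(\ell^1(\Z^2,\C),\natural_\theta,{}^*)$ if and only if $\Phi_\theta(F)$ is invertible in $\mathcal{Q}_\theta$. On the Hilbert-space side, the operators $\textup{T}_\alpha,\textup{M}_\beta$ satisfy $\textup{T}_\alpha\textup{M}_\beta=e^{2\pi i\alpha\beta}\textup{M}_\beta\textup{T}_\alpha=e^{2\pi i\theta}\textup{M}_\beta\textup{T}_\alpha$, so the assignment $x\mapsto\textup{T}_\alpha$, $y\mapsto\textup{M}_\beta$, $z\mapsto e^{2\pi i\theta}$ extends $\pi_{\alpha,\beta}$ to a unitary representation $\tilde\pi_{\alpha,\beta}$ of $\h$ on $L^2(\R,\C)$ that annihilates $\mathtt{J}_\theta$. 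The hypothesis therefore says exactly that $\tilde\pi_{\alpha,\beta}(F)$ is invertible on $L^2(\R,\C)$.

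The central algebraic fact I would use is that $\mathcal{Q}_\theta$ and its enveloping $C^*$-algebra $\mathcal{Q}_\theta^{C^*}\coloneqq C^*(\h)/\overline{\mathtt{J}_\theta}$ form a Wiener pair. Indeed, $\ell^1(\h,\C)$ is symmetric by Hulanicki's theorem, symmetry passes to quotients by closed two-sided $^*$-ideals, and Theorem \ref{2t:Wienerpair} then applies to $\mathcal{Q}_\theta$. Hence it suffices to prove that the image of $\Phi_\theta(F)$ in $\mathcal{Q}_\theta^{C^*}$ is invertible. For irrational $\theta$ this is essentially immediate: by Theorem \ref{2t:simple} the quotient $\mathcal{Q}_\theta^{C^*}$ is the irrational rotation algebra $\mathcal{R}_\theta$, which is simple, so the nonzero $^*$-representation that $\tilde\pi_{\alpha,\beta}$ induces on $\mathcal{Q}_\theta^{C^*}$ is injective and therefore isometric. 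Invertibility of $\tilde\pi_{\alpha,\beta}(F)$ in $\mathcal{B}(L^2(\R,\C))$ pulls back to invertibility in $\mathcal{Q}_\theta^{C^*}$, and the Wiener pair property closes the argument.

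For rational $\theta=p/q$ the quotient $\mathcal{Q}_\theta^{C^*}$ is no longer simple, and the main obstacle is to show that $\tilde\pi_{\alpha,\beta}$ still induces a faithful representation of it. I would decompose $L^2(\R,\C)$ as a direct integral over $(s,t)\in\T^2$ so that $\tilde\pi_{\alpha,\beta}\cong\int^{\oplus}\pi_\theta^{(s,t)}\,ds\,dt$, using the finite-dimensional representations of \eqref{2eq:pi_theta_s2}--\eqref{2eq:pi_theta_s22}. Since by \cite{2BEEK} the family $\{\pi_\theta^{(s,t)}\,:\,s,t\in\T\}$ exhausts the irreducible representations of $\h$ with $z\mapsto e^{2\pi i\theta}$, the intersection of their kernels inside $C^*(\h)$ is $\overline{\mathtt{J}_\theta}$, which gives faithfulness. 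Invertibility of $\tilde\pi_{\alpha,\beta}(F)$ on $L^2(\R,\C)$ then yields uniform invertibility of all $\pi_\theta^{(s,t)}(F)$ and hence invertibility of $\Phi_\theta(F)$ in $\mathcal{Q}_\theta^{C^*}$; the Wiener pair property again finishes the argument. The real work, and the only place where anything beyond soft functional analysis is needed, lies in making this direct-integral decomposition and the accompanying uniform norm control precise in the rational case.
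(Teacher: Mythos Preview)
Your proposal is correct and follows essentially the same architecture as the paper's proof: reduce to $\mathcal{Q}_\theta$ via $\kappa$, use symmetry of $\mathcal{Q}_\theta$ (the paper's Lemma \ref{22lemma:qthetasymmetric}) to get inverse-closedness in the $C^\ast$-quotient, and then show that $\pi_{\alpha,\beta}$ is faithful on $C^\ast(\h)/\mathtt{J}_\theta$---for irrational $\theta$ via simplicity of $\mathcal{R}_\theta$, for rational $\theta$ via a direct integral decomposition. The only real difference is in the rational case: the paper invokes Baggett's decomposition \eqref{2eq:decompofL^2} of $\pi_{\alpha,\beta}$ into the monomial representations $\textup{Ind}^{\h}_N(\sigma_{\theta,t})$ over $t\in(0,\theta]$ and reads off $\ker(\pi_{\alpha,\beta})=\bigcap_t\ker(\textup{Ind}^{\h}_N(\sigma_{\theta,t}))=\mathtt{J}_\theta$ directly, whereas you propose a finer decomposition into the finite-dimensional irreducibles $\pi_\theta^{(s,t)}$ over $\mathbb{T}^2$. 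Your route makes the faithfulness conclusion more transparent (since by \cite{2BEEK} these exhaust the irreducibles over $\mathtt{J}_\theta$), but it requires you to actually establish that specific decomposition of $L^2(\R,\C)$, which is a bit more work than citing Baggett; the paper's decomposition is coarser but comes ready-made.
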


The representation $\pi_{\alpha,\beta}$ of $(\ell^1 (\Z^2,\C),\natural_{\theta},^\ast)$ induces a representation of $\h$, $\ell^1(\h,\C)$ and $\mathcal Q _\theta$ on $L^2(\R,\C)$ in a canonical way:
\begin{equation*}
\pi_{\alpha,\beta} (x)= \textup{T}_\alpha,\quad \pi_{\alpha,\beta} (y)=\textup{M}_\beta , \quad \text{and}\quad \pi_{\alpha,\beta} (z)= e^{2 \pi i \theta} \,.
\end{equation*}
The representations $\pi_{\alpha,\beta}$ appear in the literature under various names: Stone-von Neumann, Weyl-Heisenberg or Schr\"odinger representations.

As an immediate corollary of Theorem \ref{2t:wt} one obtains the following Wiener Lemma for the discrete Heisenberg group.

\begin{theorem}\label{2t:wienerfromconv}
Let $f\in \lh$, then $f$ is invertible if and only if $\pi_{\alpha,\beta}(f)$ is invertible for each non-zero pair $\alpha ,\beta \in \R$.
\end{theorem}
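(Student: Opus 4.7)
\medskip

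\noindent\textbf{Proof proposal.} The plan is to deduce Theorem \ref{2t:wienerfromconv} from the combination of Allan's local principle applied to $\ell^1(\h,\C)$, the $^\ast$-isomorphism $\kappa\colon \mathcal Q_\theta\longrightarrow(\ell^1(\Z^2,\C),\natural_\theta,{}^\ast)$ recalled in Section \ref{2section6}, and the Gr\"ochenig--Leinert result (Theorem \ref{2t:wt}). The forward implication is routine; all the work is in the converse.

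The direction ($\Rightarrow$) is immediate: if $f$ is invertible in $\ell^1(\h,\C)$, then for any non-zero pair $\alpha,\beta\in\R$, the map $\pi_{\alpha,\beta}$ is a $^\ast$-representation of $\ell^1(\h,\C)$ on $L^2(\R,\C)$ (it is the extension of the unitary representation $x\mapsto \textup{T}_\alpha$, $y\mapsto \textup{M}_\beta$, $z\mapsto e^{2\pi i\alpha\beta}$ of $\h$), and $^\ast$-homomorphisms preserve invertibility, so $\pi_{\alpha,\beta}(f)\pi_{\alpha,\beta}(f^{-1})=\textup{I}$.

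For the direction ($\Leftarrow$), suppose $\pi_{\alpha,\beta}(f)$ is invertible on $L^2(\R,\C)$ for every non-zero pair $\alpha,\beta\in\R$. By Allan's local principle applied to $\ell^1(\h,\C)$, it suffices to prove that $\Phi_\theta(f)$ is invertible in $\mathcal Q_\theta=\ell^1(\h,\C)/\mathtt J_\theta$ for every $\theta\in\T$. Fix such a $\theta$ and choose a non-zero pair $\alpha,\beta\in\R$ with $\alpha\beta\equiv\theta\pmod 1$; for instance $\alpha=1$, $\beta=\theta$ if $\theta\neq 0$, and $\alpha=\beta=1$ if $\theta=0$. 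Because $\pi_{\alpha,\beta}(z)=e^{2\pi i\theta}1_{\mathcal B(L^2(\R,\C))}$, the representation $\pi_{\alpha,\beta}$ annihilates the generator $z-e^{2\pi i\theta}$ of $\mathtt J_\theta$, hence factors through $\mathcal Q_\theta$. Composing with the $^\ast$-isomorphism $\kappa$, the resulting representation of $(\ell^1(\Z^2,\C),\natural_\theta,{}^\ast)$ is precisely the Stone--von Neumann representation $\pi_{\alpha,\beta}$ appearing in Theorem \ref{2t:wt}, and its value at $f^\theta=\kappa(\Phi_\theta(f))$ is invertible by assumption. Theorem \ref{2t:wt} therefore yields invertibility of $f^\theta$ in the twisted convolution algebra, and pulling back by $\kappa^{-1}$ shows that $\Phi_\theta(f)$ is invertible in $\mathcal Q_\theta$.

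As $\theta\in\T$ was arbitrary, Allan's local principle produces an inverse for $f$ in $C^\ast(\h)$. Since $\ell^1(\h,\C)$ is inverse-closed in $C^\ast(\h)$ (Theorem \ref{2t:Wienerpair}), this inverse lies in $\ell^1(\h,\C)$, completing the argument. The only mildly delicate point is the consistent choice of a non-zero pair $(\alpha,\beta)$ realising each prescribed value of $\theta$, which is however trivial; all the genuine analytic content has been absorbed into Theorem \ref{2t:wt}.
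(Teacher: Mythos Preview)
Your proof is correct and follows precisely the route indicated in the paper: combine Allan's local principle with Theorem \ref{2t:wt} via the isomorphism $\kappa$. One small inconsistency: having applied Allan's local principle to $\ell^1(\h,\C)$ itself, you obtain invertibility in $\ell^1(\h,\C)$ directly, so the final appeal to inverse-closedness in $C^\ast(\h)$ is superfluous (though harmless).
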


\begin{proof}
The result follows by combining Allan's local principle with Wiener's Lemma for twisted convolution algebras.
\end{proof}

Finally, we give an alternative proof of Wiener's Lemma for the twisted convolution algebra which relies on the representation theory of $\h$ only.
We start with the following lemmas.

\begin{lemma}\label{22lemma:qthetasymmetric}
The twisted convolution algebra $(\ell^1 (\Z^2,\C),\natural_{\theta },^\ast)$ is symmetric.
\end{lemma}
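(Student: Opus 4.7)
The plan is to deduce this lemma from the already-established symmetry of $\lh$, transporting it through the surjective unital $^\ast$-homomorphism $\Psi := \kappa \circ \Phi_\theta$ from $\lh$ onto $(\ell^1(\Z^2,\C),\natural_\theta,{}^\ast)$, so that essentially no new analytic work is needed beyond quoting Hulanicki's theorem and the standard spectral inclusion for homomorphisms.

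First I would recall Hulanicki's theorem (quoted earlier in the paper), which asserts that $\lh$ is a symmetric Banach-$^\ast$-algebra since $\h$ is a countable discrete nilpotent group. Next I would note that the canonical projection $\Phi_\theta\colon \lh \longrightarrow \mathcal Q_\theta$ is a unital $^\ast$-homomorphism onto the quotient by the closed two-sided $^\ast$-ideal $\mathtt J_\theta$, and that by the preceding discussion $\kappa$ is a $^\ast$-isomorphism from $\mathcal Q_\theta$ onto $(\ell^1(\Z^2,\C),\natural_\theta,{}^\ast)$. Composition gives that $\Psi$ is a surjective unital $^\ast$-homomorphism.

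The key step is the standard spectral inclusion for unital homomorphisms of Banach algebras: for every $f\in \lh$,
\begin{equation*}
\sigma_{(\ell^1(\Z^2,\C),\natural_\theta,{}^\ast)}(\Psi(f)) \;\subseteq\; \sigma_{\lh}(f),
\end{equation*}
since $\Psi$ sends any inverse of $f-\lambda\cdot 1_{\lh}$ to an inverse of $\Psi(f)-\lambda\cdot 1$ in the target. Then, given an arbitrary $g\in\ell^1(\Z^2,\C)$, I would pick $f\in\lh$ with $\Psi(f)=g$; exploiting that $\Psi$ preserves the involution yields $g^\ast\natural_\theta\,g = \Psi(f^\ast f)$, so
\begin{equation*}
\sigma(g^\ast\natural_\theta\,g) \;=\; \sigma(\Psi(f^\ast f)) \;\subseteq\; \sigma_{\lh}(f^\ast f) \;\subseteq\; [0,\infty),
\end{equation*}
the final inclusion being exactly the symmetry of $\lh$. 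This yields the claim.

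The only point requiring a bit of care is verifying that $\mathtt J_\theta$ is genuinely $^\ast$-stable so that $\kappa$ really is a $^\ast$-isomorphism; this is already implicit in the way $\kappa$ was introduced, and in any case follows directly from the coefficient description $\mathtt J_\theta=\{f\in\lh\,\colon\,f^\theta=0\}$ together with a short check that the involution $f\mapsto f^\ast$ on $\lh$ descends to the twisted involution $f^\ast_{k,l}=\overline{f_{-k,-l}}\,e^{2\pi ikl\theta}$ on $\ell^1(\Z^2,\C)$.
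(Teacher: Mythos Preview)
Your proof is correct and follows essentially the same route as the paper: both arguments transport the symmetry of $\lh$ to the twisted convolution algebra via the surjective unital $^\ast$-homomorphism $\Phi_\theta$ (together with the $^\ast$-isomorphism $\kappa$), using the resulting spectral inclusion $\sigma_{\mathcal Q_\theta}(\Phi_\theta(f^\ast f))\subseteq \sigma_{\lh}(f^\ast f)\subseteq [0,\infty)$. The only cosmetic difference is that the paper justifies this spectral inclusion by appealing to Allan's local principle, whereas you invoke the more elementary fact that unital homomorphisms preserve inverses; your justification is in fact the cleaner one.
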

\begin{proof}
First, recall that the Banach algebras $(\ell^1 (\Z^2,\C),\natural_{\theta },^\ast)$ and $\mathcal Q_\theta $ are $^\ast$-isomorphic. For every $f\in\ell^1(\h,\C)$ the following holds: if $\Phi _\theta  (f) \in \mathcal Q _\theta $ is not invertible, then $f$ is not invertible in $\lh$ by Allan's local principle. Hence, $\sigma_{\mathcal Q _\theta }(\Phi _\theta  (f)) \subseteq \sigma_{\lh}(f)$ for every $f\in\ell^1(\h,\C)$. In particular, for every $f\in\lh$,
\begin{equation*}\sigma_{\mathcal Q _\theta }(\Phi _\theta  (f^\ast f)) \subseteq \sigma_{\lh}(f^\ast f)\subseteq [0,\infty)\end{equation*} by the symmetry of $\lh$.
\end{proof}

\begin{lemma}\label{2lemma:convwienerlemmma1}
Consider an irrational $\theta\in \T$. Then $f \in (\ell^1 (\Z^2,\C),\natural_{\theta },^\ast)$ is invertible if and only if
$\pi_{\alpha,\beta}(f)$ has a bounded inverse, where $\alpha\beta=\theta \mod 1$.
\end{lemma}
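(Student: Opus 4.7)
The strategy is to deduce both implications from Theorem \ref{2t:neumark} combined with the simplicity of the irrational rotation algebra $\mathcal{R}_\theta$ (Theorem \ref{2t:simple}), entirely bypassing the original argument of Gr\"ochenig--Leinert.

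The forward direction is routine: $\pi_{\alpha,\beta}$ is a $^\ast$-representation of $(\ell^1(\Z^2,\C),\natural_\theta,^\ast)$ on $L^2(\R,\C)$, so it sends invertible elements to invertible operators.

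For the converse, assume $\pi_{\alpha,\beta}(f)$ has a bounded inverse, and suppose for contradiction that $f$ is not invertible in the twisted convolution algebra. By Lemma \ref{22lemma:qthetasymmetric} the algebra is symmetric, so Theorem \ref{2t:neumark} supplies an irreducible $^\ast$-representation $\rho$ together with a unit vector $v\in\mathcal{H}_\rho$ satisfying $\rho(f)v=0$. Transporting $\rho$ back to $\mathcal{Q}_\theta$ via $\kappa$ and invoking Theorem \ref{2 t : reps}, one obtains an irreducible unitary representation of $\h$ with $\rho(z)=e^{2\pi i\theta}I$; since $\overline{\mathtt{J}_\theta}\subseteq\ker\rho$, this descends to a non-zero irreducible $^\ast$-representation of $\mathcal{R}_\theta=C^\ast(\h)/\overline{\mathtt{J}_\theta}$. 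The Stone--von Neumann representation $\pi_{\alpha,\beta}$ shares exactly the same features: it is irreducible, and $\pi_{\alpha,\beta}(z)=e^{2\pi i\alpha\beta}I=e^{2\pi i\theta}I$, so it too factors through an irreducible $^\ast$-representation of $\mathcal{R}_\theta$ on $L^2(\R,\C)$.

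The simplicity of $\mathcal{R}_\theta$ (Theorem \ref{2t:simple}) now forces every non-zero $^\ast$-representation of $\mathcal{R}_\theta$ to be faithful, and hence isometric as a map of unital $C^\ast$-algebras. Writing $\bar f$ for the image of $f$ in $\mathcal{R}_\theta$, this yields the spectral equality
$$
\sigma_{\mathcal{B}(\mathcal{H}_\rho)}(\rho(f)) \;=\; \sigma_{\mathcal{R}_\theta}(\bar f) \;=\; \sigma_{\mathcal{B}(L^2(\R,\C))}(\pi_{\alpha,\beta}(f)).
$$
Since $\rho(f)v=0$ with $v\neq 0$, we have $0\in\sigma(\rho(f))$, whence $0\in\sigma(\pi_{\alpha,\beta}(f))$, contradicting the assumed invertibility of $\pi_{\alpha,\beta}(f)$. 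The main technical point -- and the only non-routine ingredient -- is funnelling the a priori arbitrary irreducible $^\ast$-representation of the $\ell^1$-twisted convolution algebra into an irreducible representation of the simple $C^\ast$-algebra $\mathcal{R}_\theta$; once this is in place, simplicity completes the argument through the uniform spectrum identity above.
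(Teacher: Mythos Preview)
Your approach coincides with the paper's: both use the symmetry of $(\ell^1(\Z^2,\C),\natural_\theta,{}^\ast)$ to invoke Theorem~\ref{2t:neumark}, obtain an irreducible representation annihilating a nonzero vector, and then transfer the resulting non-invertibility to $\pi_{\alpha,\beta}$ via the simplicity of $\mathcal R_\theta$.

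There is, however, one factual slip. You assert that the Stone--von Neumann representation $\pi_{\alpha,\beta}$ of $\h$ on $L^2(\R,\C)$ is irreducible. It is not: the paper emphasises at the start of Section~\ref{2section6} that these representations are ``highly reducible'', and equation~\eqref{2eq:decompofL^2} exhibits an explicit direct-integral decomposition. Fortunately this does not damage your argument. All you actually need is that $\pi_{\alpha,\beta}$ vanishes on $\mathtt J_\theta$ (immediate from $\pi_{\alpha,\beta}(z)=e^{2\pi i\theta}$) and is nonzero; simplicity of $\mathcal R_\theta$ then forces the induced map $\mathcal R_\theta\to\mathcal B(L^2(\R,\C))$ to be injective, hence isometric, and spectral permanence for unital $C^\ast$-subalgebras gives the equality $\sigma_{\mathcal R_\theta}(\bar f)=\sigma_{\mathcal B(L^2(\R,\C))}(\pi_{\alpha,\beta}(f))$ you use. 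So simply drop the word ``irreducible'' in that sentence and replace it by ``nonzero'', and the proof stands.
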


\begin{proof}
Let $\theta$ be irrational and suppose $a \in \mathcal Q _\theta \simeq (\ell^1 (\Z^2,\C),\natural_{\theta },^\ast)$ is not invertible.
 We just have to show that the non-invertibility of the element $a$ implies that $a$ is not invertible in the irrational rotation algebra $\mathcal R_\theta $ and, in particular,
not in its realisation $\pi_{\alpha,\beta}(C^\ast(\h))$ with $\alpha\beta=\theta \mod 1$. Since  $\mathcal Q_\theta $ is symmetric (cf. Lemma \ref{22lemma:qthetasymmetric}), there exists an irreducible unitary representation $\pi$ of $\h$ such that $\pi$ vanishes on $\mathtt J_\theta $ and $\pi(\Phi_\theta  (a))v =0$ for some non-zero vector $v\in \mathcal H _\pi$. This implies that $a$ is not invertible in $\mathcal R_\theta$.
\end{proof}
The proof of Lemma \ref{2lemma:convwienerlemmma1} basically says that for irrational $\theta$ the Banach algebra $\mathcal Q _\theta$ is inverse-closed in $\mathcal R_\theta$.

We will show that Lemma \ref{2lemma:convwienerlemmma1} holds  for rational $\theta$ as well. 
The representation $\pi_{\alpha,\beta}$ of $\h$ can be decomposed in the following way (cf. \cite{2B}).
Let $\nu$ be the Haar measure on $(0,\theta]$, where $\theta\in \T$ with $\theta = \alpha\beta \mod 1$. There exists a unitary operator
	\begin{equation*}
		 \textup{U} : L^2(\R) \longrightarrow \int_{(0,\theta]}^{\oplus} [\ell^2(\Z,\C)]_t \,d \nu (t)
	\end{equation*}
	and a family of representations $\{ \textup{Ind}^{\h}_N (\sigma_{\theta,s}) \,:\,s\in (0,\theta]\}$ such that
	$\pi_{\alpha,\beta}$ is unitarily equivalent via $ \textup{U}$ to the direct integral
	\begin{equation}\label{2eq:decompofL^2}
	\int_{(0,\theta]}^{\oplus} \textup{Ind}^{\h}_N (\sigma_{\theta,t}) \, d \nu (t)\,.
	\end{equation}
	Since unitary equivalence of two representations implies weak equivalence one gets that
	\begin{equation*}
	\ker (\pi_{\alpha,\beta})=\ker \left(\int_{(0,\theta]}^{\oplus} \textup{Ind}^{\h}_N (\sigma_{\theta,t}) \, d \nu (t)\right)
	= \bigcap_{t\in (0,\theta]} \ker \left(\textup{Ind}^{\h}_N (\sigma_{\theta,t}) \right)=	\mathtt J _{\theta}
	\end{equation*}
	 and hence
	that
	$\pi_{\alpha,\beta}(\ell^1(\h,\C))\simeq \ell^1(\h,\C) / \mathtt J _\theta = \mathcal Q _{\theta}$. From this observation we get the following lemma.
\begin{lemma}\label{2lemma:convwienerlemmma2}
Let $\theta\in\T$ be rational. Then $a\in \mathcal Q _\theta$ is invertible if and only if $\pi_{\alpha,\beta}(a)$ is invertible in
$\mathcal B (L^2(\R,\C))$.
\end{lemma}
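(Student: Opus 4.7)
The easy direction is immediate: any inverse $b$ of $a$ in $\mathcal Q_\theta$ satisfies $\pi_{\alpha,\beta}(a)\tilde\pi_{\alpha,\beta}(b)=I=\tilde\pi_{\alpha,\beta}(b)\pi_{\alpha,\beta}(a)$, where $\tilde\pi_{\alpha,\beta}\colon\mathcal Q_\theta\to \mathcal B(L^2(\R,\C))$ is the factorisation of $\pi_{\alpha,\beta}$ through $\mathcal Q_\theta$ (which exists because $\ker\pi_{\alpha,\beta}\cap\lh=\mathtt J_\theta$, as noted just before the lemma). So $\pi_{\alpha,\beta}(a)$ is invertible on $L^2(\R,\C)$.

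For the converse I would exploit the decomposition \eqref{2eq:decompofL^2} together with a further decomposition of each $\textup{Ind}^{\h}_N(\sigma_{\theta,t})$, valid because $\theta=p/q$ is rational. Specifically, since $\textup{Ind}^{\h}_N(\sigma_{\theta,t})(x)^q$ is scalar on each joint spectral subspace, one diagonalises the $q$-shift on $\ell^2(\Z,\C)$ via Fourier transform, writing $\ell^2(\Z,\C)\simeq \int_\T^\oplus \C^q\,ds$ and $\textup{Ind}^{\h}_N(\sigma_{\theta,t})\simeq \int_\T^\oplus\pi_\theta^{(s,t)}\,ds$ with $\pi_\theta^{(s,t)}$ as in \eqref{2eq:pi_theta_s2}--\eqref{2eq:pi_theta_s22}. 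Combining this with \eqref{2eq:decompofL^2} yields $\pi_{\alpha,\beta}\simeq \int^\oplus_{\T\times(0,\theta]}\pi_\theta^{(s,t)}\,ds\,d\nu(t)$.

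Assuming $\pi_{\alpha,\beta}(a)$ is invertible, the fibre operators $\pi_\theta^{(s,t)}(a)\in M_q(\C)$ must be invertible for $(s,t)$ in a set of full measure with essentially bounded inverse norms. Lifting $a$ to $\tilde a\in\lh$, the entries of $\pi_\theta^{(s,t)}(\tilde a)$ are absolutely convergent trigonometric series in $(s,t)$, so $(s,t)\mapsto\pi_\theta^{(s,t)}(a)$ is continuous into $M_q(\C)$. By continuity of the determinant and compactness of $\T\times[0,\theta]$, invertibility on a dense set propagates to every $(s,t)$, and the inverse map $(s,t)\mapsto\pi_\theta^{(s,t)}(a)^{-1}$ is automatically uniformly bounded in norm.

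Finally, $\mathcal Q_\theta$ is symmetric by Lemma~\ref{22lemma:qthetasymmetric}, so Theorem~\ref{2t:neumark} applies: $a$ is invertible in $\mathcal Q_\theta$ if and only if $\pi(a)$ is invertible for every irreducible $^\ast$-representation $\pi$ of $\mathcal Q_\theta$. These correspond bijectively to the irreducible unitary representations of $\h$ whose kernel contains $\mathtt J_\theta$, and for $\theta=p/q$ the complete list (cf.\ \cite{2BEEK}) is $\{\pi_\theta^{(s,t)}:(s,t)\in\T^2\}$. The pointwise invertibility established above therefore delivers invertibility of $a$ in $\mathcal Q_\theta$. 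The main technical hurdle, I expect, is to make the further direct integral decomposition of $\textup{Ind}^{\h}_N(\sigma_{\theta,t})$ and the ``a.e.\ to everywhere'' upgrade fully rigorous; both rely crucially on the finite-dimensionality $q$ of the fibre representations in the rational case.
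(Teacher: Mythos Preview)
Your argument is correct in outline but takes a considerably longer route than the paper. The paper's proof is essentially one line: from the decomposition \eqref{2eq:decompofL^2} one has $\ker(\pi_{\alpha,\beta})=\bigcap_{t\in(0,\theta]}\ker\bigl(\textup{Ind}^{\h}_N(\sigma_{\theta,t})\bigr)=\mathtt J_\theta$, so $\pi_{\alpha,\beta}$ factors through $\mathcal Q_\theta$ \emph{faithfully}, i.e., $\pi_{\alpha,\beta}(\ell^1(\h,\C))\simeq\mathcal Q_\theta$. Invertibility of $a$ in $\mathcal Q_\theta$ is then equivalent to invertibility of $\pi_{\alpha,\beta}(a)$ in $\mathcal B(L^2(\R,\C))$ by the symmetry of $\mathcal Q_\theta$ (Lemma~\ref{22lemma:qthetasymmetric}) together with inverse-closedness of $C^\ast$-subalgebras. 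No further decomposition, no continuity argument, and no appeal to the classification in \cite{2BEEK} are needed.

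Your approach instead pushes the direct integral one level further, splitting each $\textup{Ind}^{\h}_N(\sigma_{\theta,t})$ into the finite-dimensional fibres $\pi_\theta^{(s,t)}$, and then invokes Theorem~\ref{2t:neumark} plus the full list of irreducibles. This works and has the virtue of being completely explicit --- it effectively re-proves the rational half of Theorem~\ref{2t:main} along the way --- but the faithfulness observation makes all of that machinery unnecessary here.

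One point to tighten: the step ``by continuity of the determinant and compactness \dots\ invertibility on a dense set propagates to every $(s,t)$'' is not valid as stated (a continuous determinant can vanish on a measure-zero set). What actually does the work is the \emph{essential bound} on $\|\pi_\theta^{(s,t)}(a)^{-1}\|$ that you already extracted from the direct integral: this bounds the smallest singular value of $\pi_\theta^{(s,t)}(a)$ away from zero almost everywhere, and continuity of that singular value then forces the bound to hold everywhere. You have the right ingredients; just combine them in this order.
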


\begin{proof}[Proof of Theorem \ref{2t:wt}]
Combine Lemma \ref{2lemma:convwienerlemmma1} and Lemma \ref{2lemma:convwienerlemmma2}.
\end{proof}

\begin{remark}
The decomposition \eqref{2eq:decompofL^2} of $\pi_{\alpha,\beta}$ depends only on the product $\alpha\beta=\theta \mod 1$ and is thus independent of the particular choice of $\alpha$ and $\beta$. Hence, in Theorem \ref{2t:wt} and Theorem \ref{2t:wienerfromconv} one has to consider, e.g., $\alpha=\theta$ and $\beta=1$ only.
\end{remark}

\subsection{An application to algebraic dynamical systems}
As already mentioned in the first section of this article the problem of deciding on the invertibility in $\ell^1(\h,\C)$ has an application in algebraic dynamics.
The following result is important to check invertibility for $f \in \Z[\h]$ in the group algebra $\ell^1(\h,\C)$ because it tells us that
$\pi_{\alpha,\beta}(f)$ has a trivial kernel in $L^2(\R,\C)$ for $\alpha,\beta \not = 0$.
\begin{theorem}[\cite{2L}]
\label{2t:injectivity}
Let $G$ be a non-zero element in $ L^2(\R,\C)$, then for every finite set $A \subseteq \Z^2$ the set $\{\textup{T}_\alpha^k\textup{M}_\beta^l G \,:\, (k,l)\in A\}$
is linear independent over $\C$.
\end{theorem}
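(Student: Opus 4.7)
\medskip

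\noindent\textbf{Proof plan.} A putative linear dependence $\sum_{(k,l)\in A}c_{k,l}\,\textup{T}_\alpha^k\textup{M}_\beta^l G=0$ is equivalent to the statement that the operator
\[
\pi_{\alpha,\beta}(f)=\sum_{(k,l)\in A}c_{k,l}\,\textup{T}_\alpha^k\textup{M}_\beta^l
\]
has $G$ in its kernel, where $f\in \ell^1(\Z^2,\C)$ is the nonzero finitely supported sequence $(c_{k,l})$ regarded as an element of the twisted convolution algebra $(\ell^1(\Z^2,\C),\natural_\theta,{}^\ast)$ with $\theta=\alpha\beta\bmod 1$. Thus the theorem reduces to showing: for every nonzero finitely supported $f$ on $\Z^2$, the operator $\pi_{\alpha,\beta}(f)\in\mathcal B(L^2(\R,\C))$ is injective. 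I would split the argument into the rational and irrational cases for $\theta$, since the $C^\ast$-algebra $\pi_{\alpha,\beta}(C^\ast(\h))\simeq \mathcal Q_\theta$ behaves very differently in the two regimes, as already exploited in Sections \ref{2section4} and \ref{2section6}.

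\medskip

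\noindent\textbf{The rational case.} When $\theta=p/q$ with $\gcd(p,q)=1$, I would apply a Zak-type transform that simultaneously diagonalises $\textup{T}_\alpha^q$ and $\textup{M}_\beta^q$, realising $L^2(\R,\C)$ unitarily as $L^2(Y,\C^q)$ for a suitable fundamental domain $Y$ of a lattice in $\R^2$. Under this unitary equivalence, $\pi_{\alpha,\beta}(f)$ becomes a pointwise matrix multiplication operator with symbol $F(s,t)\in M_q(\C)$ whose entries are trigonometric polynomials in $(s,t)$ with coefficients built from $(c_{k,l})$. Since $f\neq 0$, $\det F(s,t)$ is a nonzero real-analytic function on $Y$, so its zero set has measure zero; hence $F(s,t)$ is invertible almost everywhere on $Y$, and multiplication by $F$ is injective on $L^2(Y,\C^q)$. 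This yields the result in the rational case.

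\medskip

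\noindent\textbf{The irrational case.} Here I would use the direct integral decomposition \eqref{2eq:decompofL^2}, which writes $\pi_{\alpha,\beta}\simeq\int_{(0,\theta]}^{\oplus}\textup{Ind}^{\h}_N(\sigma_{\theta,t})\,d\nu(t)$ on $\int_{(0,\theta]}^{\oplus}[\ell^2(\Z,\C)]_t\,d\nu(t)$. In this picture $G$ corresponds to a measurable field $(G_t)_{t\in(0,\theta]}$ of vectors in $\ell^2(\Z,\C)$, and $\pi_{\alpha,\beta}(f)G=0$ translates to $\textup{Ind}^{\h}_N(\sigma_{\theta,t})(f)G_t=0$ for $\nu$-a.e.\ $t$. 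The plan is to show fiberwise injectivity: for every $t$ and every nonzero finitely supported $f$ on $\Z^2$, the operator $\textup{Ind}^{\h}_N(\sigma_{\theta,t})(f)$ is injective on $\ell^2(\Z,\C)$. From the explicit formula \eqref{2e:inducedstonevonneumann}, a putative kernel element $H\in\ell^2(\Z,\C)$ would satisfy a banded linear recurrence whose characteristic symbol is a nonzero Laurent polynomial in $e^{2\pi i\theta n}$; using irrationality of $\theta$ and the equidistribution of $\{n\theta\}$, I would argue that any $\ell^2$-solution must be identically zero.

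\medskip

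\noindent\textbf{Main obstacle.} The delicate step is the fiberwise injectivity assertion in the irrational case: one must rule out the existence of any $\ell^2$-kernel vector for the band operator $\textup{Ind}^{\h}_N(\sigma_{\theta,t})(f)$ for \emph{every} $t$, not just for $\nu$-a.e.\ $t$. One natural approach is to combine (i) the faithful normal trace $\tau$ on the noncommutative torus $\pi_{\alpha,\beta}(C^\ast(\h))''$, which is a factor (of type II$_1$) for irrational $\theta$, with (ii) an Atiyah-type property saying that nonzero elements of the finitely supported subalgebra act without kernel on the associated standard form; this is the route taken in Linnell's original argument and bypasses the case analysis over $t$ altogether. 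Once fiberwise injectivity is in hand, the direct integral $\pi_{\alpha,\beta}(f)$ is injective and the theorem follows by combining both cases.
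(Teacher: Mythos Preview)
The paper does not give its own proof of this theorem: it is quoted verbatim from Linnell \cite{2L} and used as a black box to derive Theorem \ref{2stonevneumannspectrum}. So there is no in-paper argument to compare your proposal against; any assessment has to be against Linnell's original proof and against internal consistency.

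Your reduction to injectivity of $\pi_{\alpha,\beta}(f)$ on $L^2(\R,\C)$ for nonzero finitely supported $f$ is correct and is exactly how the result is used in the paper. In the rational case, the Zak-transform reduction to a matrix-valued multiplication operator is standard, but the sentence ``Since $f\neq 0$, $\det F(s,t)$ is a nonzero real-analytic function'' hides a genuine step: $F\not\equiv 0$ is immediate, but $\det F\not\equiv 0$ is not. What you need here is that the twisted Laurent polynomial ring $\C_\theta[\Z^2]$ is a domain (true for all $\theta$, e.g.\ by an orderability/leading-term argument), together with the fact that for rational $\theta$ this ring is free of finite rank over its centre, so that an element has vanishing reduced norm iff it is a zero divisor. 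Without this, the rational case is incomplete.

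The irrational case is where your proposal has a real gap. Your direct-integral plan reduces to fibrewise injectivity of the band operators $\textup{Ind}^{\h}_N(\sigma_{\theta,t})(f)$ on $\ell^2(\Z,\C)$, and you correctly flag this as the ``main obstacle''. The suggested fix---equidistribution of $\{n\theta\}$ forcing $\ell^2$-solutions of the recurrence to vanish---is not an argument; almost-periodic band operators can and do have $\ell^2$ kernel in general, and nothing in your sketch rules this out. You then propose to bypass the difficulty via the faithful trace on the type~II$_1$ factor and an ``Atiyah-type property'', noting that this is Linnell's route. That is accurate, but it is a citation, not a proof: the substance of \cite{2L} is precisely that nonzero elements of the relevant twisted group ring act without kernel, and this uses nontrivial input (analytic zero-divisor results for torsion-free groups) that you have not supplied. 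As written, the irrational case of your proposal is circular.
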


The following result is a reformulation of Theorem \ref{2t:injectivity} and gives an exact description of the spectrum of an operator $\pi_{\alpha,\beta}(f)$,
for $\alpha,\beta\in\R\smallsetminus\{0\}$ and $f\in \C[\h]$, where $\C[\h]$ is the ring of functions $\h \longrightarrow \C$ with finite support.
\begin{theorem}
\label{2stonevneumannspectrum}
Let $f \in \C[\h]$ with $f^\theta \not = 0$ for  $\theta=\alpha\beta \not = 0$, $\alpha,\beta \in \R$, then for all $c\in\sigma(\pi_{\alpha,\beta}(f))$
the operators $ \pi_{\alpha,\beta}(c -f)$ are injective and have dense range in $L^2(\R,\C)$ but are not bounded from below.
\end{theorem}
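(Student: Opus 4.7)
\medskip
\noindent\textbf{Proof plan.} The strategy is to deduce all three assertions from Theorem~\ref{2t:injectivity} (Linnell's linear independence result), applied to the element $g\coloneqq c-f\in \C[\h]$. Note that since $\pi_{\alpha,\beta}(z)=e^{2\pi i\theta}$ acts as a scalar, the operator $\pi_{\alpha,\beta}(h)$ depends only on $h^\theta\in \ell^1(\Z^2,\C)$ for any $h\in\ell^1(\h,\C)$, so it suffices to track the twisted projection $g^\theta=c\,\delta_{(0,0)}-f^\theta$. The only way $g^\theta$ could vanish is if $f^\theta = c\,\delta_{(0,0)}$, in which case $\pi_{\alpha,\beta}(f)=cI$ and the assumption $c\in\sigma(\pi_{\alpha,\beta}(f))$ is compatible but the statement becomes vacuous; we therefore work in the nondegenerate case $g^\theta\neq 0$.

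\medskip
\noindent\textit{Step 1 (injectivity).} Suppose $\pi_{\alpha,\beta}(g)G=0$ for some $G\in L^2(\R,\C)$. Writing $g^\theta=\sum_{(k,l)}g^\theta_{k,l}\delta_{(k,l)}$ with finite support, this unfolds to a finite linear relation
\begin{equation*}
\sum_{(k,l)} g^\theta_{k,l}\,\textup{T}_\alpha^{k}\textup{M}_\beta^{l}G = 0.
\end{equation*}
If $G\neq 0$, Theorem~\ref{2t:injectivity} forces every $g^\theta_{k,l}$ to vanish, contradicting $g^\theta\neq 0$. Hence $G=0$, and $\pi_{\alpha,\beta}(g)$ is injective.

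\medskip
\noindent\textit{Step 2 (dense range).} Because $\pi_{\alpha,\beta}$ is a $\ast$-representation, $\pi_{\alpha,\beta}(g)^{\ast}=\pi_{\alpha,\beta}(g^{\ast})$ where $g^{\ast}=\bar c-f^{\ast}\in \C[\h]$. The involution preserves nonvanishing of the $\theta$-projection, so $(g^{\ast})^{\theta}\neq 0$, and Step~1 applied to $g^{\ast}$ shows $\pi_{\alpha,\beta}(g)^{\ast}$ is injective. By the standard identity $(\ker T^{\ast})^{\perp}=\overline{\operatorname{im} T}$ (cf. footnote~\ref{2footnote:adjoint}), the range of $\pi_{\alpha,\beta}(g)$ is dense in $L^2(\R,\C)$.

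\medskip
\noindent\textit{Step 3 (not bounded from below).} If $\pi_{\alpha,\beta}(g)$ were bounded below, its range would be closed, and combined with Step~2 it would be surjective; together with Step~1 the open mapping theorem would then yield a bounded inverse. This contradicts the hypothesis $c\in\sigma(\pi_{\alpha,\beta}(f))$, which says exactly that $\pi_{\alpha,\beta}(g)=cI-\pi_{\alpha,\beta}(f)$ is not invertible in $\mathcal{B}(L^2(\R,\C))$. Hence $\pi_{\alpha,\beta}(g)$ is not bounded from below, completing the proof.

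\medskip
\noindent The whole argument is essentially a packaging of Linnell's theorem: the only nontrivial input is that $\pi_{\alpha,\beta}(h)$ is injective on $L^2(\R,\C)$ whenever $h^\theta\neq 0$. Consequently the sole obstruction to invertibility at a point of the spectrum must be failure of boundedness from below, which is the substance of the statement. No step requires more than a direct appeal to the cited results.
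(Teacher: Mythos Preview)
Your proof is correct and follows essentially the same route as the paper: injectivity from Linnell's theorem applied to $g=c-f$, dense range via injectivity of the adjoint $\pi_{\alpha,\beta}(g^\ast)$ together with $(\ker T^\ast)^\perp=\overline{\operatorname{im}T}$, and failure of boundedness from below as the only remaining obstruction to invertibility. One small correction to your side remark: in the degenerate case $f^\theta=c\,\delta_{(0,0)}$ the conclusion is not vacuous but actually \emph{false} (then $\pi_{\alpha,\beta}(c-f)=0$, which is neither injective nor has dense range), so this edge case must genuinely be excluded---the paper's proof tacitly assumes $(c-f)^\theta\neq 0$ as well.
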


\begin{proof}
Suppose $f\in \C[\h]$ is such that $\pi_{\alpha,\beta}(f) \not = 0$ and $c\in\sigma(\pi_{\alpha,\beta}(f))$. By Theorem \ref{2t:injectivity}, for every non-zero $G\in L^2(\R,\C)$
the finite linear combination
\begin{equation*}
	(c - \pi_{\alpha,\beta}(f))G =	\left( c - \sum_{(k,l,m)\in\Z^3} f_{(k,l,m)}\textup{T}_{\alpha}^k \textup{M}_\beta^l e^{2 \pi i \theta m} \right )G \not = 0 \,.
\end{equation*}
This is equivalent to the injectivity of $c - (\pi_{\alpha,\beta}(f))$.

Suppose that the range of  $c - \pi_{\alpha,\beta}(f)$ is not dense in $L^2(\R,\C)$. Then
\begin{equation*}
(\pi_{\alpha,\beta}(c-f))^\ast=\pi_{\alpha,\beta}((c-f)^\ast)
\end{equation*}
is not injective (cf. the footnote on page \pageref{2footnote:adjoint}) which is a contradiction to Theorem \ref{2t:injectivity} because $(c -f)^\ast \in \C[\h]$.
Hence,  $c - \pi_{\alpha,\beta}(f)$ not being invertible on $L^2(\R,\C)$ is equivalent to $c - \pi_{\alpha,\beta}(f)$ not being  bounded from below.
\end{proof}

Therefore, non-expansiveness of $\alpha_f$ can be checked via two different approaches:
\begin{itemize}
	\item{The dual of $\h$:} there is an irreducible representation $\pi$ of $\h$ such that $0$ is an eigenvalue of $\pi(f)$.
	\item{Stone-von Neumann representations}: For all Stone-von Neumann representations $\pi_{\alpha,\beta}$, $0$ is an eigenvalue of $\pi_{\alpha,\beta}(f)$ if and only if $\pi_{\alpha,\beta}(f)=0$; and
	$\pi_{\alpha,\beta}(f)$ is not invertible if and only if $\pi_{\alpha,\beta}(f)$ is not bounded from below.
\end{itemize}
\begin{remark}
The authors are not aware whether the approach based on Theorem \ref{2t:neumark} and the construction of the dual of $\h$ via ergodic quasi-invariant measures are well-known results in the field of Time-Frequency Analysis. It would be interesting to investigate whether this eigenvalue approach would simplify the problem of deciding on invertibility -- at least -- for some examples  $f \in \ell^1(\h,\C) \smallsetminus  \C[\h]$.
\end{remark}

\section{Examples} \label{2section7}
We now demonstrate how to apply Wiener's Lemma to obtain easily verifiable sufficient conditions for non-expansivity of a principal algebraic action.

Let $f\in \Z[\h]$ be of the form
\begin{equation}\label{2specialform}
f=g_1(y,z)x-g_0(y,z)
\end{equation}
with
$g_1(y,z),g_0(y,z) \in \Z[y,z]\simeq \Z[\Z^2]$.

We set
\begin{equation*}
\mathsf{U}(g_i) = \{(\zeta,\chi)\in\s^2\,:\, g_i(\zeta,\chi)=0\}, \, i=0,1\,.
\end{equation*}

 Moreover, for a polynomial $h\in\C[\Z^d]$ define the
 \emph{logarithmic Mahler measure} $\mathfrak m(h)$
 by the integral
 \begin{equation*}
   \mathfrak m(h) =\int_{\T^d} \log |h(e^{2\pi i\theta_1},
   \ldots, e^{2\pi i\theta_d})| \,
   d\theta_1\cdots d\theta_d\,.
 \end{equation*}

In \cite{2LS2} (cf. \cite[Theorem 2.6]{2GSV} for a proof) the following result was established: for linear $f\in \Z[\h]$ of the form \eqref{2specialform} with $\mathsf{U}(g_i)=\varnothing$ for $i=0,1$, the action
 $\alpha_f$ is expansive  if and only
 if
 \begin{equation*}
 \mathfrak m(g_0)\ne \mathfrak m(g_1)\,.
\end{equation*}

  In this section we use results
 on invertibility to derive criteria for non-expansiveness
 of principal actions of elements $f$ in $\Z[\h]$ of the form
 $f=g_1(y,z)x-g_0(y,z)$  in cases when the unitary
 varieties  $\mathsf{U}(g_0)$ and
$\mathsf{U}(g_1)$ are not necessarily empty.

For every $\chi \in\s$, consider the rational function $\psi_\chi$ on $\s$:
\begin{equation*}
\psi_\chi(\zeta)=\frac{g_0(\zeta,\chi)}{g_1(\zeta\chi^{-1},\chi)}\,
\end{equation*}
and consider the map $\psi \colon \N \times \s \longrightarrow \C$ given by
\begin{equation*}
\psi_{\chi}(n,\zeta)=
\begin{cases} 1 &\mbox{if } n = 0 \\
 \prod_{j=0}^{n-1}\psi_\chi(\zeta\chi^{-j}) & \mbox{if } n\geq 1.
 \end{cases}
\end{equation*}

\subsection{Either $\mathsf{U}(g_0)$ or $\mathsf{U}(g_1)$ is a non-empty set}

We fix the following notation. For every $\chi\in\s$ and $i=0,1$, put
\begin{equation*}
\mathsf{U}_\chi (g_i)=\{\zeta\in \s\,:\,g_i(\zeta,\chi)=0\}\,,
\end{equation*}
and
\begin{equation*}
g_{i,\chi}(y) =g_i(y,\chi)\,,
\end{equation*}
which we will view as a Laurent polynomial in $y$
with complex coefficients, i.e., $g_{i,\chi}\in \C[\Z]$
for every $\chi$ and $i=0,1$. Note also that
the set $\mathsf{U}_\chi (g_i)$ is
infinite   if and only if  $g_{i,\chi}$
is the zero polynomial.

For notational convenience we put
\begin{equation*}
\phi_\chi(\zeta)=\log |\psi_\chi(\zeta) | \quad \text{and}\quad \phi_\chi(n,\zeta)=\log |\psi_\chi (n,\zeta) |\,,
\end{equation*}
for every $\zeta \in \s$ and $n\geq0$.

\begin{theorem}\label{2t:oneemptyonenot}
Let $f\in\Z[\h]$ be of the form $f=g_1(y,z)x-g_0(y,z)$. Suppose there exists an element $\chi\in\s$ of infinite order which satisfies either of the following conditions.
\begin{enumerate}[label=(\roman{*})]
\item \label{2cond:1}
$\mathsf{U}_\chi(g_0)=\varnothing$, $\mathsf{U}_\chi(g_1)\not=\varnothing$ and $\int \phi_\chi d\lambda_{\s} <0$.
\item \label{2cond:12}
$\mathsf{U}_\chi(g_0)\not=\varnothing$, $\mathsf{U}_\chi(g_1)=\varnothing$ and $\int \phi_\chi d\lambda_{\s} >0$.
\end{enumerate}
Then $\alpha_f$ is non-expansive.
\end{theorem}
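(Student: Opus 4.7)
My plan is to apply Wiener's Lemma for the discrete Heisenberg group (Theorem~\ref{2t:main}): it suffices to exhibit $\theta,s,t\in\T$ for which $\pi_\theta^{(s,t)}(f)$ is not invertible. I take $\theta\in\T$ irrational with $e^{2\pi i\theta} = \chi$ (available because $\chi$ is of infinite order) and $s=t=0$, working on $L^2(\T,\lambda_\T)$. With $\pi_\theta(x)F(\zeta) = F(\chi\zeta)$, $\pi_\theta(y)F(\zeta) = \zeta F(\zeta)$ and $\pi_\theta(z) = \chi$, a direct computation gives
\begin{equation*}
\pi_\theta(f) F(\zeta) = g_1(\zeta,\chi)\,F(\chi\zeta) - g_0(\zeta,\chi)\,F(\zeta),
\end{equation*}
so $\pi_\theta(f) F = 0$ is the multiplicative cocycle $F(\chi\zeta) = (g_0/g_1)(\zeta,\chi)\,F(\zeta)$, whose Lyapunov exponent under $\textup R_\theta$ equals $\mathfrak m(g_{0,\chi}) - \mathfrak m(g_{1,\chi}) = \int_\s\phi_\chi\,d\lambda_\s$. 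The strategy is to construct approximate null vectors for $\pi_\theta(f)$, thereby showing it is not bounded below.

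Under hypothesis~\ref{2cond:1}, pick $\zeta_0\in\mathsf U_\chi(g_1)$, so $g_1(\zeta_0,\chi) = 0 \ne g_0(\zeta_0,\chi)$. At $\zeta_0$ the cocycle forces any exact solution to satisfy $F(\zeta_0) = 0$, but leaves $F(\chi\zeta_0)$ free; setting $a_1 = 1$, the forward-orbit values
\begin{equation*}
a_n = \prod_{j=1}^{n-1}\frac{g_0(\chi^j\zeta_0,\chi)}{g_1(\chi^j\zeta_0,\chi)}\qquad(n\ge 1)
\end{equation*}
are well defined, because the irrational orbit never revisits $\zeta_0$. Unique ergodicity of $\textup R_\theta$, applied to the $L^1$ function $\log|g_0/g_1|(\cdot,\chi)$ (integrable since it has only logarithmic singularities at the finite set $\mathsf U_\chi(g_1)$), gives $\tfrac1n\log|a_n|\to\int\phi_\chi\,d\lambda_\s<0$, so $(a_n)$ decays exponentially and lies in $\ell^2(\N)$. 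For $N\in\N$ and $\epsilon>0$ small enough that the $\epsilon$-arcs $B_n\subset\s$ centred at $\chi^n\zeta_0$, $n=0,\dots,N+1$, are pairwise disjoint with $\chi B_n = B_{n+1}$, set $F_{\epsilon,N} = \sum_{n=1}^N a_n\mathbf{1}_{B_n}$. A first-order Taylor expansion of $g_0,g_1$ on each $B_n$ combined with the cocycle recursion produces a pointwise residual of $O(\epsilon(|a_n|+|a_{n+1}|))$ for $1\le n\le N-1$; on $B_0$ the residual $g_1(\zeta,\chi)a_1$ is $O(\epsilon)$ because $g_1(\zeta_0,\chi)=0$; on $B_N$ the cut-off residual $-g_0(\zeta,\chi)a_N$ is of bounded amplitude $O(|a_N|)$. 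Integration yields
\begin{equation*}
\|\pi_\theta(f)F_{\epsilon,N}\|_2^2 \;\lesssim\; \epsilon^3\!\!\!\sum_{0\le n\le N}(|a_n|+|a_{n+1}|)^2 + \epsilon|a_N|^2,\qquad \|F_{\epsilon,N}\|_2^2 \;\asymp\; \epsilon\sum_{n\ge 1}|a_n|^2.
\end{equation*}
Choosing first $N$ so large that $|a_N|^2/\sum|a_n|^2$ is arbitrarily small (possible by exponential decay), and then $\epsilon$ small enough to kill the $\epsilon^2$ contribution, the ratio tends to $0$, so $\pi_\theta(f)$ is not bounded below.

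Hypothesis~\ref{2cond:12} is handled symmetrically: a zero $\zeta_0\in\mathsf U_\chi(g_0)$ forces $F(\chi\zeta_0)=0$ and (by propagation) $F\equiv 0$ on the forward orbit, while the backward-orbit cocycle $\prod_{j=1}^n (g_1/g_0)(\chi^{-j}\zeta_0,\chi)$ has Lyapunov exponent $-\int\phi_\chi\,d\lambda_\s<0$; the same construction applied to balls around the backward orbit produces the required null sequence. The main obstacle in executing this plan rigorously is that Birkhoff's theorem gives a.e.\ convergence, whereas the distinguished starting point $\zeta_0$ lies precisely on the singular locus of $\log|g_0/g_1|(\cdot,\chi)$. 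I would circumvent this by decomposing $\log|g_0/g_1|$ into a continuous function (to which Weyl equidistribution applies at \emph{every} orbit) and a residual supported in small neighbourhoods of the zeros of $g_1(\cdot,\chi)$, then controlling the contribution of the residual to the Birkhoff sums via the Diophantine rate at which $(\chi^n\zeta_0)_{n\ge 1}$ approaches this finite singular set. Once the uniform exponential decay of $|a_n|$ is secured, the routine bookkeeping above closes the argument.
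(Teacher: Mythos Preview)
Your approach is genuinely different from the paper's, and the gap you flag at the end is real and, as stated, not repairable.

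You try to show directly that $\pi_\theta(f)$ is not bounded below by building approximate null vectors concentrated on the forward orbit of a zero $\zeta_0\in\mathsf U_\chi(g_1)$. The decay of your coefficients $a_n$ hinges on
\[
\frac{1}{n}\sum_{j=1}^{n-1}\log\Bigl|\frac{g_0(\chi^j\zeta_0,\chi)}{g_1(\chi^j\zeta_0,\chi)}\Bigr|\;\longrightarrow\;\int_{\s}\phi_\chi\,d\lambda_{\s}<0.
\]
Your appeal to ``unique ergodicity of $\textup R_\theta$ applied to the $L^1$ function'' is not legitimate: unique ergodicity gives uniform convergence of Birkhoff averages only for \emph{continuous} observables, while $\log|g_1(\cdot,\chi)|$ has logarithmic singularities. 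Birkhoff's theorem gives convergence only almost everywhere, and your starting point $\zeta_0$ is pinned to the singular set. The proposed Diophantine fix does not work in general: if $\theta=\arg\chi/(2\pi)$ is Liouville, with $|q_k\theta-p_k|<q_k^{-q_k}$, then $|g_1(\chi^{q_k}\zeta_0,\chi)|\lesssim q_k^{-q_k}$ for a simple zero at $\zeta_0$, so the single term $-\log|g_1(\chi^{q_k}\zeta_0,\chi)|\gtrsim q_k\log q_k$ dominates the Birkhoff sum and forces $\log|a_{q_k+1}|\to+\infty$. Thus $(a_n)$ blows up along a subsequence and your construction collapses. Since $\chi$ is given by the hypothesis and may well be Liouville, this cannot be circumvented by a better choice of parameter.

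The paper avoids this difficulty by arguing by contradiction with the \emph{inverse} rather than constructing null vectors. Assuming $w=f^{-1}\in\ell^1(\h,\C)$ exists, it evaluates $\pi_{1,\theta}(w)=\sum_{l\in\Z}P_{l,\theta}(t)\textup T_1^l$ in the Stone--von~Neumann representation on $L^2(\R,\C)$. The point is that the $P_{l,\theta}$ are \emph{continuous} functions of $t$ with $\sum_l\sup_t|P_{l,\theta}(t)|\le\|w\|_1<\infty$, and the identity $wf=1$ yields the same cocycle recursion you use, but now parametrised by $t\in\R$. Birkhoff's theorem, applied on a set of full measure, shows the multiplicative cocycle diverges, forcing $P_{0,\theta}(t)=0$ for almost every $t$; continuity then upgrades this to $P_{0,\theta}\equiv 0$. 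The remaining equation $P_{-1,\theta}(t)\,g_{1,\chi}(e^{2\pi i\theta(t-1)})=1$ is then impossible at any $t$ mapping to a zero of $g_{1,\chi}$. The crucial trick you are missing is this passage from ``almost every starting point'' to ``every starting point'' via continuity of the coefficients of an assumed $\ell^1$-inverse; this is exactly what lets the paper use Birkhoff without ever having to control the ergodic average at a prescribed singular point.
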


\begin{proof}
We will prove only the first case,
the second case can be proved similarly.

Suppose $f$ is such that $(X_f,\alpha_f)$ is expansive
and the conditions in \ref{2cond:1} are satisfied. We will now show that
certain consequences of expansivity of $\alpha_f$
are inconsistent with the conditions in
\ref{2cond:1}. Hence, by arriving to a contradiction,
we will prove that under \ref{2cond:1} $\alpha_f$ is not expansive.

We know that $(X_f,\alpha_f)$ is expansive if and only if $f$ is invertible in $\lh$.
Hence $(X_f,\alpha_f)$ is expansive if and only if there exists a $w\in\lh$,
\begin{equation*}
 w = \sum_{k,l,m} w_{k,l,m} y^l x^k  z^m\,,
\end{equation*}

such that
\begin{equation*}
f\cdot w = w\cdot f = 1_{\lh}\,.
\end{equation*}

Suppose  $\theta\in (0,1]$ is irrational and that
$\chi=e^{2\pi i\theta}\in\mathbb S$ satisfies condition
\ref{2cond:1}.
Consider the following representation $\pi_{1,\theta}$
of $\ell^1(\h,\C)$ on $L^2(\mathbb R,\C)$, defined by
\begin{align*}
(\pi_{1,\theta}(x) F)(t)= \textup{T}_1F(t) = F(t+1)&,
\quad (\pi_{1,\theta}(y) F)(t)  = \textup{M}_\theta F(t)=
e^{2\pi i\theta t} F(t),
\\\text{and}\quad &\ (\pi_{1,\theta}(z) F)(t) = e^{2\pi i\theta} F(t)\,.
\end{align*}

If
\begin{equation*}
f= g_1(y,z)x-g_0(y,z) \quad \text{and} \quad w=f^{-1} =
\sum_{k,l,m} w_{k,l,m} y^k x^l  z^m\,,
\end{equation*}
then
\begin{equation*}
\pi_{1,\theta}(f) =  g_1(e^{2\pi i\theta t},e^{2\pi i\theta})\textup{T}_1
-g_0(e^{2\pi i\theta t},e^{2\pi i\theta})
\end{equation*}
and
\begin{equation*}\aligned
\pi_{1,\theta}(w) &= \sum_{(k,l,m)\in\Z^3}
w_{k,l,m} \textup{M}^k_\theta \textup{T}_1^l  {\chi}^m
\\&=
\sum_{l\in \Z}
\Bigl[ \sum_{(k,m)\in\Z^2}
w_{(k,l,m)}e^{2\pi i\theta t k} e^{2\pi i\theta m}
\Bigr] \textup{T}_1^l\,.
\endaligned
\end{equation*}

Set
\begin{equation*} P_{l,\theta}(t)\coloneqq \sum_{(k,m)\in\Z^2}
w_{(k,l,m)}e^{2\pi i\theta t k} e^{2\pi i\theta m}\,,
\end{equation*} then
$\pi_{1,\theta}(w)=\sum_{l\in \Z} P_{l,\theta}(t) \textup{T}_1^l$.

The functions $P_{l,\theta}(\cdot)\colon \R\longrightarrow \C$,
$l\in\Z$, are bounded and continuous.  Indeed,
for any $l\in \Z$
\begin{equation*}
P_{l,\theta}(t)=\sum_{(k,m)\in\Z^2}
w_{(k,l,m)}e^{2\pi i\theta t k} e^{2\pi i\theta m}
\end{equation*}
is a Fourier series with absolutely convergent
coefficients:
\begin{align*}
\sum_{k\in \Z} \Bigl|\sum_{m\in\Z}
w_{(k,l,m)}  e^{2\pi i\theta m}\Bigr|
&\le
 \sum_{k\in \Z}  \sum_{m\in\Z} |
w_{(k,l,m)}  | \\&\le \|w\|_{\ell^1(\h,\C)}<\infty\,.
\end{align*}
For similar reasons,
\begin{align}\label{2eq:norm1}
 \sum_{l\in \Z} \sup_{t\in\R}| P_{l,\theta}(t)| &\le
 \sum_{l\in \Z} \Bigl[ \sum_{k,m} |w_{k,l,m}|\Bigr]
 \\ &=\|w\|_{\ell^1(\h,\C)}\label{2eq:norm2}
 <\infty\,.
 \end{align}

 Since $w\cdot f = 1_{\lh}$ and $\pi_{1,\theta}(1_{\lh}) = 1_{\mathcal B (L^2(\R,\C))}$ -- the identity operator on $L^2(\R,\C)$, one has
 \begin{equation*}\aligned
 1_{\mathcal B (L^2(\R,\C))} &= \pi_{1,\theta}(w)\pi_{1,\theta}(f)\\
 &= \Bigl[ \sum_{l\in\Z} P_{l,\theta}(t) \textup{T}_1^l\Bigr]
 \cdot\Bigl[ g_1(e^{2\pi i \theta t},e^{2\pi i \theta})\textup{T}_1
 - g_0(e^{2\pi i \theta t},e^{2\pi i \theta})\Bigr] \\
 &=\Bigl[ \sum_{l\in\Z} P_{l,\theta}(t) \textup{T}_1^l\Bigr]
 \cdot\Bigl[ g_{1,\chi}(e^{2\pi i \theta t})\textup{T}_1
 - g_{0,\chi}(e^{2\pi i \theta t})\Bigr] \\
 &= \sum_{l\in\Z} \Bigl[  P_{l-1,\theta}(t)
 g_{1,\chi}(e^{2\pi i\theta(t+l-1)})-
  P_{l,\theta}(t)
 g_{0,\chi}(e^{2\pi i\theta(t+l)})
 \Bigr] \textup{T}_1^l \,.
 \endaligned
 \end{equation*}

 Set
\begin{equation}
 Q_{l,\theta}(t)=P_{l-1,\theta}(t)
 g_{1,\chi}(e^{2\pi i\theta(t+l-1)})-
  P_{l,\theta}(t)
 g_{0,\chi}(e^{2\pi i\theta(t+l)})\,.
 \end{equation}
 Since $\{Q_{l,\theta}(\cdot)|\, l\in\Z\}$ are again bounded continuous functions, one
 concludes that
 \begin{equation*}
 Q_{0,\theta}(t)\equiv 1 \quad \text{and}\quad Q_{l,\theta}(t)\equiv 0,\quad \text{for every $l\ne 0$.}
 \end{equation*}
 Hence, for every $t\in\mathbb R$, one has
\begin{equation}\label{2p_zero}
 Q_{0,\theta}(t)=
 P_{-1,\theta}(t)
 g_{1,\chi}(e^{2\pi i\theta(t-1)})-
  P_{0,\theta}(t)
 g_{0,\chi}(e^{2\pi i\theta t})
=1\,,
\end{equation}
and for every $l\ge 1$
\begin{equation}\label{2p_l}
Q_{l,\theta}(t) = P_{l-1,\theta}(t)
 g_{1,\chi}(e^{2\pi i\theta(t+l-1)})-
  P_{l,\theta}(t)
 g_{0,\chi}(e^{2\pi i\theta(t+l)})=0\,.
\end{equation}
Since $\mathsf{U}_\chi(g_0)=\varnothing$,
equations \eqref{2p_l} imply that
\begin{equation*}
P_{l,\theta}(t)=P_{l-1,\theta}(t)\cdot \frac
{g_{1,\chi}(e^{2\pi i\theta(t+l-1)})}{g_{0,\chi}(e^{2\pi i\theta(t+l)})}
\end{equation*}
for every $t$, and hence for each $l\ge 1$ and every $t\in\R$, one has
\begin{equation*}
P_{l,\theta}(t) = P_{0,\theta}(t) \cdot \frac
{g_{1,\chi}(e^{2\pi i\theta t})}{g_{0,\chi}(e^{2\pi i\theta(t+1)})}\cdots \frac
{g_{1,\chi}(e^{2\pi i\theta(t+l-1)})}{g_{0,\chi}(e^{2\pi i\theta(t+l)})}\,,
\end{equation*}
or
\begin{equation*}
P_{l,\theta}(t) = P_{0,\theta}(t) \frac{1}{\psi_{\chi^{-1}}(l,\zeta_t \chi)}\,
\end{equation*}
where $\zeta_t = e^{2\pi i\theta t}$.

 Then since
$\mathsf{U}_\chi(g_0)=\varnothing$, the logarithmic Mahler
measure $\mathfrak m(g_{0,\chi})$ is finite, and hence
$\mathfrak m(g_{1,\chi})>-\infty$. Therefore, $g_{1,\chi}(\eta)$
is not identically $0$ on $\mathbb S^1$,
and since $g_{1,\chi}$ is a polynomial, we
can conclude that  $\mathsf{U}_\chi(g_1)$ is finite.
Therefore, the set of points
\begin{equation*}
 B_1=\Bigl\{ \zeta \in\mathbb S^1:  \zeta e^{2\pi i \theta k}\in
\mathsf{U}_\chi(g_1)\quad\text{for some }k\in\Z\Bigr\}
=\bigcup_{k\in\Z} \textup{R}_\theta^k( \mathsf{U}_\chi(g_1))
\end{equation*}
is at most countable, and hence has Lebesgue measure $0$.

Both functions $\log|g_{0,\chi}(\cdot)|$
and $\log|g_{1,\chi}(\cdot)|$ are integrable. Moreover, the irrational rotation $\textup{R}_\theta\colon \T \longrightarrow \T$ is an ergodic transformation.
 By Birkhoff's
ergodic theorem there exists a set $B_2\subset\mathbb S^1$
of full Lebesgue measure such that for any $\zeta\in B_2$
\begin{equation*}
\aligned
\frac 1n \sum_{k=1}^{n} \log &|g_{0,\chi}(\zeta e^{2\pi i\theta k})|\to \mathfrak m( g_{0,\chi})\,,\\
\frac 1n \sum_{k=0}^{n-1} \log &|g_{1,\chi}(\zeta e^{2\pi i\theta k})|\to \mathfrak m( g_{1,\chi})\,.
\endaligned
\end{equation*}
Therefore, since $\mathfrak m( g_{1,\chi})>\mathfrak m( g_{0,\chi})$, on the set of full measure $B_1^c\cap B_2$
\begin{equation}
\label{2eq:Psi}
   \Psi_n(\zeta) = \frac{1}{\psi_{\chi^{-1}}(n,\zeta \chi)}
\ne 0 \quad \forall n\ge 1,
 \end{equation}
and
\begin{equation}\label{2eq:PsiLim}
\lim_{n\to\infty} \Psi_n(\zeta)= +\infty\,.
\end{equation}
Since $\theta\ne 0$,  the set of points
\begin{equation*}
 C=\Bigl\{t\in\mathbb R\,:\,  e^{2\pi i t\theta} \not\in B_1^c\cap B_2\Bigr\}
\end{equation*}
has full measure, and for every $t\in  C$,
one has that the sum
\begin{equation}
\label{2eq:P01}
\sum_{l\ge 0} |P_{l,\theta}(t)| =|P_{0,\theta}(t)|
+|P_{0,\theta}(t)|\Psi_1(e^{2\pi i t\theta})+
\ldots+
|P_{0,\theta}(t)|\Psi_l(e^{2\pi i t\theta})+\ldots
\end{equation}
is finite if and only if $|P_{0,\theta}(t)|=0$.
Combining this fact with the uniform bound \eqref{2eq:norm1} -- \eqref{2eq:norm2},
we are able to conclude that
\begin{equation}
\label{2eq:P02}
P_{0,\theta}(t) =0
\end{equation}
on a set of full measure in $\R$. The function $P_{0,\theta}(t)$ is continuous and therefore, $P_{0,\theta}$ must be
the identically zero function on $\R$.

Finally, consider the remaining equation \eqref{2p_zero}
 for $Q_{0,\theta}(t)$. Since $P_{0,\theta}(t)\equiv 0$,
one has that there exists a continuous bounded
function $P_{-1,\theta}$ such that
\begin{equation} \label{2eq:P-1}
P_{-1,\theta}(t)
 g_{1,\chi}(e^{2\pi i\theta(t-1)})
= 1 \,,
\end{equation}
for every $t\in \R$. However, since the unitary variety
$\mathsf{U}_{\chi}(g_1)$ is not empty, one can find $t\in \R$
such that
\begin{equation*}
g_{1,\chi}(e^{2\pi i\theta(t-1)})=0\,,
\end{equation*}
and hence, \eqref{2eq:P-1} cannot be satisfied.
 Therefore, we arrived to a contradiction with
the earlier assumption that $\alpha_f$ is expansive.
\end{proof}

The assumption that $\int \phi_\chi d\lambda_{\s} < 0$ cannot be dropped in \ref{2cond:1} of Theorem \ref{2t:oneemptyonenot} as the following simple minded example shows.
\begin{example}
Suppose $\chi$ is not a root of unity  and $\mathsf{U}_\chi(g_1)\not=\varnothing$. Set $g_0(y,z)\equiv K$, where we pick $K \in \N$ such that
\begin{enumerate}
\item  $K> \|g_1(y,z)\|_{\ell^1(\h,\C)}$;
\item $\int \phi_\chi d\lambda_{\s} > 0$.
\end{enumerate}
Then $f=g_1(y,z)x-K$ is invertible since
\begin{equation*}
f= K \left ( \frac{g_1(y,z)x}{K} - 1 \right )\quad \text{and}\quad \left\| \frac{g_1(y,z)x}{K}\right \|_{\ell^1(\h,\C)} < 1 \,.
\end{equation*}
\end{example}

\subsection{The sets $\mathsf{U}(g_0)$ and $\mathsf{U}(g_1)$ are both non-empty}
Let us denote by
\begin{equation*}
\textup{Orb}_{\chi}(\zeta) =\{\zeta\chi^{n} \,:\, n\in\Z \}
\end{equation*} the orbit of $\zeta$ under the circle rotation $\textup{R}_\chi\colon \s\longrightarrow \s$ with $ \zeta \mapsto \zeta\chi$, for every $\zeta\in\s$.
We consider first linear elements $f=g_1(y,z) x-g_0(y,z)$ for which
\begin{enumerate}
	\item $\mathsf{U}(g_0) \not = \varnothing$ and $\mathsf{U}(g_1) \not = \varnothing$;
	\item and there exists an element $(\zeta,\chi) \in \mathsf{U}(g_0)$ with
	\begin{equation*}
	\{ (\eta,\chi)\in \s^2\,:\, \eta\in \textup{Orb}_{\chi}(\zeta)\} \cap \mathsf{U}(g_1) \not = \varnothing\,.
	\end{equation*}
\end{enumerate}

\begin{theorem}\label{2t:linear}
If $f$ is of the form \eqref{2specialform} and for some $m\in \Z$
\begin{equation*}
\mathsf{U}(g_1) \cap \left\{( \xi\chi^m ,\chi)\in\s^2\,:\, (\xi,\chi)\in\mathsf{U}(g_0)\right\} \not = \varnothing\,,
\end{equation*}
then $\alpha_f$ is non-expansive.
\end{theorem}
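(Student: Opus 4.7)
The plan is to produce an irreducible unitary representation $\pi$ of $\h$ together with a non-zero finitely supported vector lying in either $\ker\pi(f)$ or $\ker\pi(f)^*$; in either case $\pi(f)$ is non-invertible, so by the homomorphism property of $\pi$ the element $f$ itself is not invertible in $\ell^1(\h,\C)$, and the expansivity--invertibility equivalence recalled in the introduction forces $\alpha_f$ to be non-expansive.

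Fix $\xi_0\in\s$ and $m\in\Z$ furnished by the hypothesis, so that $g_0(\xi_0,\chi)=0$ and $g_1(\xi_0\chi^m,\chi)=0$. Assume first that $\chi = e^{2\pi i\theta_0}$ is not a root of unity. On the orbit $\{\xi_0\chi^n : n\in\Z\}$ choose any fully supported probability measure $\mu = \sum_n a_n\,\delta_{\xi_0\chi^n}$ with $a_n>0$ and $\sum_n a_n=1$; it is ergodic and $\textup{R}_{\theta_0}$-quasi-invariant, so Lemma~\ref{2lemma:irreducible} yields an irreducible representation $\pi = \pi_{\theta_0,\mu}$. Under the unitary identification $L^2(\T,\mu)\cong\ell^2(\Z,\C)$ via $F\mapsto(F(\xi_0\chi^n)\sqrt{a_n})_n$, the operators become $(\pi(x)c)_n = c_{n+1}$, $(\pi(y)c)_n = \xi_0\chi^n c_n$, $\pi(z)=\chi$, and a short calculation yields
\begin{equation*}
(\pi(f)c)_n \;=\; g_{1,\chi}(\xi_0\chi^n)\,c_{n+1} \;-\; g_{0,\chi}(\xi_0\chi^n)\,c_n\,.
\end{equation*}

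Introduce the finite, non-empty sets $Z_i = \{n\in\Z : g_{i,\chi}(\xi_0\chi^n)=0\}$ for $i=0,1$; by hypothesis $0\in Z_0$ and $m\in Z_1$. The kernel equation for $\pi(f)$ is a first-order linear recurrence with multiplicative transition $g_{0,\chi}/g_{1,\chi}$, and the cokernel equation has the reciprocal transition propagated in the opposite direction; a finitely supported solution therefore arises exactly when we can bracket its support between a zero of one coefficient on the left and a zero of the other on the right. Concretely, if $Z_0\cap Z_1\neq\varnothing$ then the standard basis vector $\delta_{n_0}$ at any common index $n_0$ lies in $\ker\pi(f)^*$ by immediate verification. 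Otherwise, pick two consecutive elements $a<b$ of $Z_0\cup Z_1$ lying in different $Z_i$'s; such a pair must exist since $Z_0,Z_1$ are non-empty and disjoint. In the sub-case $a\in Z_0,\ b\in Z_1$, define $d$ supported on $\{a,\ldots,b\}$ by $d_a\neq 0$ and $d_n = \overline{g_{1,\chi}(\xi_0\chi^{n-1})}/\overline{g_{0,\chi}(\xi_0\chi^n)}\cdot d_{n-1}$; the denominators do not vanish in this range precisely because $(a,b)$ is consecutive in $Z_0\cup Z_1$, and the boundary relations at $n=a$ and $n=b+1$ collapse automatically thanks to $g_{0,\chi}(\xi_0\chi^a)=0$ and $g_{1,\chi}(\xi_0\chi^b)=0$, giving $\pi(f)^*d=0$. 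The reverse sub-case $a\in Z_1,\ b\in Z_0$ produces, by the symmetric forward recurrence $c_{n+1}=g_{0,\chi}(\xi_0\chi^n)/g_{1,\chi}(\xi_0\chi^n)\cdot c_n$ on $\{a+1,\ldots,b\}$, an analogous non-zero element of $\ker\pi(f)$.

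Finally, when $\chi$ is a root of unity of order $q$ the orbit is finite and the analogous finite-dimensional irreducible representation on $\C^q$ is of the form \eqref{2eq:pi_theta_s2}--\eqref{2eq:pi_theta_s22}; the same consecutive-zeros recurrence, now wrapped cyclically around $\Z/q\Z$, again produces a non-trivial kernel vector of $\pi(f)$. The one genuinely subtle step I expect is the selection of the consecutive pair $(a,b)$ of mixed type in $Z_0\cup Z_1$ (the sign of $m$ is what distinguishes the two sub-cases above, and multiple roots of $g_0,g_1$ in the orbit must be handled by shrinking the interval between the nearest mixed zeros): this is precisely the step where the assumption that the zeros of $g_0$ and $g_1$ lie in a common $\chi$-orbit is genuinely used, and once the bracket is in place the verification of $\pi(f)^*d=0$ or $\pi(f)c=0$, and hence the theorem, follow at once.
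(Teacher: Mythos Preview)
Your argument is correct and runs along the same lines as the paper's: both use the representation $\pi^{(\xi,\chi)}$ on $\ell^2(\Z,\C)$ (you reach it via an atomic quasi-invariant orbit measure, the paper writes it down directly as in \eqref{2e:repzetachi}) and analyse the first-order recurrence that $\pi(f)$ induces. The paper argues by contradiction, showing that the image of $\pi^{(\xi,\chi)}(f)$ satisfies a nontrivial linear constraint among finitely many coordinates, so the range is not dense; you construct the dual object directly, a finitely supported vector in $\ker\pi(f)^*$ (or $\ker\pi(f)$), which is a slightly cleaner packaging of the same obstruction. Your consecutive-zeros device also lets you treat both signs of $m$ with a single representation, whereas the paper switches to the reversed shift for $m<0$.

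Two minor points. First, irreducibility of $\pi$ plays no role --- you only use that $\pi$ is a $^\ast$-homomorphism, as you yourself observe --- so the detour through Lemma~\ref{2lemma:irreducible} is superfluous. Second, your assertion that the $Z_i$ are finite can fail if $g_{i,\chi}$ happens to be the zero polynomial; however in that degenerate case $Z_0\cap Z_1\neq\varnothing$ and your first branch (the single basis vector $\delta_{n_0}$) already applies, so the overall logic survives.
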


Although this result could be proved with the help of Stone-von Neumann representations as well, it is more suitable to use monomial representations.
 For the following discussion it is convenient to work with slightly modified versions of the monomial representations defined in
\eqref{2e:inducedstonevonneumann}.
For every $\zeta,\chi \in \s$ let $\pi^{(\zeta,\chi)}$ be the representation of $\h$ acting on $\ell^2(\Z,\C)$ which fulfils
\begin{align}\label{2e:repzetachi}
(\pi^{(\zeta,\chi)}(x)F)(n)=F(n+1)&, \quad (\pi^{(\zeta,\chi)}(y)F)(n)=\zeta \chi^n F(n) \quad \text{and}
\\ &(\pi^{(\zeta,\chi)}(z)F)(n)=\chi F(n)
\end{align}
for each $F\in \ell^2(\Z,\C)$ and $n\in\Z$.

\begin{proof}
Consider the case $m \geq 0$ first.
Suppose $f$ is invertible and hence $\pi(f)$ is invertible for every unitary representations of $\h$ and in particular, $\pi^{(\zeta,\chi)}(f)$ is invertible for every pair $(\zeta,\chi) \in \s^2$.
By the assumptions of the theorem there exists a pair $(\xi,\chi) \in \s^2$ such that
\begin{equation}\label{2eq:choice}
g_1( \xi\chi^m,\chi)=0 \quad\text{and}\quad g_0(\xi,\chi)=0\,.
\end{equation}
Without loss of generality we may assume that
$m$ is the minimal power such that (\ref{2eq:choice})
is satisfied, i.e.,  $g_1( \xi\chi^l,\chi)\not=0$ for each $l\in\Z$ with $0\leq l\leq m-1$.

Suppose $G\in \ell^2(\Z,\C)$ is in the image of $\pi^{(\xi,\chi)}(f)$, then there exists an $F\in \ell^2(\Z,\C)$ such that
\begin{equation}\label{2e:rep}
G(n)=g_1(\xi\chi^n,\chi)F(n+1)-g_0(\xi\chi^n,\chi)F(n)
\end{equation}
holds for every $n\in\Z$. Given the choice of $(\xi,\chi)$
(cf. (\ref{2eq:choice})), one immediately concludes that
\begin{equation*}
G(0)=g_1(\xi,\chi)F(1)\,.
\end{equation*}

If $m=0$, then $G(0)=0$, and we arrive to a contradiction with the assumption that $\pi^{(\xi,\chi)}(f)$ is invertible, and hence has a dense range in
$\ell^2(\Z,\C)$: Indeed, for every $F\in \ell^2(\Z,\C)$ one has that
\begin{equation*}
(\pi^{(\xi,\chi)}(f)F)(0)=0\,
\end{equation*}
and hence, the range of $\pi^{(\xi,\chi)}(f)$ is not
dense.

If $m >0$, then $F$ must satisfy the following system of linear equations
\begin{align} \label{2eq:systemoflinearequtions1}
G(0)&=g_1(\xi,\chi)F(1) \\
\label{2eq:systemoflinearequtions2}
G(l)&=g_1(\xi\chi^l,\chi)F(l+1)-g_0(\xi\chi^l,\chi)F(l), \quad 1 \leq l \leq m-1 \\
\label{2eq:systemoflinearequtions3}
G(m)&=-g_0(\xi\chi^m,\chi)F(m)\,.
\end{align}

We can eliminate $F(1),F(2),\ldots ,F(m)$ in \eqref{2eq:systemoflinearequtions1} -- \eqref{2eq:systemoflinearequtions3} to obtain an expression for $G(m)$ in terms of
$G(0),G(1),\ldots,G(m-1)$. Indeed,
one can easily verify that, for each $0\leq l \leq m-1$, $F(l+1)$ can be written as
\begin{align}
\label{2Fs+l}
F(l+1)&=\frac {G(l)}{g_1(\xi\chi^l,\chi)}+ \frac { g_0(\xi\chi^l,\chi)}{g_1(\xi\chi^l,\chi)}F(l)=\ldots\\
&= \sum_{i=0}^l\frac{G(l-i)}{g_1(\xi\chi^{l},\chi)}\prod_{n=0}^{i-1}\frac{g_0(\xi\chi^{l-n},\chi)}{g_1(\xi\chi^{l-n-1},\chi)}
\\
&=\sum_{i=0}^l\frac{G(l-i)}{g_1(\xi\chi^{l},\chi)} \psi_\chi(i,\xi\chi^l)
\end{align}
(we use the convention that the empty product $\prod_{\varnothing}$  is equal to $1$).
Due to our choice of $m$, $F(l+1)$ in \eqref{2Fs+l}  is  well-defined.
Moreover, since $G(m)=g_0(\xi\chi^m,\chi)F(m)$, one gets
\begin{align*}
G(m)&=-g_0(\xi\chi^{m},\chi) F(m)\\&=
 -\sum_{i=0}^{m-1}G(m-1-i) \psi_\chi(i+1,\xi\chi^{m-1})\,.
\end{align*}

Hence, $G(m)$ depends continuously on the values $G(0),G(1),\ldots,G(m-1)$.
Again, this is a contradiction  with our
hypothesis that $\pi^{(\xi,\chi)}(f)$ has dense range in $\ell^2(\Z,\C)$.

If $m<0$, then we choose $\pi$ such that
\begin{equation*}
(\pi(x)F)(n)=F(n-1) \quad \text{and} \quad(\pi(y)F)(n)=\xi \chi^{-n} F(n)
\end{equation*}
for each $F\in \ell^2(\Z,\C)$ and $n\in\Z$.
Exactly the same arguments as in the case $m \geq 0$ can be used to get a contradiction to the invertibility of $\pi(f)$.
\end{proof}

\begin{corollary}\label{2coro:f'}
Let $f$ be of the form \eqref{2specialform} which satisfies the conditions of Theorem \ref{2t:linear}. Then $\alpha_{f^\diamond }$ defined by $f^\diamond =g_0(y,z)x-g_1(y,z)$ is non-expansive.
\end{corollary}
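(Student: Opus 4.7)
The plan is to reduce the claim directly to Theorem \ref{2t:linear} applied to $f^\diamond$ in place of $f$, by simply reinterpreting the hypothesis after swapping the roles of $g_0$ and $g_1$.

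First, I would unpack the hypothesis: by assumption there exist $\xi,\chi\in\s$ and some $m\in\Z$ such that
\begin{equation*}
g_0(\xi,\chi)=0\quad\text{and}\quad g_1(\xi\chi^m,\chi)=0,
\end{equation*}
i.e. $(\xi,\chi)\in\mathsf{U}(g_0)$ and $(\xi\chi^m,\chi)\in\mathsf{U}(g_1)$. Now $f^\diamond=g_0(y,z)x-g_1(y,z)$ is again of the form \eqref{2specialform} but with the roles of $g_0$ and $g_1$ interchanged, so to apply Theorem \ref{2t:linear} to $f^\diamond$ I need to exhibit $\xi',\chi'\in\s$ and $m'\in\Z$ with $(\xi',\chi')\in\mathsf{U}(g_1)$ and $(\xi'(\chi')^{m'},\chi')\in\mathsf{U}(g_0)$.

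The natural choice is $\xi'\coloneqq \xi\chi^m$, $\chi'\coloneqq \chi$, and $m'\coloneqq -m$. Then $(\xi',\chi')=(\xi\chi^m,\chi)\in\mathsf{U}(g_1)$ by the second condition, and
\begin{equation*}
\bigl(\xi'(\chi')^{m'},\chi'\bigr)=\bigl(\xi\chi^m\cdot\chi^{-m},\chi\bigr)=(\xi,\chi)\in\mathsf{U}(g_0)
\end{equation*}
by the first. Hence $f^\diamond$ satisfies the hypothesis of Theorem \ref{2t:linear}, and $\alpha_{f^\diamond}$ is non-expansive.

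There is no real obstacle here: the corollary is merely expressing the evident symmetry of the condition in Theorem \ref{2t:linear} under exchanging $g_0\leftrightarrow g_1$ and replacing $m$ by $-m$. The only point worth double-checking is the sign convention for $m$ (which corresponds to the two cases $m\geq 0$ and $m<0$ treated separately in the proof of Theorem \ref{2t:linear}), but this is already handled inside that theorem's proof, so no additional argument is required.
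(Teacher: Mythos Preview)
Your proof is correct and follows essentially the same approach as the paper: both reduce the corollary to an application of Theorem \ref{2t:linear} to $f^\diamond$ by exploiting the symmetry of the intersection condition under swapping $g_0$ and $g_1$. You are simply more explicit than the paper in identifying the new shift parameter as $m'=-m$, whereas the paper just asserts the existence of some $k\in\Z$.
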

\begin{proof}
Since
\begin{equation*}
\mathsf{U}(g_1) \cap \left\{( \xi\chi^m ,\chi)\in\s^2\,:\, (\xi,\chi)\in\mathsf{U}(g_0)\right\} \not = \varnothing\,,
\end{equation*}
there exists a $k\in\Z$ such that
\begin{equation*}
\mathsf{U}(g_0) \cap \left\{( \xi\chi^k ,\chi)\in\s^2\,:\, (\xi,\chi)\in\mathsf{U}(g_1)\right\}
\end{equation*}
 is non-empty as well. Hence, Theorem \ref{2t:linear} guarantees the non-expansiveness of $\alpha_{f^\diamond}$.
\end{proof}

\begin{example}
Consider
\begin{equation*}g_1(y,z)=1-y-y^{-1}-z-z^{-1} \quad \text{and} \quad g_0(y,z)=3-y-y^{-1}-z-z^{-1}\,.\end{equation*}
We will show that the dynamical systems $(X_f,\alpha_f)$ and $(X_{f^\diamond},\alpha_{f^\diamond})$, with $f=g_1(y,z)x -g_0(y,z)$ and $f^\diamond =g_0(y,z)x -g_1(y,z)$,
are non-expansive.

For this purpose we introduce, for $m\in \Z$, the `$m$-sheared version' of $g_0(y,z)$ given by
\begin{equation*}
g_0^{(m)}(y,z)=g_0(yz^m,z)= 3- y z^m-y^{-1} z^{-m}- z-z^{-1}\,;
\end{equation*}
and note that
\begin{equation*}
\mathsf{U}(g_1) \cap \left\{( \xi\chi^m ,\chi)\in\s^2\,:\, (\xi,\chi)\in\mathsf{U}(g_0)\right\} \not = \varnothing \iff \mathsf{U}(g_1) \cap \mathsf{U}(g_0^{(m)})\not = \varnothing\,.
\end{equation*}

The Fourier transforms of $g_1(y,z)$ and $g^{(m)}_0(y,z)$ are given by the functions
\begin{align*}
(\mathcal F g_1)(s,t)&= 1-2 \cos(2 \pi  s) - 2 \cos(2 \pi  t) \quad \text{and}\\ \left(\mathcal F g_0^{(m)} \right) (s,t)&= 3- 2 \cos(2 \pi  (s+mt))-2 \cos(2 \pi  t) \,,
\end{align*}
respectively.

\begin{figure}[tbph]
\begin{center}
\includegraphics[width=0.70\textwidth]{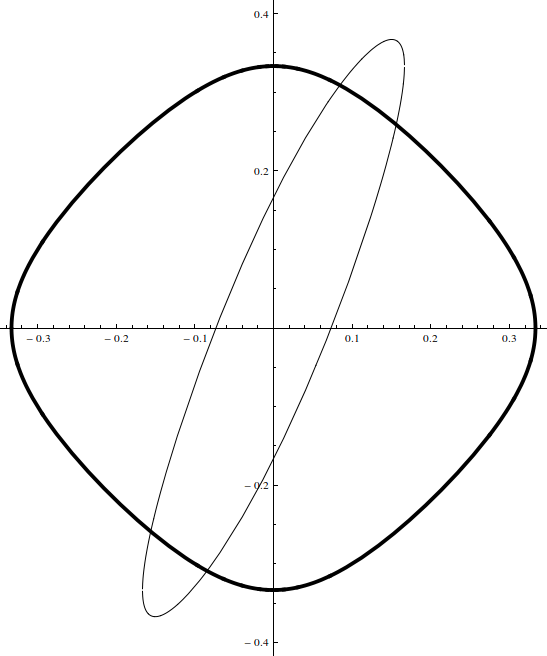}
\end{center}

\caption[.] {In this Figure the curves corresponding to the solution sets $K$ (thick line) and $K[2]$ (thin line) are plotted.}
\label{2f:1}
\end{figure}
Let
\begin{align*}
K &= \{(s,t)\in \T^2 \,:\, (\mathcal F g_1)(s,t)=0\} \quad \text{and}\\ K[m]&=\left\{(s,t)\in \T^2 \,:\, \left(\mathcal F g_0^{(m)} \right)(s,t)=0\right\}\,.
\end{align*}
Fix $m\in \Z$. By solving the equations
\begin{equation*}
	(\mathcal F g_1)(s,t)=0 \quad \text{and} \quad  \left(\mathcal F g_0^{(m)} \right)(s',t')=0
\end{equation*}
for $s$ and $s'$ we get curves $s(t)$ and $s'(t')$ corresponding to the solution sets $K$ and $K[m]$.
If these curves intersect, then $K$ and $K[m]$ have a non-empty intersection.
It is clear that $(s,t) \in K$ if and only if $(e^{2 \pi i s},e^{2 \pi i t})\in\mathsf{U}(g_1)$. For every $m\in\Z$ the sets $K[m]$ and $\mathsf{U}(g_0^{(m)})$ are related in the same way.
The sets $K$ and $K[2]$ have a non-empty intersection as  Figure \ref{2f:1} shows; while $K \cap K[0] = \varnothing$  and $K \cap K[1] = \varnothing$.

Since the conditions of Theorem \ref{2t:linear} and Corollary \ref{2coro:f'} are satisfied,
$f$ and $f^\diamond$ are not invertible.

\end{example}

The next result can be  easily deduced from the proof of Theorem \ref{2t:oneemptyonenot}.

\begin{theorem}
Let $f\in\Z[\h]$ be of the form \eqref{2specialform}. Suppose there exists an element $\chi\in\s$ of infinite order such that the following conditions are satisfied
\begin{equation} \label{2e:cond:1}
\mathsf{U}_\chi(g_0)\not =\varnothing \quad \text{and} \quad \mathsf{U}_\chi(g_1)\not=\varnothing \,,
\end{equation}
and
\begin{equation*}
\mathfrak{m} (g_{0,\chi}) \not = \mathfrak{m} (g_{1,\chi})\,.
\end{equation*}
Then $\alpha_f$ is non-expansive.
\end{theorem}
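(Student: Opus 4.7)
The plan is to adapt the proof of Theorem \ref{2t:oneemptyonenot} essentially verbatim, the one new ingredient being that the recursive equations must be iterated on a full-measure set of parameters in order to bypass the (now non-empty) zero sets of \emph{both} $g_{0,\chi}$ and $g_{1,\chi}$. Writing $\chi=e^{2\pi i\theta}$ with $\theta$ irrational (which is forced by $\chi$ having infinite order), I assume $\alpha_f$ is expansive, so $f$ is invertible in $\lh$ with inverse $w$, and pass to the Stone--von Neumann representation $\pi_{1,\theta}$ on $L^2(\R,\C)$. Expanding $\pi_{1,\theta}(w)=\sum_{l\in\Z}P_{l,\theta}(t)\,\textup{T}_1^l$ with continuous bounded coefficients satisfying $\sum_l\sup_t|P_{l,\theta}(t)|\le\|w\|_{\lh}<\infty$, the identity $wf=1_{\lh}$ yields the same system of equations as in the cited proof, namely $Q_{0,\theta}(t)\equiv 1$ and $Q_{l,\theta}(t)\equiv 0$ for $l\ne 0$.

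Suppose first that $\mathfrak m(g_{1,\chi})>\mathfrak m(g_{0,\chi})$. The degenerate situation where $g_{0,\chi}\equiv 0$ or $g_{1,\chi}\equiv 0$ collapses the recursion immediately and already contradicts $Q_{0,\theta}\equiv 1$ at any zero of the surviving factor, so I may assume that both are non-zero Laurent polynomials with finite zero sets $Z_0,Z_1\subset\s$. Since $\chi$ has infinite order, the union $B=\bigcup_{k\in\Z}\textup{R}_\chi^k(Z_0\cup Z_1)$ is countable, so the set $E=\{t\in\R:e^{2\pi i\theta t}\notin B\}$ has full Lebesgue measure. For $t\in E$ the $l\ge 1$ equations may be solved forward, giving $P_{l,\theta}(t)=P_{0,\theta}(t)\prod_{k=0}^{l-1}g_{1,\chi}(e^{2\pi i\theta(t+k)})/g_{0,\chi}(e^{2\pi i\theta(t+k+1)})$. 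Birkhoff's ergodic theorem, applied to the ergodic rotation $\textup{R}_\theta$ on $(\s,\lambda_{\s})$ with the $L^1$ functions $\log|g_{0,\chi}|$ and $\log|g_{1,\chi}|$, shows that on a further full-measure set of $t$ this product has exponential growth rate $\mathfrak m(g_{1,\chi})-\mathfrak m(g_{0,\chi})>0$. The uniform summability bound then forces $P_{0,\theta}(t)=0$ almost everywhere, and by continuity $P_{0,\theta}\equiv 0$. Substituting into $Q_{0,\theta}\equiv 1$ leaves $P_{-1,\theta}(t)\,g_{1,\chi}(e^{2\pi i\theta(t-1)})\equiv 1$, which is impossible at any $t_0$ with $e^{2\pi i\theta(t_0-1)}\in\mathsf{U}_\chi(g_1)\ne\varnothing$.

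The case $\mathfrak m(g_{0,\chi})>\mathfrak m(g_{1,\chi})$ is the mirror image: I iterate the $l\le -1$ equations in the opposite direction on the same set $E$, obtaining $P_{-k,\theta}(t)=P_{-1,\theta}(t)\prod_{j=1}^{k-1}g_{0,\chi}(e^{2\pi i\theta(t-j)})/g_{1,\chi}(e^{2\pi i\theta(t-j-1)})$; Birkhoff again forces $P_{-1,\theta}\equiv 0$, cascading through the recursion to $P_{l,\theta}\equiv 0$ for all $l\le -1$, and the surviving $Q_{0,\theta}\equiv 1$ becomes $-P_{0,\theta}(t)\,g_{0,\chi}(e^{2\pi i\theta t})\equiv 1$, which is contradicted by any element of $\mathsf{U}_\chi(g_0)\ne\varnothing$. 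The only conceptual novelty beyond Theorem \ref{2t:oneemptyonenot} is the orbit-avoidance construction of $E$, which is needed precisely because the recursion can no longer be inverted at the (finitely many) zeros of $g_{0,\chi}$ and $g_{1,\chi}$; once this is in place the ergodic-theoretic skeleton of the earlier proof applies with no further modification.
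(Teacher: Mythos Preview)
Your proof is correct and follows essentially the same route as the paper: both argue by adapting the proof of Theorem \ref{2t:oneemptyonenot}, the one new ingredient being the removal of the countable $\textup{R}_\chi$-orbit of the zeros (the paper phrases this as ``the only adaption one has to make is to take the countable set $B$ into consideration''). Your treatment of the degenerate cases $g_{i,\chi}\equiv 0$ within the Stone--von Neumann framework and your explicit backward iteration for the case $\mathfrak m(g_{0,\chi})>\mathfrak m(g_{1,\chi})$ are minor cosmetic variations on the paper's presentation, which handles the degenerate cases via the monomial representations $\pi^{(\zeta,\chi)}$ and dismisses the second inequality as ``analogous''.
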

\begin{proof}
Suppose \eqref{2e:cond:1} holds.
Let us first treat the trivial cases.

If $g_{0,\chi}(y)$ is the zero-element in $\C [\Z]$, then for every $\zeta\in \s$
\begin{equation*}
	\pi^{(\zeta,\chi)}(f)= \pi^{(\zeta,\chi)}(g_1(y,z)x)\,.
\end{equation*}
Fix $\xi \in \mathsf{U}_\chi(g_1)$, which is a non-empty set by the assumptions of the theorem.
Since one has $(\pi^{(\xi,\chi)}(f)F)(0)$ is equal to $0$ for every $F\in \ell^2(\Z,\C)$, $0$ is an element of $\sigma (\pi^{(\xi,\chi)}(f))$ and hence $f$ is not  invertible. The same conclusions can be drawn for the cases $g_{1,\chi}= 0_{\C[\Z]}$ and $g_{0,\chi}= g_{1,\chi}= 0_{\C[\Z]}$.

Next consider the case where $g_{0,\chi}$ and $g_{1,\chi}$ are not the zero elements in $\C[\Z]$, which implies that $\mathfrak{m} (g_{0,\chi})$  and  $\mathfrak{m} (g_{1,\chi})$ are finite and moreover $\mathsf{U}_\chi(g_0)$ and $\mathsf{U}_\chi(g_1)$ are finite sets.
Suppose that $\mathfrak{m} (g_{0,\chi})< \mathfrak{m} (g_{1,\chi})$. We follow the line of arguments in the proof of Theorem \ref{2t:oneemptyonenot}.
The only adaption one has to make is to take  the countable set
\begin{equation*}
B=\Bigl\{ t\in\R\,:\,  e^{2 \pi i t} \chi^{k}\in
\mathsf{U}_\chi(g_0)\,\,\text{for some }k\in\Z\Bigr\}
\end{equation*}
into consideration, i.e., to exclude points in $\mathcal B$ in the equations \eqref{2eq:Psi} -- \eqref{2eq:P02}.

The case $\mathfrak{m} (g_{0,\chi}) > \mathfrak{m} (g_{1,\chi})$ can be proved analogously.
\end{proof}



\end{document}